\documentclass[reqno]{amsart}
\usepackage{hyperref,xcolor,amssymb}

\DeclareMathOperator*{\esssup}{ess\,sup}

\DeclareMathOperator*{\essinf}{ess\,inf}

\newcommand{\osc}{\mathrm{osc}}

\def\Xint#1{\mathchoice
{\XXint\displaystyle\textstyle{#1}}%
{\XXint\textstyle\scriptstyle{#1}}%
{\XXint\scriptstyle\scriptscriptstyle{#1}}%
{\XXint\scriptscriptstyle\scriptscriptstyle{#1}}%
\!\int}
\def\XXint#1#2#3{{\setbox0=\hbox{$#1{#2#3}{\int}$ }
\vcenter{\hbox{$#2#3$ }}\kern-.6\wd0}}

\def\dashint{\Xint-}

\begin{document}
\title[Quasiminimizers of a $(p,q)$-Dirichlet integral]
{Regularity properties for quasiminimizers of a $(p,q)$-Dirichlet integral}

\author[A. Nastasi, C. Pacchiano Camacho]
{Antonella Nastasi, Cintia Pacchiano Camacho}

\address{Antonella Nastasi \newline
University of Palermo, Department of Mathematics and Computer Science, Via Archirafi 34, 90123, Palermo, Italy}
\email{antonella.nastasi@unipa.it, antonellanastasi.math@gmail.com}

\address{Cintia Pacchiano Camacho\newline
	Aalto University, Department of Mathematics and Systems Analysis, Espoo, Finland}
\email{cintia.pacchiano@aalto.fi}

\subjclass[2010]{primary 31E05; secondary 30L99, 46E35}
\keywords{$(p,q)$-Laplace operator; measure metric spaces; minimal $p$-weak upper gradient; minimizer.}

\begin{abstract}
Using a variational approach we study interior regularity for quasiminimizers of a $(p,q)$-Dirichlet integral, as well as regularity results up to the boundary, in the setting of a metric space equipped with a doubling measure and supporting a Poincar\'{e} inequality. 
For the interior regularity, we use De Giorgi type conditions to show that quasiminimizers are locally H\"{o}lder continuous and they satisfy Harnack inequality, the strong maximum principle and Liouville's Theorem. Furthermore, we give a pointwise estimate near a boundary point, as well as a sufficient condition for H\"older continuity and a Wiener type regularity condition for continuity up to the boundary.
Finally, we consider $(p,q)$-minimizers  and we give an estimate for their oscillation at boundary points.
\end{abstract}

\maketitle
\numberwithin{equation}{section}
\newtheorem{theorem}{Theorem}[section]
\newtheorem{lemma}[theorem]{Lemma}
\newtheorem{proposition}[theorem]{Proposition}
\newtheorem{remark}[theorem]{Remark}
\newtheorem{definition}[theorem]{Definition}
\newtheorem{corollary}[theorem]{Corollary}
\allowdisplaybreaks

\section{Introduction}
The aim of this paper is to study quasiminimizers of the following anisotropic energy $(p,q)$-Dirichlet integral
\begin{equation}\label{J}
\int_{\Omega}  a g_{u}^p d \mu + \int_{\Omega}b g_{u}^q d \mu,
\end{equation}
in metric measure spaces, with $g_u$ the minimal $q$-weak upper gradient of $u$. Here, $\Omega\subset X$ is an open bounded set, where $(X, d, \mu)$ is a complete metric measure space with metric $d$ and a doubling Borel regular measure $\mu$, supporting a weak $(1,p)$-Poincar\'{e} inequality for $1<p<q$. We consider some coefficient functions $a$ and $b$ to be measurable and satisfying $0<\alpha\leq a,b\leq \beta$, for some positive constants $\alpha$, $\beta$.

One of the advantages of working in the general setting is that variational methods, such as those based on De Giorgi classes \cite{D}, are still available (see \cite{GG}). In \cite{GG2}, Giaquinta and Giusti introduced the notion of quasiminimizers in $\mathbb{R}^{N}$. They proved several of their fundamental properties such as local H\"older continuity and the strong maximum principle. We also mention the paper by DiBenedetto and Trudinger \cite{DiBT}, where they proved the Harnack inequality for quasiminimizers. The study of calculus on metric measure spaces has attracted a lot of attention in recent years, it has been proven that Sobolev spaces can be defined without the notion of partial derivatives, (see \cite{BB, BBS, C, HKS, N, S}). This theory can be applied in several areas of analysis, for example, calculus on Riemannian manifolds, subelliptic operators associated with vector fields, potential theory on graphs and weighted Sobolev spaces. 

Since in  a metric measure space it is possible to define (quasi)minimizers of Dirichlet integrals, this appropach is particularly useful. Local properties of quasiminimizers of the $p$-energy integral on metric spaces were studied by Kinnunen-Shanmugalingam in \cite{KS}. More precisely, they discussed regularity properties. They used the De Giorgi method to prove that, if the metric measure space is equipped with a doubling measure and it supports a Poincar\'{e} inequality, quasiminimizers of the $p$-energy functional are locally H\"{o}lder continuous, they satisfy the Harnack inequality and the maximum principle.

For our work, we study the $(p,q)$-Dirichlet integral \eqref{J} on a metric space equipped with a doubling measure and supporting a Poincar\'{e} inequality as in \cite{KS}, but, as we have already anticipated, the new feature is that we include both $p$-Laplace and $q$-Laplace operators, involving also some measurable coefficient functions $a$ and $b$, only assuming they are bounded away from $0$ and $\infty$. This condition over the coefficients is essential for our approach, however it is an open question if it could be relaxed. By adapting the approach used in \cite{KS}, we establish the local boundedness for quasiminimizers of the convex integral \eqref{J}. More precisely, we show that quasiminimizers satisfy a De Giorgi type inequality \cite{D} and some boundedness properties, thus extending the corresponding results in \cite{KS}. In addition, we study regularity up to the boundary. We give a comparison principle for $(p,q)$-minimizers. As a consequence, we get an estimate for the oscillation of $(p,q)$-minimizers at boundary points. This result extends the work in \cite{BMS} since this only concerns $p$-minimizers. We note that some qualitative results would follow from the theory of quasiminimizers by Kinnunen and Shanmugalingam \cite{KS}, however the novelty here is that we are interested not only in qualitative results but also in quantitative estimates with an explicit analysis on the dependencies of the constants. In our approach the constants will depend on the upper and lower bounds $\alpha$ and $\beta$, as well as on the $(p,q)$-quasiminimizer constant $K$.

Recently, many authors have explored different generalizations of classical elliptic and parabolic partial differential equations, such as the nonlinear $p$ and $(p,q)$-Laplace equations, (see, for example \cite{BDM,BDMS2,CM1,CM2,Maz}). However, there are still new and interesting open mathematical questions in the setting of anisotropic nonlinear elliptic and parabolic partial differential equations driven by $(p,q)$-Laplace operators.
In general, anisotropic partial differential equations have received a large interest due to their applications to double variational energies and anisotropic energies in integral form. 
There exists a rich literature concerning regularity results for solutions to partial differential equations, both elliptic and parabolic, under $p$ and $(p,q)$-growth conditions in the Euclidean setting. For more information regarding other $(p,q)$-growth cases, we refer to the papers of Baroni, Colombo and Mingione \cite{BCM}, Cupini, Marcellini and Mascolo \cite{CMM1, CMM2}, De Filippis and Oh \cite{DO}, Hadzhy, Skrypnik and Voitovych \cite{HSV}, Mingione and R\v{a}dulescu \cite{MR}, Marcellini \cite{Ma1, Ma2, Ma3}. In our study, we focus on the anisotropic energy integral as presented by Marcellini in \cite{Ma0}, but the new feature is that we work in metric measure spaces. We have considered this more general setting to prove that the special case of $(p,q)$-growth condition \eqref{J} can be treated also in a very general context, thus obtaining several relevant properties for (quasi)minimizers even in a metric framework.

Furthermore, there are also some existing regularity results concerning the boundary behaviour for (quasi)minimizers, both in the Euclidean (see \cite{DiG,T,Z}) and in the metric setting (see \cite{B, BJ}). More specifically, Ziemer \cite{Z} proved a Wiener type condition for the continuity of a quasiminimum at a boundary point of a bounded open subset of $\mathbb{R}^N$. On the other hand, Bj\"{o}rn \cite{B} extended these results to the general metric setting and also gave sufficient condition for H\"{o}lder continuity. Another important contribution concerning boundary behaviour can be found in \cite{BMS}. In this paper, Bj\"{o}rn, MacManus and Shanmugalingam obtained an estimate for the oscillation of $p$-harmonic functions and $p$-energy minimizers near a boundary point. However, the study of boundary behavior for the $(p,q)$-problems can be considered mostly still open, at least in its full generality.

The present work is divided as follows.  Section \ref{Sec2} constitutes the mathematical background. There we give definitions and useful notation needed throughout the paper. In Section \ref{quasimin} we introduce $(p,q)$-(quasi)minimizers and prove uniqueness properties for $(p,q)$-minimizers. Then, in Section \ref{Sec3}, we obtain that $(p,q)$-quasiminimizers satisfy a De Giorgi type inequality. 
In the following two sections local H\"older continuity (Section \ref{Sec4}) and Harnack's inequality are studied (Section \ref{Sec5}). As a consequence, we obtain a strong maximum principle and prove Liouville's Theorem.
Furthermore, in Sections \ref{Sec6} and \ref{Sec7}, we focus on regularity results up to the boundary. In particular, in Section \ref{Sec6}, we study H\"older continuity at a boundary point and give a Wiener type regularity condition for continuity at the boundary for $(p,q)$-quasiminimizers. Finally, in Section \ref{Sec7}, we prove an estimate which gives us control over the oscillation of $(p,q)$-minimizers at boundary points.
\section*{Acknowledgements}
The authors wish to thank Professor Juha Kinnunen for supporting us in our research and for all the enlightening discussions and advice. Special thanks go to Professor Paolo Marcellini for the panoramical view he gently provided on the existing literature and open questions on the subject. The authors are grateful to the referees for their careful reading and the useful comments.
The second author was supported by a doctoral training grant for 2021 from the V\"ais\"al\"a Fund.

\section{Mathematical background}\label{Sec2}

Let $(X, d, \mu)$ be a complete metric measure space, where $\mu$ is a Borel regular measure with $\mu(\Omega)>0$ for every $\Omega\subset X$ non empty open set and $\mu(B)<+\infty$ for every $B\subset X$ bounded set. Let $B(y,\rho)\subset X$ be a ball with the center $y \in X$ and the radius $\rho>0$. 
 We denote $$u_S= \dfrac{1}{\mu(S)}\int_{S} u \, d\mu=\dashint_{S}u\, d\mu,$$where $S \subset X$ is a measurable set of finite positive measure and $u: S \to \mathbb{R}$ is a measurable function. Throughout this paper, we will indicate with $C$ all positive constants, even if they assume different values, unless otherwise specified.
 
\begin{definition}[\cite{BB}, Section 3.1]
	A measure $\mu$ on $X$ is said to be doubling if there exists a constant $C_d \geq 1$, called the doubling constant, such that 
	\begin{equation}\label{doubling}
	0<\mu(B(y,2\rho))\leq C_d \, \mu(B(y,\rho))< +\infty,
	\end{equation} for all $y \in X$ and $\rho>0$.
\end{definition}
\begin{lemma}[\cite{BB}, Lemma 3.3]\label{lemm3.3}
	Let $(X, d, \mu)$ be a metric measure space with $\mu$ doubling. Then there is $Q>0$ such that
	\begin{equation}\label{s}
	\dfrac{\mu(B(y,\rho))}{\mu(B(x, R))}\geq C\left(\dfrac{\rho}{R}\right)^Q
	\end{equation}
	for all $\rho\in ]0, R]$, $x \in {\Omega}$, $y \in B(x, R)$, where constants $Q$ and $C$ depend only on $C_d$.
\end{lemma}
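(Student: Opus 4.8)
The plan is to iterate the doubling condition \eqref{doubling} a controlled number of times and then convert the resulting geometric factor into a power of $R/\rho$. First I would reduce the comparison of two arbitrary balls to the comparison of two \emph{concentric} ones: since $y\in B(x,R)$, the triangle inequality gives $B(x,R)\subset B(y,2R)$, and hence $\mu(B(x,R))\le\mu(B(y,2R))$. It therefore suffices to estimate $\mu(B(y,2R))$ from above in terms of $\mu(B(y,\rho))$.

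Next I would choose $k$ to be the smallest nonnegative integer with $2^{k}\rho\ge 2R$, namely $k=\lceil\log_2(2R/\rho)\rceil$; this is well defined (indeed $k\ge 1$) because $\rho\le R$. Applying \eqref{doubling} successively $k$ times at the center $y$ yields
\begin{equation*}
\mu(B(y,2R))\le\mu(B(y,2^{k}\rho))\le C_d^{\,k}\,\mu(B(y,\rho)).
\end{equation*}

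It then remains to bound $C_d^{\,k}$. From $k\le\log_2(2R/\rho)+1=\log_2(4R/\rho)$ and $C_d\ge 1$ one gets, setting $Q=\log_2 C_d$,
\begin{equation*}
C_d^{\,k}\le C_d^{\,\log_2(4R/\rho)}=\left(\frac{4R}{\rho}\right)^{Q},
\end{equation*}
so that $\mu(B(x,R))\le 4^{Q}(R/\rho)^{Q}\mu(B(y,\rho))$, which is exactly \eqref{s} with $C=4^{-Q}$. Both $Q$ and $C$ depend only on the doubling constant $C_d$, as claimed.

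There is no genuine obstacle here; the only step that requires a little care is the recentering at the beginning, since the larger ball $B(x,R)$ is not assumed to be centered at $y$. The cost of passing to a ball centered at $y$ is merely the replacement of the factor $2$ by $4$ inside the power, which does not affect the dependence of the constants. The whole argument uses nothing beyond the metric axioms and the inequality \eqref{doubling}.
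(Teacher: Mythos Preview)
Your argument is correct and is the standard proof of this fact; the paper does not supply its own proof but merely cites the result from \cite{BB}, so there is nothing to compare against. One very small remark: your choice $Q=\log_2 C_d$ gives $Q=0$ in the borderline case $C_d=1$, whereas the statement asks for $Q>0$; since $(\rho/R)^Q$ is nonincreasing in $Q$ for $\rho\le R$, you may simply replace $Q$ by any larger positive number (cf.\ Remark~\ref{rem2poin}) without affecting the estimate or the dependence of the constants.
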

Now, we introduce the concept of upper gradient following the notations in the book by  Bj\"{o}rn and Bj\"{o}rn \cite{BB}.
\begin{definition}[\cite{BB}, Definition 1.13]
A non negative Borel measurable function $g$ is said to be an upper gradient of function $u: X \to [-\infty, +\infty]$ if, for all compact rectifiable arc lenght parametrized paths $\gamma$ connecting $x$ and $y$, we have
\begin{equation}\label{ug}
|u(x)-u(y)|\leq \int_{\gamma}g\, ds
\end{equation}
 whenever $u(x)$ and $u(y)$ are both finite and $\int_{\gamma}g \, ds= +\infty$ otherwise.
\end{definition}
The notion of upper gradient has been introduced to overcome the lack of a differentiable structure in metric measure spaces. 
From this last definition, we note that  we don't have uniqueness of the upper gradient.
Indeed, if $g$ is an upper gradient of function $u$ and $\phi$ is any non negative Borel measurable function, then $g+\phi$ is still an upper gradient of $u$. 
As a consequence, we need the concept of $q$-weak upper gradient. If $g$ fulfills \eqref{ug} for $q$-almost all paths, meaning that the family of non constant
paths for which \eqref{ug} fails is of zero $q$-modulus (see \cite{BB}, Definition 1.33), then $g$ is called $q$-weak upper gradient of $u$. 

The following theorem states the existence of a minimal element for the family of $q$-weak upper gradients of $u$, which is $\mu$-a.e. uniquely determinated.
\begin{theorem}[\cite{BB}, Theorem 2.5]\label{mpwug}
Let $q \in ]1, +\infty[$. Suppose that $u\in L^q(X)$ has an $L^q(X)$ integrable $q$-weak upper gradient. Then there exists a $q$-weak upper gradient, denoted with $g_u$, such that $g_u\leq g$ $\mu$-a.e. in $X$, for each $q$-weak upper gradient $g$ of $u$. This $g_u$ is called the minimal $q$-weak upper gradient of $u$.
\end{theorem}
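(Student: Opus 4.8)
The plan is to produce $g_u$ as the $\mu$-a.e.\ limit of a suitably chosen minimizing sequence inside the cone $\mathcal G$ of all $L^q(X)$-integrable $q$-weak upper gradients of $u$, which is nonempty by hypothesis, and then to upgrade minimality of its $L^q$-norm to pointwise $\mu$-a.e.\ minimality against \emph{every} $q$-weak upper gradient. The first ingredient I would establish is a lattice property: if $g_1,g_2$ are $q$-weak upper gradients of $u$, then so is $\min\{g_1,g_2\}$. Since $u$ has at least one $L^q$-integrable $q$-weak upper gradient, $u\circ\gamma$ is absolutely continuous for $q$-a.e.\ rectifiable path $\gamma$, and along such $\gamma$ one has $|(u\circ\gamma)'|\le (g_i\circ\gamma)\,|\gamma'|$ a.e.\ for $i=1,2$; hence $|(u\circ\gamma)'|\le (\min\{g_1,g_2\}\circ\gamma)\,|\gamma'|$ a.e., and integrating over $\gamma$ recovers the upper gradient inequality \eqref{ug} for $\min\{g_1,g_2\}$.

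Next I would set $\lambda:=\inf_{g\in\mathcal G}\|g\|_{L^q(X)}<\infty$, choose $g_j\in\mathcal G$ with $\|g_j\|_{L^q}\to\lambda$, and use the lattice property to replace $g_j$ by $h_j:=\min\{g_1,\dots,g_j\}\in\mathcal G$, a decreasing sequence with $\lambda\le\|h_j\|_{L^q}\le\|g_j\|_{L^q}\to\lambda$. Then $h_j$ decreases $\mu$-a.e.\ to a nonnegative function $g_u$, and by dominated convergence (dominated by $h_1\in L^q$) one gets $h_j\to g_u$ in $L^q(X)$ and $\|g_u\|_{L^q}=\lambda$. To see that $g_u$ is itself a $q$-weak upper gradient of $u$, I would invoke Fuglede's lemma: passing to a subsequence, $\int_\gamma|h_{j_k}-g_u|\,ds\to 0$ for $q$-a.e.\ $\gamma$, so letting $k\to\infty$ in $|u(x)-u(y)|\le\int_\gamma h_{j_k}\,ds$ yields $|u(x)-u(y)|\le\int_\gamma g_u\,ds$ for $q$-a.e.\ $\gamma$. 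Thus $g_u\in\mathcal G$ realizes the infimum $\lambda$.

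Finally, for pointwise minimality, let $g$ be an arbitrary $q$-weak upper gradient of $u$. Then $\min\{g_u,g\}\le g_u\in L^q(X)$, so $\min\{g_u,g\}$ is $L^q$-integrable, and by the argument of the lattice step it is again a $q$-weak upper gradient of $u$, hence lies in $\mathcal G$; therefore $\|\min\{g_u,g\}\|_{L^q}\ge\lambda=\|g_u\|_{L^q}$, which is only possible if $\min\{g_u,g\}=g_u$ $\mu$-a.e., i.e.\ $g_u\le g$ $\mu$-a.e. The $\mu$-a.e.\ uniqueness of such a minimal element is then immediate by comparing two candidates.

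The main obstacle is carrying out rigorously the two passages from statements valid ``for $q$-a.e.\ path'' to statements valid ``$\mu$-a.e.\ on $X$'': the lattice property and the $L^q$-closure of $\mathcal G$. Both rest on the fact that possessing one $L^q$-integrable $q$-weak upper gradient already makes $u$ absolutely continuous along $q$-a.e.\ curve, together with Fuglede's lemma linking $L^q$-convergence to convergence of path integrals for $q$-a.e.\ curve; one must also check that the countably many exceptional path families (one for each $g_j$, plus those coming from the lattice and Fuglede steps) have zero $q$-modulus and that their union can be discarded, which is where the countable subadditivity of the $q$-modulus is used.
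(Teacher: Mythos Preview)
The paper does not supply its own proof of this statement: it is quoted verbatim as Theorem~2.5 of \cite{BB} in the preliminaries section and used as a black box throughout. So there is nothing in the present paper to compare your argument against.

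That said, your proposal is the standard and correct route to the result (essentially the argument one finds in \cite{BB} or \cite{HKS}): closure of the cone $\mathcal G$ under pointwise minima, a monotone minimizing sequence obtained by iterated minima, $L^q$-closure of $\mathcal G$ via Fuglede's lemma, and then the lattice trick $\min\{g_u,g\}\in\mathcal G$ to pass from norm-minimality to pointwise $\mu$-a.e.\ minimality, including against non-$L^q$ weak upper gradients $g$ since $\min\{g_u,g\}\le g_u\in L^q$. Your closing paragraph correctly identifies the only delicate points (absolute continuity along $q$-a.e.\ curve, Fuglede, countable subadditivity of the $q$-modulus), and none of them is a genuine gap. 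An alternative, slightly slicker packaging is to observe that $\mathcal G$ is a closed convex subset of the uniformly convex space $L^q(X)$ and hence has a unique element of minimal norm, bypassing the monotone-sequence construction; but one still needs the lattice property for the final pointwise comparison, so the two approaches are essentially equivalent.
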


\noindent We remark that $g_u$ of Theorem \ref{mpwug} is a generalization of the Euclidean modulus of the gradient of $u$ to the metric case. In general, the upper gradients of a function do not necessarily give us a control over it. In order to gain such control one standard hypothesis when working in the metric setting is to assume that the space supports a Poincar\'{e} inequality.

\begin{definition}[\cite{BB}, Definition 4.1]\label{PI}
	Let $s \in [1, +\infty[$. 
	A metric measure space $X$ supports a weak $(1, s)$-Poincar\'{e} inequality if there exist $C_{PI}$ and a dilation factor $\lambda \geq 1 $ such that 
	\begin{equation*}
	\dashint_{B(y,r)} |u-u_{B(y,r)}|d\mu\leq C_{PI} r \left(\dashint_{B(y,\lambda r)}g_u^s \, d\mu\right)^{\frac{1}{s}}
	\end{equation*}
	for all balls $B(y,r) \subset X$ and for all $u \in L^1_{loc}(X)$.
\end{definition}
The following results show some self improving properties of the weak $(1,s)$-Poincar\'{e} inequality.
\begin{theorem}[\cite{BB}, Theorem 4.21]\label{sstars}
Assume that $X$ supports a weak $(1,s)$-Poincar\'{e} inequality and that $Q$ in (\ref{s}) satisfies $Q>s$. Then $X$ supports a weak $(s^*,s)$-Poincar\'{e} inequality with $s^*=\frac{Qs}{Q-s}$. More precisely, there are constants $C>0$ and a dilation factor $\lambda'>1$ such that
\begin{equation}\label{(2.8)}
	\left(\dashint_{B(y,r)} |u-u_{B(y,r)}|^{s^*} \,d\mu\right)^{\frac{1}{s^{*}}}\leq C r \left(\dashint_{B(y,\lambda' r)}g_u^s \, d\mu\right)^{\frac{1}{s}},
\end{equation}
for all balls $B(y, r)\subset X$ and all integrable functions $u$ in $B(y, r)$. The dilation factor $\lambda'$ depends on $\lambda$ from Definition \ref{PI}.
\end{theorem}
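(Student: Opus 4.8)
This is the classical Sobolev--Poincar\'e self-improvement of a weak Poincar\'e inequality, and the plan is to prove it by the telescoping (Hedberg-potential) argument, as in \cite{BB}. Fix $B=B(y,r)$ and choose a dilation factor $\lambda'>1$, depending only on $\lambda$, large enough that $B(z,\lambda\rho)\subset B(y,\lambda'r)=:B'$ whenever $z\in B$ and $0<\rho\le 2r$. Put $h:=g_u^s\chi_{B'}$ and let $M$ denote the Hardy--Littlewood maximal operator, which on a doubling space is of weak type $(1,1)$ by the Vitali covering lemma. Since $\mu$ is doubling, $\mu$-almost every $x\in B$ is a Lebesgue point of $u$, so with $B_i:=B(x,2^{-i}r)$ one has $u_{B_i}\to u(x)$. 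Telescoping $|u(x)-u_B|$ along the chain $\{B_i\}_{i\ge-1}$, using $|u_{B_{i+1}}-u_{B_i}|\le C_d\,\dashint_{B_i}|u-u_{B_i}|\,d\mu$ (valid by the doubling property) and then the weak $(1,s)$-Poincar\'e inequality on each $B_i$, gives, for $\mu$-almost every $x\in B$, the pointwise discrete-potential bound
\[
|u(x)-u_B|\ \le\ C\sum_{i\ge-1}2^{-i}r\Big(\dashint_{\lambda B_i}g_u^s\,d\mu\Big)^{1/s}.
\]

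Next one runs a Hedberg-type optimization on this sum, split at a free radius $\rho\in(0,r]$. For the balls with $2^{-i}r\ge\rho$ one estimates $\dashint_{\lambda B_i}g_u^s\,d\mu\le\mu(\lambda B_i)^{-1}\|h\|_{L^1}$ and uses the lower mass bound $\mu(\lambda B_i)\ge C(2^{-i})^Q\mu(B)$ that comes from Lemma~\ref{lemm3.3} and \eqref{s}; then --- and this is where the hypothesis $Q>s$ enters --- the resulting series is a convergent geometric-type series bounded by $Cr^{Q/s}\rho^{1-Q/s}\big(\dashint_{B'}g_u^s\,d\mu\big)^{1/s}$. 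For the balls with $2^{-i}r<\rho$ one bounds $\dashint_{\lambda B_i}g_u^s\,d\mu\le Mh(x)$ and sums $\sum 2^{-i}r\le C\rho$. Choosing $\rho$ to balance the two contributions --- admissible precisely because $1-Q/s<0$ --- and using $\tfrac1s-\tfrac1{s^*}=\tfrac1Q$ yields the pointwise Sobolev--Poincar\'e estimate
\[
|u(x)-u_B|\ \le\ C\,r\Big(\dashint_{B'}g_u^s\,d\mu\Big)^{\frac1s-\frac1{s^*}}\big(Mh(x)\big)^{1/s^*}.
\]

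It remains to pass from this to \eqref{(2.8)}. Inserting the weak type $(1,1)$ bound for $M$ into the last estimate gives the weak $(s^*,s)$-Poincar\'e inequality
\[
\mu\big(\{x\in B:|u(x)-u_B|>t\}\big)\ \le\ C\Big(\frac rt\Big)^{s^*}\mu(B)^{1-s^*/s}\Big(\int_{B'}g_u^s\,d\mu\Big)^{s^*/s}\qquad(t>0).
\]
To upgrade this from weak to strong type I would use Maz'ya's truncation trick: the whole derivation above applies verbatim with $u$ replaced by any truncation $u^b_a:=\min\{b,\max\{a,u\}\}$, whose minimal $s$-weak upper gradient satisfies $g_{u^b_a}\le g_u\chi_{\{a<u<b\}}$ $\mu$-almost everywhere; applying the weak inequality to the dyadic truncations of $(u-c)^{\pm}$, where $c$ is a median of $u$ on $B$, and summing the resulting series over the scales --- the sum converging because $s^*/s>1$ --- produces the $L^{s^*}(B)$ bound \eqref{(2.8)}. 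Keeping track of the dilations throughout shows $\lambda'$ depends only on $\lambda$, and $C$ only on $C_d$, $C_{PI}$, $s$ and $Q$.

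The heart of the argument is the second step: the hypothesis $Q>s$ is exactly what makes the large-scale series summable and the balancing choice of $\rho$ admissible, and it is this optimization that fixes the sharp exponent $s^*=Qs/(Q-s)$. The final step is routine but must be carried out with some care, the only slightly delicate points being the bookkeeping of the truncation levels and the tracking of the various dilation factors so that $\lambda'$ and the constant emerge with the stated dependencies.
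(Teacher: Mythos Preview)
The paper does not actually prove this theorem: it is stated as background and attributed to \cite[Theorem~4.21]{BB}, with no argument given. Your sketch is a faithful outline of the standard proof found in \cite{BB} (telescoping/chain argument to obtain a pointwise Riesz-potential bound, Hedberg optimization using the lower mass bound \eqref{s} and the hypothesis $Q>s$, weak-type $(1,1)$ for the maximal function, then Maz'ya's truncation to pass from weak to strong type), so there is nothing to compare; your approach is the one the cited reference uses.
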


\begin{corollary}[\cite{BB}, Corollary 4.26]\label{coropoinca}
If $X$ supports a weak $(1,s)$-Poincar\'{e} inequality and $Q$ in (\ref{s}) satisfies $Q\leq s$, then $X$ supports a weak $(t,s)$-Poincar\'{e} inequality for all $1\leq t<\infty$.
\end{corollary}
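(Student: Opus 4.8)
The plan is to bootstrap from Theorem \ref{sstars} by artificially enlarging the exponent $Q$. The key observation is that the lower mass bound \eqref{s} is monotone in the exponent: if it holds with some $Q>0$, then for any $Q'\geq Q$ one has $(\rho/R)^{Q'}\leq(\rho/R)^{Q}$ whenever $0<\rho\leq R$, so \eqref{s} continues to hold with $Q'$ in place of $Q$ and with the \emph{same} constant $C$. Hence, although we are only told that \eqref{s} holds with some $Q\leq s$, we are free to run all subsequent arguments with any $Q'>s$ we like.

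First I would fix $t\in[1,+\infty)$ and choose $Q'>s$ close enough to $s$ that the Sobolev exponent $s^{*}:=\frac{Q's}{Q'-s}$ satisfies $s^{*}\geq t$; this is possible because $s^{*}\to+\infty$ as $Q'\to s^{+}$, and any such $Q'$ automatically satisfies $Q'>s\geq Q$, so \eqref{s} holds with $Q'$. Applying Theorem \ref{sstars} with $Q'$ in place of $Q$ then yields constants $C>0$ and a dilation factor $\lambda'>1$ such that
\begin{equation*}
\left(\dashint_{B(y,r)}|u-u_{B(y,r)}|^{s^{*}}\,d\mu\right)^{\frac{1}{s^{*}}}\leq Cr\left(\dashint_{B(y,\lambda'r)}g_{u}^{s}\,d\mu\right)^{\frac{1}{s}}
\end{equation*}
for all balls $B(y,r)\subset X$ and all $u$ integrable on $B(y,r)$.

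Next, since every ball $B(y,r)$ has finite positive measure and $t\leq s^{*}$, Jensen's inequality applied to $|u-u_{B(y,r)}|$ on $B(y,r)$ equipped with the normalized measure $d\mu/\mu(B(y,r))$ gives
\begin{equation*}
\left(\dashint_{B(y,r)}|u-u_{B(y,r)}|^{t}\,d\mu\right)^{\frac{1}{t}}\leq\left(\dashint_{B(y,r)}|u-u_{B(y,r)}|^{s^{*}}\,d\mu\right)^{\frac{1}{s^{*}}}.
\end{equation*}
Combining the two displays shows that $X$ supports a weak $(t,s)$-Poincar\'{e} inequality, with dilation factor $\lambda'$ and a constant depending on $C_{d}$, $s$, $t$ and the data of the original weak $(1,s)$-Poincar\'{e} inequality. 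Since $t\in[1,+\infty)$ was arbitrary, this proves the claim.

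I do not expect a genuine obstacle here: the only thing to notice is the monotonicity of \eqref{s} in the exponent, which converts the degenerate case $Q\leq s$ (where Theorem \ref{sstars} would read $s^{*}=+\infty$) into a family of non-degenerate applications of that theorem with $Q'$ ranging over $(s,+\infty)$. The price is that the constant in the resulting $(t,s)$-Poincar\'{e} inequality degenerates as $t\to+\infty$ (since $Q'\to s^{+}$ forces $C\to\infty$ in Theorem \ref{sstars}), which is consistent with the fact that one cannot in general reach $t=+\infty$.
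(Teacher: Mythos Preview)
Your proof is correct and is precisely the argument the paper signals via Remarks \ref{rem1poin} and \ref{rem2poin}: enlarge $Q$ past $s$ by the monotonicity of \eqref{s}, invoke Theorem \ref{sstars} with the new exponent, then lower the left-hand side from $s^{*}$ to $t$ by H\"older (equivalently, Jensen). The paper itself gives no proof here, citing \cite{BB} instead, so there is nothing further to compare.
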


\begin{remark}\label{rem1poin} By the H\"older inequality we see that a weak $(s^*,s)$-Poincar\'{e} inequality implies the same inequality for smaller values of $s^*$. Meaning that $X$ will then support a weak $(t,s)$-Poincar\'{e} inequality for all $1<t<s^*$.
\end{remark}

\begin{remark}\label{rem2poin} We note that the exponent $Q$ in (\ref{s}) is not uniquely determined, in particular, since $\rho< R$, we can always make $Q$ larger. Thus, the assumption $Q>s$ in Theorem \ref{sstars} can always be fulfilled. 
\end{remark}

\begin{theorem}[\cite{KZ}]\label{spoincare}
Let $(X, d, \mu)$ be a complete metric measure space with $\mu$ Borel and doubling, supporting a weak $(1,p)$-Poincar\'{e} inequality for $p>1$, then there exists $\epsilon>0$ such that $X$ supports a weak $(1, s)$-Poincar\'{e} inequality for every $s>p-\epsilon$. 
\end{theorem}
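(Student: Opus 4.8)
This is the Keith--Zhong openness theorem, and its proof is genuinely delicate; I only describe the line of attack I would take. \textbf{Reductions first.} By H\"older's inequality the assumed weak $(1,p)$-Poincar\'e inequality automatically gives a weak $(1,s)$-Poincar\'e inequality for every $s\ge p$, so the plan is to produce a single $\epsilon>0$ for which the weak $(1,p-\epsilon)$-Poincar\'e inequality holds; the whole range $s>p-\epsilon$ then follows once more by H\"older. Using Remark \ref{rem2poin} I would fix the exponent in \eqref{s} so that $Q>p$, which keeps the Sobolev type strengthening of Theorem \ref{sstars} available throughout. Finally, since $X$ is complete, doubling and supports a $(1,p)$-Poincar\'e inequality, it is quasiconvex, so after replacing $d$ by a bi-Lipschitz equivalent length metric---which affects neither the hypotheses nor the conclusion---I may assume $X$ is geodesic, streamlining the chaining arguments below.

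\textbf{A pointwise reformulation.} In a doubling space the weak $(1,p)$-Poincar\'e inequality is equivalent to the pointwise estimate
\[
|u(x)-u(y)|\le C\,d(x,y)\Big(\big(\mathcal{M}_{R}g_u^p(x)\big)^{1/p}+\big(\mathcal{M}_{R}g_u^p(y)\big)^{1/p}\Big),\qquad R=C\,d(x,y),
\]
holding for $\mu$-a.e. $x,y$, where $\mathcal{M}_R h(x)=\sup_{0<r\le R}\dashint_{B(x,r)}|h|\,d\mu$ and $g_u$ is the minimal $p$-weak upper gradient; the passage back from the pointwise bound to the integrated Poincar\'e inequality uses a Kolmogorov type inequality for the maximal operator. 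So I would try to self-improve this pointwise estimate.

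\textbf{Lipschitz truncation and telescoping.} Fix a ball $B$ and write $g=g_u$. For dyadic heights $\lambda_j=2^j\lambda_0$ introduce the bad sets $E_j=\{x\in 2B:\ \mathcal{M}_{CR}g^p(x)>\lambda_j^p\}$. By the pointwise estimate $u$ is $C\lambda_j$-Lipschitz on $2B\setminus E_j$; extending by McShane's lemma to $u_j$ on $X$ with $\mathrm{Lip}\,u_j\le C\lambda_j$ gives $g_{u_j}\le C\lambda_j$ $\mu$-a.e. and $u_j=u$ off $E_j$. Decomposing $u-u_B$ into the telescoping series $\sum_j(u_{j+1}-u_j)$, whose minimal upper gradients are dominated by $C\lambda_{j+1}\chi_{E_j}$, and applying the $(1,p)$-Poincar\'e inequality together with Theorem \ref{sstars} to each increment, the task reduces to controlling $\sum_j\lambda_j^{p-\epsilon}\mu(E_j)$ by $C\mu(B)\dashint_{\lambda B}g^{p-\epsilon}\,d\mu$.

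\textbf{The main obstacle.} The naive telescoping above does not close, because that last bound would require a weak-type decay of $\mu(E_j)$ with exponent strictly larger than $p$, i.e.\ a hidden higher integrability of $g$, which is \emph{not} available at the outset---this is exactly the crux. Keith and Zhong's device is a self-correcting iteration: one feeds the Poincar\'e inequality (applied to the truncations) back into itself, exploits the $L^{p/(p-\epsilon)}$-boundedness of the maximal operator for small $\epsilon$, uses doubling and the length-space structure to control the overlaps generated by the truncation, and tracks the loss at each scale so precisely that the resulting series converges and the error term can be absorbed into the left-hand side for a suitable $\epsilon>0$. This absorption step is the heart of the argument and is precisely what makes the Poincar\'e inequality an open-ended condition; for the complete proof I refer to \cite{KZ}.
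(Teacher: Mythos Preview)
The paper does not prove this statement at all; it is quoted verbatim from Keith--Zhong \cite{KZ} and used as a black box (the point in the paper is only to fix some $s<p$ for which the $(1,s)$-Poincar\'e inequality holds). There is therefore no ``paper's own proof'' to compare against: your sketch and the paper agree in the only way possible, namely by deferring to \cite{KZ}. Your outline of the Keith--Zhong strategy---reduction to a single $\epsilon$ via H\"older, passage to a geodesic metric, the maximal-function/pointwise reformulation, Lipschitz truncation at dyadic levels, and the self-improving absorption step---is an accurate high-level summary of that argument, and you are right that the absorption is the crux and that the naive telescoping does not close on its own.
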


 Before introducing the  $(p,q)$-Dirichlet boundary value problem, we need a suitable working space. The Newtonian space $N^{1,q}(X)$ is defined by 
  \begin{equation}\label{space}N^{1,q}(X)=V^{1,q}(X)\cap L^q (X), \quad q\in [1,+\infty],
  \end{equation} where
 $ V^{1,q}(X) = \{u: u \ \mbox {is measurable and }  g_u\in L^q (X) \}.$
 We consider $N^{1,q}(X)$ equipped with the norm $$\|u\|_{N^{1,q}(X)}=\|g_u\|_{L^q (X)} + \|u\|_{L^q (X)}.$$ 
We can naturally consider $\Omega\subset X$ non empty open set as a metric space in its own right (with the restrictions of $d$ and $\mu$). The Newtonian space $N^{1,q}(\Omega)$ is then given by \eqref{space}.
The Newtonian space with zero boundary values is defined as
$$N^{1,q}_0(\Omega)=\{u_{| \Omega}: u \in N^{1,q}(X), u=0 \quad \mbox{in } X \setminus \Omega \}.$$
That is, a function belongs to $N^{1,q}_0(\Omega)$ if and only if its zero extension to $X \setminus \Omega$ belongs to $N^{1,q}(X)$. We shall therefore always assume that $\mu(X \setminus \Omega)>0$.

Let $L^q_{loc} (\Omega)$ be the space of all measurable functions that are $q$-integrable on bounded subsets of $X$.
The space $N^{1,q}_{loc}(\Omega)$ is defined by $$N^{1,q}_{loc}(\Omega)=V^{1,q}_{loc}(\Omega)\cap L^q_{loc} (\Omega), \quad q\in [1,+\infty],$$ where
	$ V^{1,q}_{loc}(\Omega) = \{u: u \ \mbox {is measurable and }  g_u\in L^q_{loc} (\Omega) \}.$
	
The following lemma implies a Sobolev inequality for Sobolev functions with zero boundary values (see Corollary \ref{RemarkPoincare}).
\begin{lemma}[\cite{KS}, Lemma 2.1]\label{Lemma 2.1 KS}
 Let $X$ be a doubling metric measure space supporting a weak
$(1, s)$-Poincar\'{e} inequality for some $1< s < q$. Suppose that $u \in N^{1,q}(X)$ and let $A = \{x \in B(y,R): |u(x)| > 0\}$. If $\mu(A) \leq \gamma\mu(B(y,R))$ for some $\gamma$ with $0 < \gamma < 1$, then there is a constant $C>0$ so that
\begin{equation*}
\left(\dashint_{B(y,r)} |u|^t \, d\mu\right)^{\frac{1}{t}}\leq C r \left(\dashint_{B(y,\lambda' r)}g_u^s \, d\mu\right)^{\frac{1}{s}},
\end{equation*}
where $t $ is given by Remark \ref{rem1poin} and $\lambda'$ is as in \eqref{(2.8)}. The constant $C$ depends only on $\gamma$ and the constants $C$ and $\lambda'$ of \eqref{(2.8)}.
\end{lemma}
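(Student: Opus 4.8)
The statement to prove is Lemma 2.1 from \cite{KS}, which gives a Sobolev-type inequality for Newtonian functions whose support is not too large inside a ball.

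\medskip

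The plan is to reduce the claimed inequality to the Sobolev--Poincar\'e inequality \eqref{(2.8)} from Theorem \ref{sstars} by controlling the mean value $u_{B(y,r)}$ in terms of the gradient, exploiting the smallness of the support $A$. First I would fix the ball $B=B(y,r)$ and write, using the triangle inequality in $L^t(B)$,
\begin{equation*}
\left(\dashint_{B} |u|^t \, d\mu\right)^{1/t}\leq \left(\dashint_{B} |u-u_{B}|^t \, d\mu\right)^{1/t} + |u_{B}|.
\end{equation*}
The first term is already controlled by $Cr\bigl(\dashint_{B(y,\lambda' r)}g_u^s\,d\mu\bigr)^{1/s}$ via \eqref{(2.8)} together with Remark \ref{rem1poin} (after possibly enlarging $Q$ by Remark \ref{rem2poin} so that $Q>s$, or invoking Corollary \ref{coropoinca} if $Q\le s$), noting $t\le s^*$. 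So the whole task is to estimate $|u_B|$.

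\medskip

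The key observation is that $u$ vanishes on $B\setminus A$, and $\mu(B\setminus A)\geq(1-\gamma)\mu(B)>0$. Hence
\begin{equation*}
|u_B| = \left|\dashint_B u\,d\mu\right| = \frac{1}{\mu(B)}\left|\int_{B\setminus A}(u-u_B)\,d\mu + \int_{A}(u-u_B)\,d\mu - \mu(B\setminus A)\,u_B + \cdots\right|,
\end{equation*}
but more cleanly: since $u=0$ on $B\setminus A$, we have $|u_B| = \frac{1}{\mu(B\setminus A)}\bigl|\int_{B\setminus A}(u_B-u)\,d\mu\bigr| \le \frac{1}{\mu(B\setminus A)}\int_{B}|u-u_B|\,d\mu \le \frac{1}{1-\gamma}\dashint_B |u-u_B|\,d\mu$. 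Then H\"older's inequality bounds $\dashint_B|u-u_B|\,d\mu$ by $\bigl(\dashint_B|u-u_B|^t\,d\mu\bigr)^{1/t}$, which again is controlled by \eqref{(2.8)}. Collecting terms gives the claim with a constant $C$ depending only on $\gamma$ and on the constants $C$, $\lambda'$ of \eqref{(2.8)}, as asserted.

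\medskip

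I do not expect a serious obstacle here; the argument is a standard "Poincar\'e kills the mean when the zero set is large" trick. The only points requiring care are bookkeeping ones: making sure the Poincar\'e exponent $s^*$ (or an arbitrary $t$ in the case $Q\le s$) dominates the target exponent $t$ so that Remark \ref{rem1poin} applies, and tracking that the dilation factor on the right-hand side stays $\lambda'$ throughout (no further enlargement is introduced, since every use of Poincar\'e is on the same ball $B(y,r)$ with dilated ball $B(y,\lambda' r)$). A minor subtlety is that $u\in N^{1,q}(X)$ with $q>s$ guarantees $u\in L^1_{loc}$ and $g_u\in L^s_{loc}$, so \eqref{(2.8)} is indeed applicable.
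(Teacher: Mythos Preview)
The paper does not give its own proof of this lemma; it merely cites \cite{KS}, Lemma 2.1. Your argument is correct and is precisely the standard proof found in \cite{KS}: split off the mean via the triangle inequality, control $|u_B|$ using that $u$ vanishes on a set of relative measure at least $1-\gamma$, and apply the $(t,s)$-Poincar\'e inequality \eqref{(2.8)} to the oscillation term.

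One cosmetic remark: in the lemma as stated in this paper there is an apparent mismatch between $R$ (in the definition of $A$) and $r$ (in the concluding inequality); these are meant to be the same radius, and your write-up implicitly treats them as such, which is the intended reading.
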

\begin{corollary}[see \cite{KS}]\label{RemarkPoincare}
Under the same hypotheses of the previous lemma, there exists $C>0$ so that for every ball $B(y, r)$ with $0 < r \leq\frac{{\rm diam}(X)}{3}$ and every $u \in N^{1,q}_0 (B(y, r))$ we have
\begin{equation}\label{2.6KS}
\left(\dashint_{B(y,r)} |u|^t \, d\mu\right)^{\frac{1}{t}}\leq C r \left(\dashint_{B(y,r)}g_u^s \, d\mu\right)^{\frac{1}{s}}
\end{equation}where $t $ is given by Remark \ref{rem1poin}. 
\end{corollary}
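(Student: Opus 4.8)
The plan is to derive \eqref{2.6KS} from Lemma \ref{Lemma 2.1 KS} applied on the dilated ball $B(y,2r)$, exploiting that a function with zero boundary values in $B(y,r)$, together with its minimal $q$-weak upper gradient, vanishes outside $B(y,r)$. So I would fix $u\in N^{1,q}_0(B(y,r))$ and identify it with its zero extension to $X$; by the very definition of $N^{1,q}_0(B(y,r))$ this extension belongs to $N^{1,q}(X)$, and since $u\equiv 0$ on $X\setminus B(y,r)$ the standard locality property of minimal weak upper gradients ($g_u=0$ $\mu$-a.e.\ on every set where $u$ is constant) gives $g_u=0$ $\mu$-a.e.\ on $X\setminus B(y,r)$ as well. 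Writing $A:=\{x\in B(y,2r):|u(x)|>0\}$, the inclusion $A\subset B(y,r)$ is then clear, so it remains to verify $\mu(A)\le\gamma\,\mu(B(y,2r))$ for some $\gamma\in(0,1)$ depending only on $C_d$, which is the smallness hypothesis of Lemma \ref{Lemma 2.1 KS}.

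Since $\mu(A)\le\mu(B(y,r))$, the required smallness reduces to the \emph{reverse doubling inequality} $\mu(B(y,r))\le\gamma\,\mu(B(y,2r))$ — and this is the only genuine obstacle; everything else is bookkeeping with the doubling property. To establish it I would use that a complete doubling metric measure space supporting a Poincar\'e inequality is connected (in fact quasiconvex), so the continuous function $x\mapsto d(y,x)$ has connected image containing $0$ and some value $\ge {\rm diam}(X)/2\ge\tfrac32 r$ (here the hypothesis $r\le{\rm diam}(X)/3$ is used); hence it also attains the value $\tfrac54 r$, and for such a point $z$ the ball $B(z,\tfrac18 r)$ is contained in the annulus $B(y,2r)\setminus B(y,r)$. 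The doubling property bounds $\mu(B(z,\tfrac18 r))$ from below by $c_1\,\mu(B(y,2r))$ with $c_1=c_1(C_d)\in(0,1)$, and since $B(y,r)$ and $B(z,\tfrac18 r)$ are disjoint subsets of $B(y,2r)$ we obtain $\mu(B(y,r))\le(1-c_1)\,\mu(B(y,2r))$; thus $\gamma:=1-c_1$ works, uniformly in $y$ and in $r\le{\rm diam}(X)/3$.

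With the hypothesis of Lemma \ref{Lemma 2.1 KS} verified at the scale $2r$, that lemma yields
\[
\Bigl(\dashint_{B(y,2r)}|u|^t\,d\mu\Bigr)^{1/t}\le C\,(2r)\Bigl(\dashint_{B(y,2\lambda' r)}g_u^s\,d\mu\Bigr)^{1/s},
\]
with $t$ as in Remark \ref{rem1poin} and $C$ depending only on $\gamma$ and on the constants of \eqref{(2.8)}. Finally I would pass back to $B(y,r)$: since $u=0$ and $g_u=0$ $\mu$-a.e.\ outside $B(y,r)$, the integrals of $|u|^t$ and of $g_u^s$ over the dilated balls equal those over $B(y,r)$; combining this with the doubling chain $\mu(B(y,r))\le\mu(B(y,2r))\le C_d\,\mu(B(y,r))$ and with $\mu(B(y,r))\le\mu(B(y,2\lambda' r))$ converts both averages into averages over $B(y,r)$, picking up only a factor $C_d^{1/t}$ on the left-hand side and losing nothing on the right-hand side. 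This produces \eqref{2.6KS} with a constant $C=C(C_d,\lambda',s,q)$, completing the argument.
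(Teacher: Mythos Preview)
Your argument is correct. The paper itself does not supply a proof of this corollary; it simply records the statement with the attribution ``[see \cite{KS}]'' and moves on, so there is nothing in the paper to compare against line by line. What you have written is precisely the standard derivation one finds in \cite{KS}: extend $u$ by zero, use connectedness of $X$ together with doubling to get the reverse-doubling estimate $\mu(B(y,r))\le\gamma\,\mu(B(y,2r))$ for some $\gamma<1$ (this is where the restriction $r\le{\rm diam}(X)/3$ enters), apply Lemma~\ref{Lemma 2.1 KS} on the doubled ball, and then collapse the dilated averages back to $B(y,r)$ using that $u$ and $g_u$ vanish outside $B(y,r)$ and that $\mu$ is doubling. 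Each step is sound and the constants depend only on the structural data, as claimed.
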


\begin{definition}[\cite{BB}, Definition 6.13] Let $E\subset X$ be a Borel set. We define the $q$-capacity of $E$ as
		\begin{equation*}
			{\rm C}_{q}(E)=\inf_{u}\left(\int_{X} |u|^{q} \, d\mu + \inf_{u}\int_{X} g_{u}^{q} \, d\mu\right),
		\end{equation*}
		where the infimum is taken over all $u\in N^{1,q}(X)$ such that $u=1$ on $E$.
	\end{definition}	
	We say that a property holds $q$-quasieverywhere ($q$-q.e) if the set of points
	for which it does not hold has q-capacity zero.
	
We report here the definition of variational capacity, that is involved in the pointwise estimate for $(p,q)$-quasiminimizers. 
\begin{definition}[\cite{BB}, Definition 6.13] Let $B(y, r)\subset X$ be a ball and $E\subset B(y, r)$. We define the variational capacity
	\begin{equation*}
		{\rm cap}_{q}(E,B(y, 2r))=\inf_{u}\int_{B(y, 2r)}g_{u}^{q} \, d\mu
	\end{equation*}
	where the infimum is taken over all $u\in N_{0}^{1,q}(B(y, 2r))$ such that $u\geq 1$ on $E$.
\end{definition}

\begin{remark}[\cite{BB}, Corollary A.9]\label{gradients}
	Let  $1<p<q$, $u \in N^{1,q}(X)$. If $(X, d, \mu)$  is a complete doubling $(1,p)$-Poincar\'{e} space, then the minimal $p$-weak upper gradient and the minimal $q$-weak upper gradient of $u$ coincide $\mu$-a.e.
\end{remark}

\textit{Throughout this paper, we consider a complete metric measure space $(X, d, \mu)$ with metric $d$ and a doubling Borel regular measure $\mu$. In virtue of Remark \ref{gradients}, we consider $q$-weak upper gradients rather than $p$-weak upper gradients. Moreover, we assume that $X$ supports a weak $(1,p)$-Poincar\'{e} inequality with $1<p<q<p^*$. From now on and without further notice, we fix $1<s<p$ for which $X$ also admits a weak  $(1,s)$-Poincar\'e  inequality. Such $s$ is given by Theorem \ref{spoincare} and will be used in various of our results.
We also consider a non empty open subset $\Omega\subset X$ such that $\mu(X \setminus \Omega)>0$.}
\section{$(p,q)$-quasiminizers}\label{quasimin}
In this note, we are interested in anisotropic energy integrals, which satisfy the so called $(p,q)$-growth condition for some exponents $1<p<q$.  Since we work in metric measure spaces and due to the methods we use, we treat it under sharp assumptions. That is, 
\begin{equation}\label{integral}
	\int_{\Omega} a g_{u}^p\, d \mu + \int_{\Omega} b g_{u}^q\, d \mu,
\end{equation}
for some bounded measurable functions $a,b:X \to \mathbb{R}$ with $0<\alpha\leq a,b \leq \beta$, for some positive constants $\alpha, \beta$. This is a relevant perturbation of the $p$-Dirichlet integral. For more information on this kind of problems we refer the reader to \cite{Ma0} and the references therein.

Now, we introduce the definition of $(p,q)$-quasiminimizers of integral \eqref{integral}.
\begin{definition}[$(p,q)$-quasiminimizer]\label{qm}
	A function $u\in N^{1,q}_{loc}({\Omega})$ is a $(p,q)$-quasi-\\minimizer on $\Omega$ if there exists $K\geq 1$ such that for every bounded open subset $\Omega'$ of $\Omega$ with $\overline{\Omega'}\subset \Omega$ and for all functions $v \in N^{1,q}({\Omega'})$ with $u-v\in  N^{1,q}_0({\Omega'})$ the inequality
\begin{align}\label{min}
\int_{\Omega'} a g_{u}^p\, d \mu + \int_{\Omega'} bg_{u}^q\, d \mu
\leq& K\left(\int_{\Omega'}a g_{v}^p\, d \mu + \int_{\Omega'} b g_{v}^q \,d \mu \right)
\end{align} 
holds,  where $g_{u}$, $g_{v}$ are the minimal $q$-weak upper gradients of $u$ and $v$ in ${\Omega}$, respectively.
If $K=1$, then $u$ is called $(p,q)$-minimizer on $\Omega$. Furthermore, a function $u\in N^{1,q}({\Omega})$ is a global $(p,q)$-minimizer on $\Omega$ if \eqref{min} is satisfied with $\Omega$ instead of $\Omega'$ and for all $v \in N^{1,q}_0({\Omega})$.
\end{definition}
From now on, to simplify notation we refer to global $(p,q)$-minimizers by just writing $(p,q)$-minimizers.

We obtain uniqueness of $(p,q)$-minimizers as a corollary of the next theorem, which states that their $p$-weak upper gradients coincide. This will also be used to prove a boundary regularity result (see Section \ref{Sec7}). 
\begin{theorem}\label{uniqueness}
	Let $u_1, u_2 \in N^{1,q}_{loc}({\Omega})$,with $u_1-u_2 \in  N^{1,q}_0({\Omega})$,  be  $(p,q)$-minimizers on $\Omega$. Then $g_{u_1}= g_{u_2}$ $\mu$-a.e. on $\Omega$.
\end{theorem}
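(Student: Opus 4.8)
The plan is to exploit strict convexity of the integrand in the gradient variable, run a standard "midpoint" comparison argument, and use the minimality of $u_1$ and $u_2$ against the competitor $w=\tfrac12(u_1+u_2)$. First I would check that $w$ is an admissible competitor for both $u_1$ and $u_2$: since $u_1-u_2\in N^{1,q}_0(\Omega)$, we have $u_1-w=\tfrac12(u_1-u_2)\in N^{1,q}_0(\Omega)$ and likewise $u_2-w\in N^{1,q}_0(\Omega)$, and $w\in N^{1,q}_{\mathrm{loc}}(\Omega)$. Next I would record the basic gradient estimate $g_w\le \tfrac12(g_{u_1}+g_{u_2})$ $\mu$-a.e., which follows because $\tfrac12(g_{u_1}+g_{u_2})$ is a $q$-weak upper gradient of $w$ and $g_w$ is the minimal one.

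\textbf{Key steps.} Writing $I(v)=\int_\Omega a g_v^p\,d\mu+\int_\Omega b g_v^q\,d\mu$, minimality of $u_1$ gives $I(u_1)\le I(w)$ and minimality of $u_2$ gives $I(u_2)\le I(w)$, hence $\tfrac12 I(u_1)+\tfrac12 I(u_2)\le I(w)$. On the other hand, using $g_w\le\tfrac12(g_{u_1}+g_{u_2})$ together with convexity of $t\mapsto a(x)t^p$ and $t\mapsto b(x)t^q$ (valid since $p,q>1$ and $a,b\ge\alpha>0$), one gets
\begin{equation*}
I(w)\le\int_\Omega a\Bigl(\tfrac{g_{u_1}+g_{u_2}}2\Bigr)^p d\mu+\int_\Omega b\Bigl(\tfrac{g_{u_1}+g_{u_2}}2\Bigr)^q d\mu\le \tfrac12 I(u_1)+\tfrac12 I(u_2).
\end{equation*}
Combining the two chains forces equality throughout; in particular
\begin{equation*}
\int_\Omega a\Bigl(\tfrac{g_{u_1}+g_{u_2}}2\Bigr)^p d\mu=\tfrac12\int_\Omega a g_{u_1}^p d\mu+\tfrac12\int_\Omega a g_{u_2}^p d\mu,
\end{equation*}
and the analogous identity with $b$ and exponent $q$. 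Since $t\mapsto t^p$ is \emph{strictly} convex for $p>1$, the pointwise Jensen gap $\tfrac12 t_1^p+\tfrac12 t_2^p-\bigl(\tfrac{t_1+t_2}2\bigr)^p$ is strictly positive whenever $t_1\neq t_2$, and with the weight $a\ge\alpha>0$ the vanishing of the integral forces $g_{u_1}=g_{u_2}$ $\mu$-a.e. on $\Omega$.

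\textbf{Main obstacle.} The one genuinely nontrivial point is justifying $g_w\le\tfrac12(g_{u_1}+g_{u_2})$ for the minimal $q$-weak upper gradients on $\Omega$; this is the standard fact that sums and scalar multiples of upper gradients are upper gradients of the corresponding combination of functions (lattice/linearity properties of minimal weak upper gradients, as in \cite{BB}), so it should be cited rather than reproved. A secondary subtlety is that the argument above uses global minimality and integrals over $\Omega$, whereas $u_1,u_2$ are only assumed to be $(p,q)$-minimizers in the local sense together with $u_1-u_2\in N^{1,q}_0(\Omega)$; I would reconcile this by noting that the hypothesis $u_1-u_2\in N^{1,q}_0(\Omega)$ makes $w$ an admissible competitor on $\Omega$ itself, so the minimality inequality \eqref{min} applies with $\Omega'=\Omega$ (or, if one prefers to stay strictly local, by first exhausting $\Omega$ by open subsets $\Omega'\Subset\Omega$, running the equality argument on each, and letting $\Omega'\uparrow\Omega$). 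Everything else is a routine application of strict convexity, so I expect no further difficulty.
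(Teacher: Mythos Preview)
Your proposal is correct and follows essentially the same route as the paper: both take the midpoint competitor $w=\tfrac12(u_1+u_2)$, use $g_w\le\tfrac12(g_{u_1}+g_{u_2})$, and exploit strict convexity of $t\mapsto t^l$ to force $g_{u_1}=g_{u_2}$. The only cosmetic difference is that the paper phrases the strict convexity step as a contradiction with an explicit uniform-convexity gap on the set $D_\delta=\{|g_{u_1}-g_{u_2}|>\delta\}$, whereas you argue directly via equality in Jensen's inequality; these are equivalent formulations of the same idea.
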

\begin{proof}
	Let $\Omega'$ a bounded open subset of $\Omega$  with $\overline{\Omega'}\subset \Omega$. 
	By absurd, let $\mu(\{x \in {\Omega'} :g_{u_1}\neq g_{u_2}\}) >0$. Then we can choose $\delta>0$ such that $D_{\delta}= \{x \in {\Omega'} :|g_{u_1}-g_{u_2}|>\delta\}$ has positive measure. 
	We consider $$u=\dfrac{u_1+u_2}{2}.$$ From the definition of minimal $p$-weak upper gradient, we have that $$g_{u}\leq \dfrac{g_{u_1}+g_{u_2}}{2}.$$ The function $t\mapsto t^l$	is uniformly convex on $[0, +\infty[$, thus there exists $\epsilon= \delta^l l (2^{-1}-2^{-l})$, with $l=p,q$, such that $$\left(\frac{g_{u_1}+g_{u_2}}{2}\right)^l\leq \frac{g_{u_1}^l+g_{u_2}^l}{2}-\epsilon_l,$$
	where $|g_{u_1}-g_{u_2}|\geq\delta$. \\
	Let $\epsilon=\min\{\epsilon_1, \epsilon_2\}$.
	As a consequence, we get that 
	\begin{align}\label{uni}
		 \int_{\Omega'} a g_{u}^p d\mu+ \int_{\Omega'} b g_{u}^q d\mu
		\leq& \int_{D_{\delta}}a\left(\frac{g_{u_1}^p+g_{u_2}^p}{2}- \epsilon\right)d\mu+\int_{{\Omega'}\setminus D_{\delta}}a\left(\frac{g_{u_1}^p+g_{u_2}^p}{2}\right)d\mu\nonumber \\ 	&+\int_{D_{\delta}}b\left(\frac{g_{u_1}^q+g_{u_2}^q}{2}- \epsilon\right)d\mu+\int_{{\Omega'}\setminus D_{\delta}}b\left(\frac{g_{u_1}^q+g_{u_2}^q}{2}\right)d\mu\nonumber \\
		=&\frac{1}{2}\left(\int_{\Omega'}ag_{u_1}^p d\mu+\int_{\Omega'}bg_{u_1}^q d\mu\right) 
		\nonumber \\&+\frac{1}{2}\left(\int_{\Omega'}ag_{u_2}^p d\mu+\int_{\Omega'}b g_{u_2}^q d\mu\right) 
		-2\beta \epsilon \mu (D_{\delta})\nonumber \\ 
		=&	\int_{\Omega'}ag_{u_1}^p d\mu+\int_{\Omega'}bg_{u_1}^q d\mu
		-2\beta \epsilon \mu (D_{\delta}).
	\end{align}
	Since $u_1$ is a $(p,q)$-minimizer and using \eqref{uni}, we get
	\begin{align*}
		 \int_{\Omega'} ag_{u_1}^p d\mu+ \int_{\Omega'}b g_{u_1}^q d\mu
		\leq & \int_{\Omega'} ag_{u}^p d\mu+ \int_{\Omega'} bg_{u}^q d\mu\nonumber \\	
		\leq & \int_{\Omega'}a g_{u_1}^p d\mu+ \int_{\Omega'}b g_{u_1}^q d\mu -2\beta \epsilon \mu (D_{\delta}),
	\end{align*}
	that is absurd. Thus, we have proven that $g_{u_1}= g_{u_2}$ $\mu$-a.e. on $\Omega'$ and so, on $\Omega$.
\end{proof} 

\begin{corollary}[Uniqueness of $(p,q)$-minimizers]\label{unicitycor} Let $u_1, u_2 \in N^{1,q}_{loc}({\Omega})$,with $u_2-u_1 \in  N^{1,q}_0({\Omega})$,  be  $(p,q)$-minimizers on $\Omega$. Then $u_1= u_2$ $\mu$-a.e. in $\Omega$.
\end{corollary}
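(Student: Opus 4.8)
The plan is to deduce the statement from Theorem~\ref{uniqueness} together with the Sobolev--Poincar\'e inequality for Newtonian functions with zero boundary values. First I would apply Theorem~\ref{uniqueness} to $u_1$ and $u_2$: since $u_2-u_1\in N^{1,q}_0(\Omega)$ and both functions are $(p,q)$-minimizers, it gives $g_{u_1}=g_{u_2}$ $\mu$-a.e.\ on $\Omega$, so in particular $u_1$ and $u_2$ realise the same minimal value $I$ of the $(p,q)$-energy.

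Set $w:=u_1-u_2\in N^{1,q}_0(\Omega)$. The core of the argument is to show that $g_w=0$ $\mu$-a.e.\ on $\Omega$; once this is known the corollary follows at once. To obtain it I would first note, repeating the computation in the proof of Theorem~\ref{uniqueness} and keeping track of the equality cases, that for every $t\in[0,1]$ the convex combination $u_t:=(1-t)u_1+tu_2$ is again a $(p,q)$-minimizer with $g_{u_t}=g_{u_1}$ $\mu$-a.e.: indeed $u_t-u_1=t(u_2-u_1)\in N^{1,q}_0(\Omega)$ and, by subadditivity and homogeneity of the minimal $q$-weak upper gradient, $g_{u_t}\le(1-t)g_{u_1}+tg_{u_2}=g_{u_1}$ $\mu$-a.e., so minimality of $u_1$ forces $\int_\Omega(ag_{u_t}^p+bg_{u_t}^q)\,d\mu=I$ and hence, using $a,b\ge\alpha>0$, $g_{u_t}=g_{u_1}$ $\mu$-a.e. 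I would then combine this with the identities $u_t-u_{t'}=(t'-t)w$ and with the lattice property of $(p,q)$-minimizers --- $\max(u_1,u_2)$ and $\min(u_1,u_2)$ are again $(p,q)$-minimizers with the same boundary values, since $g_{\max(u_1,u_2)}^{l}+g_{\min(u_1,u_2)}^{l}=g_{u_1}^{l}+g_{u_2}^{l}$ $\mu$-a.e.\ for $l=p,q$ and both are admissible competitors --- so as to reduce to two ordered minimizers and conclude that $g_w=0$ $\mu$-a.e.\ on $\Omega$. I expect this last deduction to be the main obstacle: in a general metric space the $(p,q)$-energy is convex but not strictly convex as a functional of $u$, so here one must genuinely use the constraint $u_1-u_2\in N^{1,q}_0(\Omega)$ and not merely re-invoke Theorem~\ref{uniqueness}.

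Finally, from $w\in N^{1,q}_0(\Omega)$ and $g_w=0$ $\mu$-a.e.\ on $\Omega$ I would conclude as follows. Extend $w$ by zero to $\bar w\in N^{1,q}(X)$; then $g_{\bar w}=0$ $\mu$-a.e.\ on $\Omega$ and, since $\bar w$ vanishes on $X\setminus\Omega$, also $\mu$-a.e.\ on $X\setminus\Omega$, so $g_{\bar w}=0$ $\mu$-a.e.\ on $X$. As $X$ is connected (being complete, doubling and supporting a Poincar\'e inequality), $\bar w$ is $\mu$-a.e.\ constant, and since $\bar w=0$ on $X\setminus\Omega$ with $\mu(X\setminus\Omega)>0$ we get $\bar w\equiv0$, i.e.\ $u_1=u_2$ $\mu$-a.e.\ in $\Omega$. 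Alternatively, choosing a ball $B=B(y,r)\supset\Omega$ with $r\le\operatorname{diam}(X)/3$, so that $w\in N^{1,q}_0(B)$, Corollary~\ref{RemarkPoincare} gives $\bigl(\dashint_B|w|^t\,d\mu\bigr)^{1/t}\le Cr\bigl(\dashint_B g_w^s\,d\mu\bigr)^{1/s}=0$, whence $w=0$ $\mu$-a.e.
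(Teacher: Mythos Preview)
Your overall architecture is the same as the paper's: invoke Theorem~\ref{uniqueness} to get $g_{u_1}=g_{u_2}$, reduce the question to proving $g_{w}=0$ for $w=u_1-u_2\in N^{1,q}_0(\Omega)$, and then finish with the Sobolev--Poincar\'e inequality from Corollary~\ref{RemarkPoincare}. The last step is fine and coincides with what the paper does.

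The gap is exactly where you flag it. Knowing that every $u_t=(1-t)u_1+tu_2$, as well as $\max(u_1,u_2)$ and $\min(u_1,u_2)$, is a $(p,q)$-minimizer with minimal upper gradient equal to $g_{u_1}$ $\mu$-a.e.\ does \emph{not} by itself force $g_{u_1-u_2}=0$: in a metric space $g_{u_1-u_2}$ is controlled only by $g_{u_1}+g_{u_2}$, and there is no identity of the type $g_{u_1-u_2}=|g_{u_1}-g_{u_2}|$ to exploit. Reducing to the ordered case $u_1\le u_2$ via the lattice property does not help either, for the same reason. So the sentence ``reduce to two ordered minimizers and conclude that $g_w=0$'' hides the entire difficulty; as written, this is not a proof.

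The paper supplies the missing idea with a level-set truncation. For each $c\in\mathbb{R}$ one takes the competitor
\[
u=\max\{u_1,\min\{u_2,c\}\},
\]
checks that $u-u_1\in N^{1,q}_0(\Omega)$, and observes that $u\equiv c$ (hence $g_u=0$) on $V_c=\{x\in\Omega:u_1(x)<c<u_2(x)\}$, while $g_u=g_{u_1}=g_{u_2}$ on $\Omega\setminus V_c$. Testing the minimality of $u_1$ against $u$ and using $a,b\ge\alpha>0$ forces $g_{u_1}=g_{u_2}=0$ $\mu$-a.e.\ on $V_c$. Since $\{u_1<u_2\}\subset\bigcup_{c\in\mathbb{Q}}V_c$ (and symmetrically for $\{u_2<u_1\}$), one obtains $g_{u_1}=g_{u_2}=0$ on $\{u_1\neq u_2\}$, whence
\[
g_{u_1-u_2}\le(g_{u_1}+g_{u_2})\chi_{\{u_1\neq u_2\}}=0\quad\mu\text{-a.e.}
\]
This is the step your outline is missing; once you insert it, the rest of your argument goes through.
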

\begin{proof}
Since $u_1$ and $u_2$ are $(p,q)$-minimizers, by Theorem \ref{uniqueness} we have that $g_{u_1}=g_{u_2}$. We are only left to prove that $g_{u_1-u_2}=0$. The Sobolev inequality for Sobolev functions with zero boundary values, Corollary \ref{RemarkPoincare}, would then yield $\Vert u_1-u_2\Vert_{L^{2}(\Omega)}=0$ and hence $u_1=u_2$ $\mu$-a.e in $\Omega$.\\
To show that $g_{u_1-u_2}=0$ $\mu-$a.e in $\Omega$, let $c\in\mathbb{R}$ and $u=\max\lbrace u_1,\min\lbrace u_2,c\rbrace\rbrace$. Then, $u\in N^{1,q}(\Omega)$. We observe that 
$0 \leq u-u_1 \leq \max\{u_1, u_2\} -u_1 = \max\{0, u_2-u_1\} \in N^{1,q}_0(\Omega),$ thus $u-u_1 \in N^{1,q}_0(\Omega)$. Let $V_c=\lbrace x\in\Omega : u_1(x)<c<u_2(x)\rbrace\subseteq\lbrace x\in\Omega: u(x)=c\rbrace$ and hence $g_{u}=0$ $\mu$-a.e. in $V_c$.
Using the $(p,q)$-minimizing definition for $u_1$ and since $g_u=g_{u_1}=g_{u_2}$ $\mu$-a.e. in $\Omega\setminus V_c$, then we get that
\begin{align*}
\int_\Omega a g_{u_1}^pd\mu+\int_\Omega b g_{u_1}^q d\mu&\leq \int_\Omega a g_{u}^pd\mu+\int_\Omega b g_{u}^q d\mu= \int_{\Omega\setminus V_c} ag_{u}^pd\mu+ \int_{\Omega\setminus V_c}b g_{u}^q d\mu\\
&=\int_{\Omega\setminus V_c}
ag_{u_1}^pd\mu+\int_{\Omega\setminus V_c}b g_{u_1}^q d\mu.
\end{align*}
Therefore, $g_{u_1}=g_{u_2}=0$ $\mu$-a.e. in $V_c$ for all $c\in\mathbb{R}$. Notice that in this step we use the assumption $a,b>0$.
Now, 
\begin{equation*}
\lbrace x\in\Omega: u_1(x)<u_2(x)\rbrace\subset \bigcup_{c\in\mathbb{Q}}V_c
\end{equation*}
and hence $g_{u_2}=g_{u_1}=0$ $\mu$-a.e. in $\lbrace x\in\Omega: u_1(x)<u_2(x)\rbrace$. Analogously $\lbrace x\in\Omega: u_2(x)<u_1(x)\rbrace$. It follows that 
$$
g_{u_1-u_2}\leq(g_{u_1}+g_{u_2})\chi_{\lbrace x\in\Omega: u_1(x)\neq u_2(x)\rbrace}=0
$$
$\mu$-a.e in $\Omega$. Then, the Poincar\'{e} inequality for $N^{1,q}_0(\Omega)$ (see Corollary \ref{RemarkPoincare}) yields $\|u_1-u_2\|_{L^q(\Omega)}=0$ and hence $u_1=u_2$ $\mu$-a.e. in $\Omega$.
\end{proof}

\begin{remark} \label{quasiunicity} Under the assumptions of Corollary \ref{unicitycor},  by Proposition 1.59 in \cite{BB} we get that there is a unique $(p,q)$-minimizer up to sets of capacity zero, that is $u_{1}=u_{2}$, $q$-q.e..
 \end{remark}

Now, if we assume that the function $u$  is continuous, we have the following definition (see Definition 7.7 in \cite{BB}).
\begin{definition}[$(p,q)$-harmonic function]\label{(p,q)-harmonic function}
If $u\in N^{1,q}_{loc}({\Omega})$ is a $(p,q)$-minimizer on $\Omega$ and $u$ is a continuous function, then we say that $u$ is a $(p,q)$-harmonic function.
\end{definition}

\begin{definition}[$(p,q)$-potential]\label{(p,q)-potential}
	If $S$ is a compact subset of a domain $\Omega$ in $X$, then by the
	$(p,q)$-potential for $S$ with respect to $\Omega$ we mean the function $u \in N^{1,q}(X)$ which is a $(p,q)$-minimizer in $\Omega \setminus S$, $u=1$ $q$-q.e. on $S$, and $u = 0$ $q$-q.e. on $X \setminus \Omega$.
\end{definition}

\section{Boundedness property }\label{Sec3}
This section is devoted to show that $(p,q)$-quasiminimizers are essentially locally bounded from above. We first give a De Giorgi type inequality which permits to use the De Giorgi method to give an estimate of the essential supremum of $(p,q)$-quasiminimizers involving their averaged norms in $L^q$, thus we deduce their local boundedness. This result has been proven in \cite{KS} for quasiminimizers of $p$-Dirichlet integrals.

From now on, we denote $S_{k, r}= \{x \in B(y,r) \cap {\Omega}: u(x)>k\},$ where $k \in \mathbb{R}$ and $r>0$.  Also, for every $y \in \Omega$, we define $R(y)=\frac{d(y,X\setminus \Omega)}{2}$.
\begin{lemma}[De Giorgi type inequality]\label{lem 4.1}
Let $u\in N^{1,q}_{loc}({\Omega})$ be a $(p,q)$-quasiminimizer.  If $0<\rho<R< R(y)$, then there exists $C$ such that the following De Giorgi type inequality 
\begin{equation}\label{5.4}
\int_{S_{k, \rho}}(ag_{u} ^p+bg_{u} ^q) d \mu	\leq C\left( \frac{1}{(R-\rho)^{p}} \int_{S_{k, R}} a(u-k) ^pd \mu+\frac{1}{(R-\rho)^{q}} \int_{S_{k, R}}b(u-k)^q d \mu \right),
\end{equation}
is satisfied. The constant $C$ depends on $K$, given by Definition \ref{qm}, and $p$.
\end{lemma}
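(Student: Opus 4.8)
The plan is to use the standard Caccioppoli/De Giorgi test-function argument, adapted to the $(p,q)$-setting. The key is to test the quasiminimizing inequality \eqref{min} with a competitor $v$ that equals $u$ outside a slightly smaller ball, equals $k$ on the level set where $u>k$ near the center, and interpolates using a Lipschitz cutoff.

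\emph{First}, I would fix $y$ and radii $0<\rho<R<R(y)$, and choose a cutoff function $\eta\in N^{1,q}(X)$ with $0\le\eta\le1$, $\eta=1$ on $B(y,\rho)$, $\eta=0$ outside $B(y,R)$, and $g_\eta\le C/(R-\rho)$ $\mu$-a.e. Then I set
\[
v=u-\eta(u-k)_+,
\]
where $(u-k)_+=\max\{u-k,0\}$. One checks $v\in N^{1,q}(B(y,R))$ and $u-v=\eta(u-k)_+\in N^{1,q}_0(B(y,R))$, so $v$ is an admissible competitor on $\Omega'=B(y,R)$ (after shrinking $R$ slightly so that $\overline{B(y,R)}\subset\Omega$; the general case follows by a limiting argument). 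On the set where $u\le k$ we have $v=u$, hence $g_v=g_u$ there; outside $S_{k,R}$ the left and right sides of \eqref{min} agree and cancel, so only the integrals over $S_{k,R}$ survive. On $S_{k,R}$, the Leibniz-type estimate for upper gradients of products gives
\[
g_v\le (1-\eta)g_u+(u-k)_+ g_\eta\le (1-\eta)g_u+\frac{C}{R-\rho}(u-k)_+ \quad \mu\text{-a.e.}
\]

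\emph{Next}, I plug this into \eqref{min}. On the left, since $\eta=1$ on $B(y,\rho)$, the left side dominates $\int_{S_{k,\rho}}(ag_u^p+bg_u^q)\,d\mu$ (recall $\alpha\le a,b\le\beta$). On the right, I use the elementary inequality $(x+y)^l\le 2^{l-1}(x^l+y^l)$ for $l=p,q$ together with $(1-\eta)^l\le 1-\eta$ and the bounds on $a,b$ to obtain
\[
\int_{S_{k,\rho}}(ag_u^p+bg_u^q)\,d\mu\le C K\!\!\int_{S_{k,R}\setminus B(y,\rho)}\!\!(ag_u^p+bg_u^q)\,d\mu + CK\!\!\left(\frac{1}{(R-\rho)^p}\!\int_{S_{k,R}}\!\!a(u-k)^p\,d\mu+\frac{1}{(R-\rho)^q}\!\int_{S_{k,R}}\!\!b(u-k)^q\,d\mu\right).
\]
Here I have split the gradient integral using $\int_{S_{k,R}}=\int_{S_{k,\rho}}+\int_{S_{k,R}\setminus B(y,\rho)}$ and absorbed the part over $S_{k,\rho}$ is \emph{not} yet possible because of the factor $CK>1$ in front; this is where the hole-filling (iteration) lemma enters.

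\emph{Finally}, the main technical obstacle: the term $CK\int_{S_{k,R}\setminus B(y,\rho)}(ag_u^p+bg_u^q)\,d\mu$ cannot be absorbed directly since $CK\ge1$. The standard fix is to add $CK\int_{S_{k,\rho}}(ag_u^p+bg_u^q)\,d\mu$ to both sides ("hole-filling"), yielding on the right the full integral $CK\int_{S_{k,R}}(ag_u^p+bg_u^q)\,d\mu$ with the comparatively smaller coefficient $\frac{CK}{CK+1}<1$ relative to the new left side $(CK+1)\int_{S_{k,\rho}}(\cdots)$. One then applies the classical iteration lemma (e.g. Giaquinta--Giusti, or \cite{BB}) to the nondecreasing function $\varphi(t)=\int_{S_{k,t}}(ag_u^p+bg_u^q)\,d\mu$ on $[\rho_0,R_0]$: since
\[
\varphi(\rho)\le\theta\,\varphi(R)+A(R-\rho)^{-q}+A(R-\rho)^{-p}\quad\text{for all }\rho_0\le\rho<R\le R_0
\]
with $\theta=\frac{CK}{CK+1}<1$, the lemma gives $\varphi(\rho)\le C(\theta,q)\big(A(R-\rho)^{-q}+A(R-\rho)^{-p}\big)$, which is exactly \eqref{5.4} with a constant depending only on $K$ and $p$ (and the fixed structural constants). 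I expect the bookkeeping of the two different exponents $p$ and $q$ in the iteration lemma — ensuring the $(R-\rho)^{-p}$ and $(R-\rho)^{-q}$ terms both survive with the stated constant dependence — to be the only genuinely delicate point; everything else is the routine De Giorgi scheme.
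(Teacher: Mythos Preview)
Your proposal is correct and follows essentially the same route as the paper: the same cutoff competitor $v=u-\eta(u-k)_+$, the same Leibniz-rule bound on $g_v$, the same hole-filling step (the paper adds $K2^{p}\int_{S_{k,\rho}}(ag_u^p+bg_u^q)\,d\mu$ to both sides, yielding $\theta=K2^{p}/(1+K2^{p})$), and the same two-exponent iteration lemma (the paper cites Lemma~6.1 of Giusti~\cite{G} with $\alpha=p$, $\beta=q$). The only cosmetic difference is that the paper tracks the explicit constant $K2^{p}$ where you write $CK$.
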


\begin{remark}
We note that inequality \eqref{5.4} is equivalent to the following
\begin{align*}
	\int_{B(y,\rho)}(ag_{u} ^p+bg_{u} ^q) d \mu	& \leq C \Bigg( \frac{1}{(R-\rho)^{p}} \int_{B(y,R)} a(u-k) ^p_+ d \mu \\  & \hspace{1.5cm}+\frac{1}{(R-\rho)^{q}} \int_{B(y,R)}b(u-k)^q_+ d \mu\Bigg),
\end{align*} where $(u-k)_+= \max\{u-k,0\}$.
\end{remark}
\begin{proof}[Proof of Lemma \ref{lem 4.1}]
Let $\tau$ be a $\dfrac{1}{R-\rho}$-Lipschitz cutoff function so that $0\leq \tau \leq1$, $\tau=1$ on $B(y, \rho)$ and the support of $\tau$ is contained in $B(y, R)$.
We consider 
\begin{equation}\label{v}
w=u- \tau (u-k)_+= \begin{cases}
(1-\tau)(u-k)+k\quad \mbox{ in } S_{k, R,}\\
u \quad \quad \quad\quad\quad\quad\quad\quad\mbox{ otherwise.}\\
\end{cases}
\end{equation}
Using Leibniz rule,
\begin{equation}\label{nablav}
g_{w}\leq (u-k)g_{\tau}+(1-\tau)g_u.
\end{equation}
We observe that	
\begin{equation}\label{nablavbis}
g_{w} ^l\leq 
2^{l-1}\left(g_{u} ^l(1-\chi_{S_{k, \rho}})+\left(\dfrac{u-k}{R-\rho}\right)^l\right) \quad \mbox{ in $S_{k, R}$, where $l=p,q$ } .
\end{equation}	
Since $u$ is a $(p,q)$-quasiminimizer, then 
\begin{align}\label{5.7}
\int_{S_{k, \rho}}(ag_{u} ^p+bg_{u} ^q) d \mu  &\leq\int_{S_{k, R}}(ag_{u} ^p+bg_{u} ^q) d \mu \leq K\int_{S_{k, R}}(ag_{w} ^p+bg_{w} ^q) d \mu \nonumber \\
& \leq \frac{K 2^{p-1}}{(R-\rho)^p}\int_{S_{k, R}}a(u-k) ^p \, d\mu+\frac{K 2^{q-1}}{(R-\rho)^q}\int_{S_{k, R}}b(u-k)^q d \mu \nonumber \\
&\quad +K 2^{p}\int_{S_{k, R}\setminus S_{k, \rho}}(g_{u} ^p+g_{u} ^q) d \mu.
\end{align}	
\noindent By adding $K 2^{p}\int_{S_{k, \rho}}(ag_{u} ^p+bg_{u} ^q) d \mu $ to both sides of (\ref{5.7}), we get 
\begin{align}\label{5.7bis}
(1&+K 2^{p}) \int_{S_{k, \rho}}(ag_{u} ^p+bg_{u} ^q) d \mu   \leq  \frac{K 2^{p-1}}{(R-\rho)^p}\int_{S_{k, R}}a(u-k) ^p\, d\mu \nonumber \\&
+\frac{K 2^{q-1}}{(R-\rho)^q}\int_{S_{k, R}}b(u-k)^q d \mu + K 2^{p}\int_{S_{k, R}}(ag_{u} ^p+bg_{u} ^q)d \mu.
\end{align}	
Let $\theta= \dfrac{K 2^{p}}{1+K 2^{p}}<1$, then 
\begin{align*}
\int_{S_{k, \rho}}(ag_{u} ^p+bg_{u} ^q)  d \mu  &\leq \frac{\theta}{2(R-\rho)^p}\int_{S_{k R}}a(u-k) ^p\, d\mu\\ &\quad+\frac{\theta}{2(R-\rho)^q}\int_{S_{k, R}}b(u-k)^q d \mu + \theta\int_{S_{k, R}}(ag_{u} ^p+bg_{u} ^q) d \mu.
\end{align*}

At this point we can use Lemma 6.1 of \cite{G}, with $\alpha=p$, $\beta=q$, $$Z(\rho)=\int_{S_{k, \rho}}(ag_{u} ^p+bg_{u} ^q)   d \mu, \quad A=  \frac{\theta}{2}\int_{S_{k, R}}a(u-k) ^pd \mu, \quad B= \frac{\theta}{2}\int_{S_{k, R}}b(u-k)^q d \mu,$$to get
\begin{align*}
	\int_{S_{k, \rho}}(ag_{u} ^p+bg_{u} ^q)   d \mu &	\leq C\Bigg(\frac{\theta }{2(R-\rho)^p}\int_{S_{k, R}}a(u-k) ^p\, d\mu\\ & \hspace{1.6cm} +\frac{\theta }{2(R-\rho)^q}\int_{S_{k, R}}b(u-k)^q d \mu\Bigg).
\end{align*}
That is
\begin{align*}
	\int_{S_{k, \rho}}(ag_{u} ^p+bg_{u} ^q) d \mu	&	\leq C \left( \frac{1}{(R-\rho)^{p}} \int_{S_{k, R}} a(u-k) ^pd \mu+\frac{1}{(R-\rho)^{q}} \int_{S_{k, R}}b(u-k)^q d \mu \right).
\end{align*}
\end{proof}

\begin{remark}\label{Proposition 3.3KS}
Let $\Omega$ be an open subset of $X$. Notice that if $u$ is a $(p,q)$-quasiminimizer then $-u$ is also a $(p,q)$-quasiminimizer. Therefore, by Lemma \ref{lem 4.1},  we get that $-u$ satisfies \eqref{5.4}.
\end{remark}

Now, suppose that $u$ satisfies \eqref{5.4}. Let $0<\frac{R}{2}<\rho<R\leq R(y)$ such that $B(y, R)\subset \Omega$. Then $g_{(u-k)_+}\leq g_u \chi_{S_{k, R}}$ in $B(y, R)$. We have
\begin{align}\label{ks2}
\int_{B\left(y, \frac{R+\rho}{2}\right)} (ag_{(u-k)_+}^p+bg_{(u-k)_+}^q) d \mu&\leq  C \Bigg (\dfrac{1}{(R-\rho)^{p}} \int_{B(y, R)} a(u-k)_+^p \, d\mu\nonumber\\ & \hspace{1.4cm}+ \dfrac{1}{(R-\rho)^{q}} \int_{B(y, R)} b(u-k)_+^q\, d\mu \Bigg).
\end{align}
Let $w=\tau (u-k)_+$. We have $$g_w \leq g_{(u-k)_+}\tau +(u-k)_+ g_{\tau}\leq g_{(u-k)_+}+ \frac{1}{R-\rho}(u-k)_+.$$
Inequality \eqref{ks2} and the boundedness assumption of $a$ and $b$ in $\Omega$ imply 
\begin{align*}
&\int_{B\left(y, \frac{R+\rho}{2}\right)} (ag_w^p + bg_w^q) d \mu\leq  2^{p-1}\int_{B\left(y, \frac{R+\rho}{2}\right)} (ag_{(u-k)_+}^p+ bg_{(u-k)_+}^q ) d \mu\\
&\quad+\frac{2^{p-1}}{(R-\rho)^p}\int_{B\left(y, \frac{R+\rho}{2}\right)} a(u-k)_+^p d \mu + \frac{2^{q-1}}{(R-\rho)^q} \int_{B\left(y, \frac{R+\rho}{2}\right)} b(u-k)_+^qd \mu\\
&\leq C \left(\frac{2^{q-1}}{(R-\rho)^{p}} \int_{B(y, R)} a (u-k)_+^pd \mu+\frac{ 2^{q-1}}{(R-\rho)^{q}} \int_{B(y, R)}b (u-k)_+^qd \mu \right) \\
&\quad+\frac{2^{q-1}}{(R-\rho)^{p}}\int_{B(y, R)}a (u-k)_+^p d \mu+\frac{2^{q-1}}{(R-\rho)^{q}}\int_{B(y, R)} b(u-k)_+^q	 d \mu\\
&=  C \left(\frac{1}{(R-\rho)^{p}} \int_{B(y, R)} a(u-k)_+^p d \mu+ \frac{1}{(R-\rho)^{q}} \int_{B(y, R)}b (u-k)_+^q\, d\mu \right)\\
&\leq C \left(\frac{1}{(R-\rho)^{p}} \int_{B(y, R)} \beta(u-k)_+^p d \mu+ \frac{1}{(R-\rho)^{q}} \int_{B(y, R)}\beta (u-k)_+^q\, d\mu \right)\\
&= C\left(\frac{1}{(R-\rho)^{p}} \int_{B(y, R)} (u-k)_+^p d \mu+ \frac{1}{(R-\rho)^{q}} \int_{B(y, R)}(u-k)_+^q\, d\mu \right)
\end{align*}
where $C$ depends on $\beta$. Therefore, 
\begin{equation}\label{smallcondition}
\int_{B\left(y, \frac{R+\rho}{2}\right)} (ag_w^p + bg_w^q) d \mu\leq 2C\max_{l=p,q}\Big\lbrace\frac{1}{(R-\rho)^{l}}\int_{B(y, R)} (u-k)_+^l d \mu\Big\rbrace
\end{equation}
\noindent By the definition of $w$, the $(1,p)$-Poincar\'e inequality, Lemma \ref{Lemma 2.1 KS} and H\"{o}lder inequality, there is $q<t<p^*$  such that

\begin{align}\label{ks40}
&\left(\dashint_{B(y, \rho)}(u-k)_+^t d\mu\right)^{\frac{l}{t}}= \left(\dashint_{B(y, \rho)}w^t d\mu\right)^{\frac{l}{t}}\nonumber \\
&\quad\leq \left(\dfrac{\mu\big(B\big(y, \frac{R+\rho}{2}\big)\big)}{\mu(B(y, \rho))}\right)^{\frac{l}{t}}\left(\dashint_{B\left(y, \frac{R+\rho}{2}\right)}w^t d\mu\right)^{\frac{l}{t}} \nonumber\\
&\quad\leq\left(\dfrac{\mu\big(B\big(y, \frac{R+\rho}{2}\big)\big)}{\mu(B(y, \rho))}\right)^{\frac{l}{t}} (CR)^l\left(\dashint_{B\left(y, \frac{R+\rho}{2}\right)}g_w^s d\mu\right)^{\frac{l}{s}}\nonumber\\
&\quad\leq\left(\dfrac{\mu\big(B\big(y, \frac{R+\rho}{2}\big)\big)}{\mu(B(y, \rho))}\right)^{\frac{l}{t}} (CR)^l\dashint_{B\left(y, \frac{R+\rho}{2}\right)}g_w^l d\mu\nonumber\\
&\quad\leq\left(\dfrac{\mu\big(B\big(y, \frac{R+\rho}{2}\big)\big)}{\mu(B(y, \rho))}\right)^{\frac{l}{t}} (CR)^l\dashint_{B\left(y, \frac{R+\rho}{2}\right)}(g_w^p+g_w^q) d\mu\nonumber\\
&\quad\leq\left(\dfrac{\mu\big(B\big(y, \frac{R+\rho}{2}\big)\big)}{\mu(B(y, \rho))}\right)^{\frac{l}{t}} CR^l\dashint_{B\left(y, \frac{R+\rho}{2}\right)}(ag_w^p+bg_w^q) d\mu,
\end{align}
where $l=p,q$. Notice that $C$ depends on $C_{PI}$ and $\alpha$. 

Using the doubling property, we can estimate the first term of the right-hand side of \eqref{ks40} in this way $$\left(\dfrac{\mu\big(B\big(y, \frac{R+\rho}{2}\big)\big)}{\mu(B(y, \rho))}\right)^{\frac{l}{t}}\leq C,$$
where $l=p,q$ and $C$ depends on the exponent $Q$ in \eqref{s}.

Using the latter estimate and \eqref{ks40}, we obtain
\begin{equation*}
	\left(\dashint_{B(y, \rho)}(u-k)_+^t d\mu\right)^{\frac{l}{t}}
	\leq CR^l\dashint_{B\left(y, \frac{R+\rho}{2}\right)}(ag_w^p+bg_w^q) d\mu,
\end{equation*}
for both $l=p,q$. Therefore,
\begin{equation}\label{ks40bis}
	\max_{l=p,q}\Big\lbrace\frac{1}{R^{l}}\left(\dashint_{B(y, \rho)}(u-k)_+^t d\mu\right)^{\frac{l}{t}}\Big\rbrace
	\leq C\dashint_{B\left(y, \frac{R+\rho}{2}\right)}(ag_w^p+bg_w^q) d\mu.
\end{equation}

Now, using \eqref{ks40bis} and then \eqref{smallcondition}, we get
\begin{align}\label{ks4}
\max_{l=p,q}\Big\lbrace\frac{1}{R^{l}}&\left(\dashint_{B(y, \rho)}(u-k)_+^t d\mu\right)^{\frac{l}{t}}\Big\rbrace\leq C\dashint_{B\left(y, \frac{R+\rho}{2}\right)}(ag_w^p+bg_w^q) d\mu \nonumber \\
&\leq \frac{C}{\mu(B(y, \frac{R+\rho}{2}))}   \max_{l=p,q}\Big\lbrace\frac{1}{(R-\rho)^{l}}\int_{B(y, R)} (u-k)_+^l d \mu\Big\rbrace \nonumber \\
&\leq  C\max_{l=p,q}\Big\lbrace\frac{1}{(R-\rho)^{l}} \dashint_{B(y, R)} (u-k)_+^l d \mu\Big\rbrace.
\end{align}
For both $l=p,q$, by H\"{o}lder inequality, we have
\begin{align*}
\frac{1}{R^{l}}\dashint_{B(y, \rho)}(u-k)_+^l d\mu &\leq\frac{1}{R^{l}} \left (\dashint_{B(y, \rho)}(u-k)_+^t d\mu\right)^{\frac{l}{t}}\left(\dfrac{\mu(S_{k, \rho})}{\mu(B(y, \rho)}\right)^{1-\frac{l}{t}}\\
&\leq \frac{1}{R^{l}}\left (\dashint_{B(y, \rho)}(u-k)_+^t d\mu\right)^{\frac{l}{t}}\left(\dfrac{\mu(S_{k, \rho})}{\mu(B(y, \rho)}\right)^{1-\frac{p}{t}}\\
&\leq\max_{l=p,q}\Big\lbrace\frac{1}{R^{l}}\left (\dashint_{B(y, \rho)} (u-k)_+^td \mu\right)^{\frac{l}{t}}\Big\rbrace \left(\dfrac{\mu(S_{k, \rho})}{\mu(B(y, \rho)}\right)^{1-\frac{p}{t}}.
\end{align*}
Now, using \eqref{ks4}, we obtain
\begin{equation}\label{ks6}
\frac{1}{R^{l}}\dashint_{B(y, \rho)}(u-k)_+^l d\mu \leq C\max_{l=p,q}\Big\lbrace\frac{1}{(R-\rho)^{l}} \dashint_{B(y, R)} (u-k)_+^l d \mu\Big\rbrace  \left(\dfrac{\mu(S_{k, \rho})}{\mu(B(y, \rho)}\right)^{1-\frac{p}{t}},
\end{equation}
and this holds for both $l=p$ and $l=q$. \\
Let $h<k$. Then, for $l=p,q$,
\begin{align}\label{ks7}
(k-h)^{l}\mu(S_{k, \rho})&= \int_{S_{k, \rho}}(k-h)^l d\mu\leq \int_{S_{k, \rho}}(u-h)^l d\mu\nonumber\\
&\leq\int_{S_{h, \rho}}(u-h)^l d\mu=\int_{B(y,\rho)}(u-h)_{+}^{l}d\mu.
\end{align}
For $l=p,q$, we define
$$u_{l}(h,\rho)=\left(\dashint_{B(y, \rho)}(u-h)_+^l d\mu\right)^{\frac{1}{l}}.$$
By \eqref{ks7} we get
\begin{equation*}
\mu(S_{k, \rho})\leq \dfrac{\mu(B(y, \rho))}{(k-h)^{l}}u_{l}(h,\rho)^{l}\leq \dfrac{\mu(B(y, R))}{(k-h)^{l}}u_{l}(h,R)^{l}.
\end{equation*}
with $l=p,q$. As a consequence, we observe that
\begin{equation}\label{ipq}
\mu(S_{k, \rho})^{\left(\frac{1}{l}-\frac{p}{lt}\right)}
\leq  \mu(B(y, R))^{\left(\frac{1}{l}-\frac{p}{lt}\right)} (k-h)^{-\theta}u_l(h,R)^{\theta},
\end{equation} where $\theta=1-\frac{p}{t}>0$ and both $l=p,q$.\\
\noindent
Now, if $\displaystyle\max_{l=p,q}\Big\lbrace\frac{1}{(R-\rho)^{l}} \dashint_{B(y, R)} (u-k)_+^l d \mu\Big\rbrace =\frac{1}{(R-\rho)^{p}} \dashint_{B(y, R)} (u-k)_+^p d \mu$, then by \eqref{ks6}, the doubling property and \eqref{ipq}, the following inequalities hold
\begin{align*}
u_p(k, \rho)&\leq \dfrac{CR}{R- \rho}u_{p}(k,R)\left(\dfrac{\mu(S_{k, \rho})}{\mu(B(y, \rho))}\right)^{\frac{1}{p}-\frac{1}{t}} \nonumber\\
&\leq \dfrac{CR}{R- \rho}u_{p}(k,R)\left(\dfrac{\mu(B(y,R))}{\mu(B(y, \rho))}\right)^{\frac{1}{p}-\frac{1}{t}}(k-h)^{-\theta}u_{p}(h, R)^{\theta}\nonumber\\
&\leq \dfrac{CR}{R- \rho}(k-h)^{-\theta}u_p(h,R)^{1+\theta}.
\end{align*}
Analogously, if $\displaystyle\max_{l=p,q}\Big\lbrace\frac{1}{(R-\rho)^{l}} \dashint_{B(y, R)} (u-k)_+^l d \mu\Big\rbrace =\frac{1}{(R-\rho)^{q}} \dashint_{B(y, R)} (u-k)_+^q d \mu$, proceeding in the same way as above, we get
\begin{equation*}
u_q(k, \rho)\leq \dfrac{CR}{R- \rho}(k-h)^{-\theta}u_q(h,R)^{1+\theta},
\end{equation*}

Therefore, we obtain
\begin{equation}\label{ks8}
u_l(k, \rho)\leq \dfrac{CR}{R- \rho}(k-h)^{-\theta}u_l(h,R)^{1+\theta},
\end{equation}
for either $l=p$ or $l=q$, where $C$ depends on $\alpha,\beta, C_{PI}$, $Q$ and $\theta=1-\frac{p}{t}$. 
At this point, using inequality \eqref{ks8}, we obtain the next lemma.

\begin{lemma}\label{Proposition 4.1 KS}
For all $k_0\in \mathbb{R}$ we have that $$u_p\left(k_0+d, \frac{R}{2}\right)=0,$$ where \begin{equation}\label{ksd}
d=(C\, 2^{2+\theta+\frac{1+\theta}{\theta}})^{\frac{1}{\theta}}u_q(k_0, R),
\end{equation} with $\theta$ and $C$ given as in \eqref{ks8}.
\end{lemma}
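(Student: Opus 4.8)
The plan is to run the standard De Giorgi iteration on the quantity $u_l$ using the single scalar inequality \eqref{ks8}. Fix $k_0 \in \mathbb{R}$ and, for $j = 0, 1, 2, \dots$, set the level and radius sequences
\begin{equation*}
k_j = k_0 + d(1 - 2^{-j}), \qquad \rho_j = \frac{R}{2}\left(1 + 2^{-j}\right),
\end{equation*}
so that $k_j \nearrow k_0 + d$, $\rho_j \searrow R/2$, $k_{j+1} - k_j = d\,2^{-(j+1)}$, and $\rho_j - \rho_{j+1} = R\,2^{-(j+2)}$, whence $R/(\rho_j - \rho_{j+1}) = 2^{j+2}$. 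First I would apply \eqref{ks8} with $k = k_{j+1}$, $\rho = \rho_{j+1}$, $h = k_j$, $R$ replaced by $\rho_j$ (this is legitimate since $R/2 < \rho_{j+1} < \rho_j \le R \le R(y)$), which yields, for the relevant exponent $l \in \{p,q\}$,
\begin{equation*}
u_l(k_{j+1}, \rho_{j+1}) \le C\,2^{j+2}\,(d\,2^{-(j+1)})^{-\theta}\,u_l(k_j, \rho_j)^{1+\theta} = C\,2^{2+\theta}\,d^{-\theta}\,2^{j(1+\theta)}\,u_l(k_j, \rho_j)^{1+\theta}.
\end{equation*}

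Next I would set $Y_j = u_l(k_j, \rho_j)$ and invoke the standard fast-geometric-convergence lemma (the discrete iteration lemma of De Giorgi): if $Y_{j+1} \le A\,b^{j}\,Y_j^{1+\theta}$ with $A, b > 0$, $\theta > 0$, and if $Y_0 \le A^{-1/\theta} b^{-1/\theta^2}$, then $Y_j \to 0$. Here $A = C\,2^{2+\theta} d^{-\theta}$ and $b = 2^{1+\theta}$, so the smallness requirement $Y_0 \le A^{-1/\theta} b^{-1/\theta^2}$ becomes
\begin{equation*}
u_l(k_0, R) = Y_0 \le \left(C\,2^{2+\theta}\right)^{-1/\theta} d\,\cdot\,2^{-(1+\theta)/\theta^2},
\end{equation*}
i.e. $d \ge \left(C\,2^{2+\theta}\right)^{1/\theta} 2^{(1+\theta)/\theta^2}\,u_l(k_0, R)$. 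Choosing $d$ as in \eqref{ksd} — noting $2 + \theta + \frac{1+\theta}{\theta} \ge 2 + \theta + \frac{1+\theta}{\theta^2}$ is not quite the bookkeeping one wants, so I would instead just check that the stated exponent $2 + \theta + \frac{1+\theta}{\theta}$ dominates what the iteration lemma demands after absorbing the $1/\theta$ power, using $\theta \le 1$ — makes $Y_0$ small enough, hence $Y_j = u_l(k_j, \rho_j) \to 0$. Since $u_p(k_0 + d, R/2) \le \liminf_j u_l(k_j, \rho_j) = 0$ by monotone convergence (the integrands $(u - k_j)_+^l$ decrease to $(u - (k_0+d))_+^l$ on the shrinking balls, all containing $B(y, R/2)$), we conclude $u_p(k_0 + d, R/2) = 0$.

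The one genuinely delicate point is that \eqref{ks8} holds for \emph{either} $l = p$ \emph{or} $l = q$ at each radius pair, and a priori the "active" exponent could switch from step to step of the iteration. The clean way around this is to observe that \eqref{ks6} together with \eqref{ks7} actually gives the recursion for whichever $l$ realizes the maximum $\max_{l=p,q}\{(R-\rho)^{-l}\dashint_{B(y,R)}(u-k)_+^l\,d\mu\}$, and one can run the argument for $l = q$ throughout (since $q$ controls $p$ on sets of finite measure after Hölder, or directly because the original estimate \eqref{ks4} is stated with the $\max$ on both sides), obtaining $u_q(k_0 + d, R/2) = 0$ first; then $(u - (k_0+d))_+ = 0$ $\mu$-a.e. on $B(y, R/2)$, so in particular $u_p(k_0 + d, R/2) = 0$ as well. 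I would therefore carry out the iteration with the exponent $q$ fixed throughout, using that $u_q(k_0, R)$ appears in \eqref{ksd}, and then deduce the $p$-statement for free since the level set $S_{k_0+d, R/2}$ has been shown to be $\mu$-null.
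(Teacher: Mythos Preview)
Your approach is essentially identical to the paper's: the same level and radius sequences, the same recursion obtained from \eqref{ks8}, and the same conclusion via $u_p \le u_l$ at the end. Where the paper carries out an explicit induction showing $u_l(k_n,\rho_n)\le 2^{-\tau n}u_q(k_0,R)$ with $\tau=(1+\theta)/\theta$, you invoke the equivalent abstract iteration lemma; these are the same argument.

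Two remarks. First, your hesitation about the constant is unnecessary: with $A=C\,2^{2+\theta}d^{-\theta}$ and $b=2^{1+\theta}$, the smallness condition $Y_0\le A^{-1/\theta}b^{-1/\theta^2}$ reads $u_q(k_0,R)^\theta \le C^{-1}2^{-(2+\theta)}d^{\theta}\,2^{-(1+\theta)/\theta}$, i.e.\ $d^\theta \ge C\,2^{2+\theta+(1+\theta)/\theta}u_q(k_0,R)^\theta$, which is exactly \eqref{ksd}. No appeal to $\theta\le 1$ is needed. Second, on the switching of the active exponent $l$: the paper handles this in precisely the way you describe at the end---it proves the induction bound for whichever $l$ is selected by \eqref{ks8} at each step, initializes with $u_l(k_0,R)\le u_q(k_0,R)$ (trivial for both $l$ by H\"older), and then concludes $u_p(k_0+d,R/2)\le u_l(k_n,\rho_n)\to 0$. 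Your proposed alternative of fixing $l=q$ throughout and using \eqref{ks4} with the $\max$ on both sides is a reasonable variant, but it is not what the paper does and it would require reworking \eqref{ks6}--\eqref{ks8}; your final paragraph's first route (follow the active $l$, then use $u_p\le u_l$) is exactly the paper's argument.
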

\begin{proof}
For $n\in \mathbb{N}\cup \{0\}$, let $\rho_{n}=\frac{R}{2}\left(1+\frac{1}{2^{n}}\right)\leq R$ and $k_{n}=k_{0}+d\left(1-\frac{1}{2^n}\right)$, where $d>0$ will be chosen later. Then $\rho_{0}=R$, $\rho_{n}\searrow \frac{R}{2}$ and $k_n\nearrow k_0+d$.

We now show by induction that with a suitable $d$
\begin{equation}\label{induction}
u_l(k_n,\rho_n)\leq 2^{-\tau n}u_q(k_0, R)
\end{equation}
with $\tau=\frac{1+\theta}{\theta}$ and for either $l=p$ or $l=q$ (this depends on whether \eqref{ks8} holds for $l=p$ or $l=q$). Therefore, independently of the case, we would have
$$
0\leq u_p\left(k_0+d,\frac{R}{2}\right)\leq u_l\left(k_0+d, \frac{R}{2}\right)\leq u_l(k_n, \rho_n)\leq 2^{-\tau n}u_q(k_0, R)\rightarrow 0
$$
as $n\rightarrow +\infty.$\\
Indeed, \eqref{induction} is trivially true for $n=0$. Now assuming \eqref{induction} for $n\geq 0$, by \eqref{ks8} with $R=\rho_n$, $\rho=\rho_{n+1}$, $k=k_{n+1}$ and $h=k_n$, we get
\begin{align*}
u_l(k_{n+1}, \rho_{n+1})&\leq C\left(\frac{\rho_n}{\rho_n-\rho_{n+1}}\right)(k_{n+1}-k_n)^{-\theta}u_l(k_n, \rho_n)^{1+\theta}\\
&\leq C\left(\frac{R}{\rho_n-\rho_{n+1}}\right)(k_{n+1}-k_n)^{-\theta}\left(2^{-\tau n}u_q(k_0, R)\right)^{1+\theta}\\
&\leq\frac{2^{n+2} C}{(2^{-n-1}d)^{\theta}}\left(2^{-\tau n}u_q(k_0, R)\right)^{1+\theta}=2^{-\tau (n+1)}u_q(k_0, R)
\end{align*}
provided that $d^{\theta}=C 2^{2+\theta+\tau}u_q(k_0, R)^{\theta}$. Therefore, $u_p\left(k_0+d, \frac{R}{2}\right)=0$ as wanted. 
\end{proof}

We give the following weak Harnack inequality Theorem, which states that a function satisfying the De Giorgi type inequality \eqref{5.4} is essentially locally bounded from above. 
\begin{theorem}[Weak Harnack inequality]\label{Theorem 4.2}
We consider an open subset $\Omega \subset X$ such that $B(y, R) \subset \Omega$, with $0 < R \leq R(y)$, and $k_0 \in \mathbb{R}$. If $u$ satisfies \eqref{5.4}, then there exists $C>0$ such
that $$\esssup_{B\left(y, \frac{R}{2}\right)} u \leq k_0 + C \left(\dashint_{B(y, R)}(u-k_0)_+^q \, d\mu\right)^{\frac{1}{q}}.$$
The constant $C=C(\alpha, \beta, C_{PI}, Q)$ is independent of the ball $B(y, R)$.
\end{theorem}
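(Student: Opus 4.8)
The plan is to run a De Giorgi iteration, using Lemma \ref{Proposition 4.1 KS} as the engine. The crucial observation is that Lemma \ref{Proposition 4.1 KS} already does almost all the work: it says that $u_p(k_0+d,\tfrac{R}{2})=0$ with the explicit choice $d=(C\,2^{2+\theta+\frac{1+\theta}{\theta}})^{1/\theta}u_q(k_0,R)$. Unraveling the definition of $u_p$, the identity $u_p(k_0+d,\tfrac{R}{2})=0$ means precisely that $(u-(k_0+d))_+=0$ $\mu$-a.e. on $B(y,\tfrac{R}{2})$, i.e. $u\le k_0+d$ $\mu$-a.e. on $B(y,\tfrac{R}{2})$. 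Hence $\esssup_{B(y,R/2)}u\le k_0+d$. It then remains only to rewrite $d$ in the asserted form. Since $u_q(k_0,R)=\bigl(\dashint_{B(y,R)}(u-k_0)_+^q\,d\mu\bigr)^{1/q}$ by definition, and the factor $(C\,2^{2+\theta+\frac{1+\theta}{\theta}})^{1/\theta}$ is a constant depending only on $\theta$ and the constant $C$ from \eqref{ks8}, which in turn depends only on $\alpha,\beta,C_{PI},Q$ (and $\theta$ itself depends only on $p,t$, hence ultimately on the structural data), the whole prefactor is a constant $C=C(\alpha,\beta,C_{PI},Q)$ independent of the ball. This yields
\[
\esssup_{B(y,R/2)} u \;\le\; k_0 + C\left(\dashint_{B(y,R)}(u-k_0)_+^q\,d\mu\right)^{1/q},
\]
which is the claim.

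First I would recall that the hypothesis of the theorem, namely that $u$ satisfies \eqref{5.4}, is exactly what was assumed in the chain of estimates \eqref{ks2}--\eqref{ks8} leading to Lemma \ref{Proposition 4.1 KS}; in particular \eqref{ks8} holds, for either $l=p$ or $l=q$, with a constant $C$ depending on $\alpha,\beta,C_{PI},Q,\theta$, and $0<\tfrac{R}{2}<\rho<R\le R(y)$ with $B(y,R)\subset\Omega$ as required. So Lemma \ref{Proposition 4.1 KS} applies verbatim. I would then state the measure-theoretic translation of $u_p(k_0+d,\tfrac{R}{2})=0$: since
\[
0=u_p\!\left(k_0+d,\tfrac{R}{2}\right)^p=\dashint_{B(y,R/2)}\bigl(u-(k_0+d)\bigr)_+^p\,d\mu,
\]
the nonnegative integrand vanishes $\mu$-a.e., so $(u-(k_0+d))_+=0$ $\mu$-a.e. on $B(y,\tfrac{R}{2})$, which is the definition of $\esssup_{B(y,R/2)}u\le k_0+d$.

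Finally I would track the constant. With $\theta=1-\tfrac pt>0$ fixed (where $q<t<p^*$ is as in \eqref{ks40}, depending only on the Poincaré data), set $C_0:=(C\,2^{2+\theta+\frac{1+\theta}{\theta}})^{1/\theta}$, where $C$ is the constant of \eqref{ks8}. Then $d=C_0\,u_q(k_0,R)=C_0\bigl(\dashint_{B(y,R)}(u-k_0)_+^q\,d\mu\bigr)^{1/q}$, and $C_0=C_0(\alpha,\beta,C_{PI},Q)$ is independent of $B(y,R)$, $y$, $R$ and $k_0$. Substituting completes the proof. I do not expect a genuine obstacle here; the only point requiring a little care is the bookkeeping of the constant's dependencies — confirming that nothing in the derivation of \eqref{ks8} or in the choice of $t$ and $\theta$ secretly depends on the ball or on $k_0$ — but this is routine given that every estimate in the chain \eqref{ks2}--\eqref{ks8} was scale-invariant by design (the powers of $R$ and $R-\rho$ were inserted precisely to achieve this).
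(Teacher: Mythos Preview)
Your proposal is correct and follows essentially the same approach as the paper: apply Lemma~\ref{Proposition 4.1 KS} to get $u_p(k_0+d,R/2)=0$, deduce $u\le k_0+d$ $\mu$-a.e.\ on $B(y,R/2)$, and rewrite $d$ in terms of $u_q(k_0,R)$. Your version is somewhat more detailed in tracking the constant's dependencies, but the argument is the same.
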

\begin{proof}
The proof can be easily deduced from the previous lemma. In fact, Lemma \ref{Proposition 4.1 KS} implies that $$u_p\left(k_0 + d, \frac{R}{2}\right) = 0,$$with $d$ given by \eqref{ksd}. Consequently, $u\leq k_0+d$ almost everywhere in $B(y, \frac{R}{2})$, meaning
$$\esssup_{B(y, \frac{R}{2})}u \leq k_0 +d= k_0+ C \left(\dashint_{B(y, R)}(u-k_0)_+^q \, d\mu\right)^{\frac{1}{q}}.$$

\end{proof}
\section{H\"{o}lder continuity}\label{Sec4}
In this section we prove a locally H\"{o}lder continuity result for $(p,q)$-quasiminimizers.
We first note that if $u$ is a $(p,q)$-quasiminimizer in $\Omega$, then by Lebesgue's differentiation theorem, there exists a function
\begin{equation}\label{u tilde}
	\tilde{u}(x)= \limsup_{\rho \to 0} \dashint_{B(x,\rho)}u \, d\mu
\end{equation}
which is equal to $u$ $\mu$-a.e on $\Omega$.\\
\noindent If $B(y, \rho) \subset \Omega$, we define
$m(\rho) = \essinf_{B(y,\rho)}u$ and $M(\rho) = \esssup_{B(y,\rho)}u.$
 From the definition of $\tilde{u}$ (see \cite{BB}, p. 202), we have that $$m(\rho)= \inf_{B(y,\rho)} \tilde{u}  \quad \mbox {and}\quad M(\rho) = \sup_{B(y,\rho)} \tilde{u}.$$
The main theorem of this section implies that the class of $(p,q)$-harmonic functions is nontrivial and that every $(p,q)$-harmonic function is locally H\"{o}lder continuous in the domain of its $(p,q)$-harmonicity.
\begin{theorem}[Local H\"{o}lder continuity]\label{Theorem 5.2}
	Let $u$ be a $(p,q)$-quasiminimizer, $\tilde{u}$ defined by \eqref{u tilde} and $0 < \rho < R <\frac{R(y)}{\lambda'}$ with $B(y, 2\lambda' R) \subset \Omega$. Then there exists $0 < \eta < 1$ such that
	$$\osc(\tilde{u},B(y, \lambda' \rho)) \leq 4^\eta
	\left(\frac{\rho }{R}\right)^\eta \osc(\tilde{u},B(y, \lambda' R)),$$
	where $\osc(\tilde{u},B(y, \cdot)) = \sup_{B(y,\cdot)}\tilde{u}- \inf_{B(y,\cdot)}\tilde{u}$ is the oscillation of $\tilde{u}$. In particular, $\tilde{u}$ is locally H\"{o}lder continuous on $\Omega$, that is $u$ can be modified on a set of capacity zero so that it becomes locally H\"{o}lder continuous on $\Omega$.
\end{theorem}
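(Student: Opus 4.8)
The plan is to derive the oscillation decay from the two one-sided weak Harnack estimates (Theorem \ref{Theorem 4.2}), following the classical De Giorgi iteration argument adapted to the $(p,q)$ setting. The starting observation is that both $u$ and $-u$ satisfy the De Giorgi type inequality \eqref{5.4} (Remark \ref{Proposition 3.3KS}), so Theorem \ref{Theorem 4.2} can be applied to both. Fix a ball $B(y,R)$ with the stated geometric constraints and work on the intermediate radius $\tfrac12 R$ (or, more conveniently, on dyadic rescalings). Writing $M(r)=\sup_{B(y,\lambda' r)}\tilde u$ and $m(r)=\inf_{B(y,\lambda' r)}\tilde u$, the aim is to show a contraction of the form $\osc(\tilde u,B(y,\lambda' R/2))\le \gamma\,\osc(\tilde u,B(y,\lambda' R))$ for some fixed $\gamma\in(0,1)$ depending only on the structural constants, and then iterate on the dyadic scales $R/2^j$ and interpolate to arbitrary $\rho<R$ to get the H\"older-type bound with the explicit constant $4^\eta(\rho/R)^\eta$.

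The heart of the matter is the single-step contraction. Apply Theorem \ref{Theorem 4.2} to $u$ with $k_0 = \tfrac12(M(R)+m(R))$, the midpoint of the oscillation: this gives
\begin{equation*}
M(R/2) - k_0 \le C\left(\dashint_{B(y,\lambda' R)}(u-k_0)_+^q\,d\mu\right)^{1/q}.
\end{equation*}
The key point is that on the set where $u>k_0$ one also has $u-k_0\le M(R)-k_0=\tfrac12\osc(\tilde u,B(y,\lambda' R))$, so the right-hand integrand is controlled by $\bigl(\tfrac12\osc\bigr)^q$ times the \emph{density} of the set $\{u>k_0\}$. One then needs the dichotomy: either $\mu(\{u>k_0\}\cap B(y,\lambda'R))\le \tfrac12\mu(B(y,\lambda'R))$, in which case applying Theorem \ref{Theorem 4.2} to $u$ yields $M(R/2)-k_0\le C 2^{-1/q}\osc$; or the complementary set $\{u<k_0\}$ has density at most $\tfrac12$, in which case applying Theorem \ref{Theorem 4.2} to $-u$ with level $-k_0$ yields the symmetric bound $k_0 - m(R/2)\le C 2^{-1/q}\osc$. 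Either way one controls one of $M(R/2)-k_0$ or $k_0-m(R/2)$ by $\theta_0\,\osc(\tilde u,B(y,\lambda'R))$ with $\theta_0<\tfrac12$ possibly after adjusting constants (using that the measure density enters with a positive power $1-p/t$, as in \eqref{ks6}, so the smallness is quantitative). Combining with the trivial bound $M(R/2)-m(R/2)\le \osc(\tilde u,B(y,\lambda'R))$ on the other half, one gets $\osc(\tilde u,B(y,\lambda'R/2))\le(\tfrac12+\theta_0)\osc(\tilde u,B(y,\lambda'R)) =: \gamma\,\osc(\tilde u,B(y,\lambda'R))$ with $\gamma<1$.

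Once the one-step contraction with ratio $\gamma$ across halving of the radius is established, a standard iteration lemma (of the type used for Lemma \ref{Proposition 4.1 KS}, or an elementary induction on $j$ with $R_j = R/2^j$) gives $\osc(\tilde u,B(y,\lambda' R/2^j))\le \gamma^j\,\osc(\tilde u,B(y,\lambda' R))$. Choosing $\eta$ so that $\gamma=2^{-\eta}$, i.e. $\eta = \log_2(1/\gamma)\in(0,1)$ (shrinking $\eta$ if necessary), and for general $\rho\le R$ picking $j$ with $R/2^{j+1}<\rho\le R/2^{j}$, one has $(\rho/R)^\eta > 2^{-(j+1)\eta}=\tfrac12\gamma^{j+1}$ so $\gamma^j = \gamma^{-1}\cdot 2\cdot \tfrac12\gamma^{j+1} < 2\gamma^{-1}(\rho/R)^\eta \le 4(\rho/R)^\eta$ after absorbing $\gamma^{-1}\le 2$, which produces the stated $4^\eta(\rho/R)^\eta$ form (up to harmless adjustment of the exponent). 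Finally, the H\"older continuity of $\tilde u$ follows since the oscillation estimate shows $\tilde u$ has a limit at every point and $|\tilde u(x)-\tilde u(y)|\lesssim d(x,y)^\eta$ locally; since $\tilde u = u$ $\mu$-a.e., $u$ coincides $q$-q.e. with its continuous representative, so $u$ can be redefined on a set of capacity zero to be locally H\"older continuous.

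\medskip

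\noindent\textbf{Main obstacle.} The delicate point is keeping the measure-density argument quantitative in the $(p,q)$ framework: unlike the pure $p$-case, the right-hand side of \eqref{5.4} mixes $p$- and $q$-powers of $(u-k)_+$, and the density enters through the exponent $1-p/t$ coming from H\"older's inequality (as in \eqref{ks6}--\eqref{ipq}). One must check that the dichotomy still yields a \emph{uniform} gain $\theta_0<\tfrac12$ independent of the scale — in particular that the constant $C=C(\alpha,\beta,C_{PI},Q)$ from Theorem \ref{Theorem 4.2}, which already absorbs the coefficient bounds, does not degenerate under the iteration. A secondary technical nuisance is bookkeeping the dilation factor $\lambda'$ consistently through the dyadic chain so that the nested balls $B(y,2\lambda' R/2^j)$ all remain inside $\Omega$, which is guaranteed by the hypothesis $R<R(y)/\lambda'$ together with $B(y,2\lambda'R)\subset\Omega$.
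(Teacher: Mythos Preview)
Your overall strategy is the right one and matches the paper's approach in spirit, but there is a genuine gap in the single-step contraction. When you apply Theorem~\ref{Theorem 4.2} at the midpoint level $k_0$ and use the density bound $\mu(\{u>k_0\}\cap B)\le \tfrac12\mu(B)$, you obtain
\[
M(R/2)-k_0 \;\le\; C\Bigl(\tfrac12\Bigr)^{1/q}\cdot\tfrac12\,\osc(\tilde u,B(y,\lambda'R)),
\]
and for the combination $(\tfrac12+\theta_0)$ to be strictly less than $1$ you need $C\cdot 2^{-1/q}<1$. But the constant $C=C(\alpha,\beta,C_{PI},Q)$ in Theorem~\ref{Theorem 4.2} comes out of the De Giorgi iteration in Lemma~\ref{Proposition 4.1 KS} and is large; nothing forces it below $2^{1/q}$. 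Your parenthetical remark that ``the density enters with a positive power $1-p/t$'' does not help: a fixed density of $\tfrac12$ raised to a fixed positive power is still just a number bounded away from zero, not something that can absorb an arbitrary structural constant. You correctly flag this as the main obstacle at the end, but you do not actually overcome it.

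The paper closes this gap with an additional measure-decay step (Lemma~\ref{Proposition 5.1} and Remark~\ref{indipendence of n}): starting from the density-$\tfrac12$ assumption at the midpoint $k_0$, one raises the level to $k_n=M-2^{-n-1}(M-m)$ and shows, via inequality~\eqref{twocases}, that $\mu(S_{k_n,R})/\mu(B(y,R))\le Cn^{-\xi}\to 0$ \emph{uniformly} in $u$ and the ball. One then chooses $n$ (depending only on structural data) so large that $C\bigl(\mu(S_{k_n,R})/\mu(B(y,R))\bigr)^{1/q}<\tfrac12$, and only then applies Theorem~\ref{Theorem 4.2} at level $k_n$. This yields $M(\lambda'R/2)\le M-2^{-n-2}(M-m)$, hence the contraction $\osc(\tilde u,B(y,\lambda'R/2))\le(1-2^{-n-2})\,\osc(\tilde u,B(y,2\lambda'R))$ with a factor-$4$ scale ratio. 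The subsequent dyadic iteration and interpolation to general $\rho$ are as you describe, with $\eta=-\log\tau/\log 4$. Without this intermediate level-raising lemma your argument does not produce a contraction.
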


Before proving Theorem \ref{Theorem 5.2}, we need to premise some auxiliary results. 

Now, suppose that $u$ satisfies the De Giorgi type inequality \eqref{5.4}. Let $0 < \rho < R <\frac{R(y)}{\lambda'}$ be such
that $B(y, 2\lambda'R)\subset \Omega$, where $\lambda'$ is as in \eqref{(2.8)}. 
We assume that $h>0$ is such that
\begin{equation} \label{4.0ineq}
	\mu(S_{h, R}) \leq \gamma \mu(B(y, R)) \quad \mbox{for some $\gamma\in (0,1)$.}
\end{equation}

For $k> h$, we define
\begin{equation}\label{v(x)}
	v = \min\{u, k\} - \min\{u, h\}.
\end{equation} note that, by \eqref{v(x)}, we have
\begin{equation} \label{4.0bisineq}
	v=\begin{cases}
		0 \hspace{1.98cm} \mbox{if $u\leq h<k$,}\\
		u-h \hspace{1.3cm} \mbox{if $h<u<k$,}\\
		k-h \hspace{1.3cm} \mbox{if $u\geq k>h$.}
	\end{cases}
\end{equation}
We remark that $v \in N^{1,q}_{loc}(\Omega)$, because $u \in N^{1,q}_{loc}(\Omega)$.
From \eqref{4.0ineq} and \eqref{4.0bisineq}, we have that $\mu(\{x \in B(y, R): v(x) > 0\}) \leq \gamma\mu(B(y, R)).$ Using H\"{o}lder inequality and Lemma \ref{Lemma 2.1 KS} with $t = s<p<q$, we obtain
\begin{align*}
	(k&-h) \mu(S_{k,R})= \int_{S_{k,R}} v\,d\mu \leq \int_{B(y,R)}v \,d\mu \nonumber\\
	&  \leq \mu(B(y,R))^{1-\frac{1}{s}}  \left(\int_{B(y, R)}v^s \,d\mu \right)^{\frac{1}{s}} \nonumber\\
	& \leq C R \, \dfrac{\mu(B(y,R))}{(\mu(B(y,\lambda' R)))^{\frac{1}{s}}} \left(\int_{B(y,\lambda'  R)}g_v^s \,d\mu \right)^{\frac{1}{s}} \nonumber\\
	& \leq CR \, \mu(B(y,R))^{1-\frac{1}{s}} \left(\int_{B(y, \lambda' R)}g_v^s \,d\mu \right)^{\frac{1}{s}}\nonumber\\
	&=CR \, \mu(B(y,R))^{1-\frac{1}{s}} \left(\int_{S_{h, \lambda' R}\setminus S_{k, \lambda' R}}g_u^s \,d\mu \right)^{\frac{1}{s}}\nonumber\\
	& \leq  C R \,\mu(B(y,R))^{1-\frac{1}{s}}\Big(\mu(S_{h, \lambda' R})-\mu(S_{k, \lambda' R})\Big)^{\frac{1}{s}-\frac{1}{l}} \left(\int_{S_{h, \lambda' R}}g_u^l \,d\mu \right)^{\frac{1}{l}}\nonumber\\
	& \leq CR \, \mu(B(y,R))^{1-\frac{1}{s}}\Big(\mu(S_{h, \lambda' R})-\mu(S_{k, \lambda' R})\Big)^{\frac{1}{s}-\frac{1}{l}} \left(\int_{S_{h, \lambda' R}}(g_u^p+g_u^q) \,d\mu \right)^{\frac{1}{l}}\nonumber\\
	& \leq C R \, \mu(B(y,R))^{1-\frac{1}{s}}\Big(\mu(S_{h, \lambda' R})-\mu(S_{k, \lambda' R})\Big)^{\frac{1}{s}-\frac{1}{l}} \left(\int_{S_{h, \lambda' R}}(ag_u^p+bg_u^q) \,d\mu \right)^{\frac{1}{l}}\nonumber\\
	& = CR \, \mu(B(y,R))^{1-\frac{1}{s}}\Big(\mu(S_{h, \lambda' R})-\mu(S_{k, \lambda' R})\Big)^{\frac{1}{s}-\frac{1}{l}} \left(\int_{S_{h, \lambda' R}}(ag_u^p+bg_u^q) \,d\mu \right)^{\frac{1}{l}},
\end{align*}
where $l=p, q$ and $C$ depends on $\alpha$, $\gamma$ and $C_d$. By De Giorgi type inequality \eqref{5.4}, we get
\begin{align}\label{beforemax}
	(k-h)^l \mu(S_{k,R})^l &\leq CR^l\mu(B(y,R))^{l-\frac{l}{s}}\Big(\mu(S_{h, \lambda' R})-\mu(S_{k, \lambda' R})\Big)^{\frac{l}{s}-1}\nonumber\\ 
	& \quad \cdot\left(\frac{1}{(\lambda ' R)^p}\int_{S_{h, 2\lambda' R}}a(u-h)^p\,d\mu + \frac{1}{(\lambda' R)^q}\int_{S_{h, 2\lambda ' R}}b(u-h)^q \,d\mu \right)\nonumber\\
	&\leq CR^l\mu(B(y,R))^{l-\frac{l}{s}}\Big(\mu(S_{h, \lambda' R})-\mu(S_{k, \lambda' R})\Big)^{\frac{l}{s}-1}\nonumber\\ & \quad \cdot\left(\frac{1}{R^p}\int_{S_{h, 2\lambda' R}}a(u-h)^p\,d\mu + \frac{1}{R^q}\int_{S_{h, 2\lambda ' R}}b(u-h)^q \,d\mu \right)\nonumber\\
	&\leq CR^l\mu(B(y,R))^{l-\frac{l}{s}}\Big(\mu(S_{h, \lambda' R})-\mu(S_{k, \lambda' R})\Big)^{\frac{l}{s}-1}\nonumber\\ & \quad \cdot\left(\frac{1}{R^p}\int_{S_{h, 2\lambda' R}}(u-h)^p\,d\mu + \frac{1}{R^q}\int_{S_{h, 2\lambda ' R}}(u-h)^q \,d\mu \right),
\end{align} where $C$ also depends on $\beta$. For $l=p,q$ we define
$$
M_{l}=\left(\frac{(k-h)\mu(S_{k, R})}{CR\mu(B(y,R))^{1-\frac{1}{s}}\Big(\mu(S_{h, \lambda' R})-\mu(S_{k, \lambda' R})\Big)^{\frac{1}{s}-\frac{1}{l}}}\right)^{l}.
$$
By inequality \eqref{beforemax} we deduce that
\begin{align*}
M_l\leq\frac{1}{R^p}\int_{S_{h, 2\lambda' R}}(u-h)^p\,d\mu + \frac{1}{R^q}\int_{S_{h, 2\lambda ' R}}(u-h)^q \,d\mu,
\end{align*}for $l=p,q$.

Therefore,
if $\displaystyle\max_{l=p,q}\Big\lbrace\frac{1}{R^l}\int_{S_{h, 2\lambda' R}}(u-h)^l\,d\mu \Big\rbrace=\frac{1}{R^p}\int_{S_{h, 2\lambda' R}}(u-h)^p\,d\mu$, we have
\begin{align*}
	(k-h)\mu(S_{k,R})\leq C\mu(B(y,R))^{1-\frac{1}{s}}&\Big(\mu(S_{h, \lambda' R})-\mu(S_{k, \lambda' R})\Big)^{\frac{1}{s}-\frac{1}{p}}\\
	&\cdot\left( \int_{S_{h, 2\lambda' R}}(u-h)^p\,d\mu\right)^{\frac{1}{p}}.
\end{align*}

Analogously, if $\displaystyle\max_{l=p,q}\Big\lbrace\frac{1}{R^l}\int_{S_{h, 2\lambda' R}}(u-h)^l\,d\mu \Big\rbrace=\frac{1}{R^q}\int_{S_{h, 2\lambda' R}}(u-h)^q\,d\mu$, we get
\begin{align*}
	(k-h)\mu(S_{k,R})\leq C\mu(B(y,R))^{1-\frac{1}{s}}&\Big(\mu(S_{h, \lambda' R})-\mu(S_{k, \lambda' R})\Big)^{\frac{1}{s}-\frac{1}{q}}\\
	&\cdot\left( \int_{S_{h, 2\lambda' R}}(u-h)^q\,d\mu\right)^{\frac{1}{q}}.
\end{align*}
Thus, depending on the case we are, there exists $C=C(\alpha, \beta, \gamma, C_d)>0$ such that 
\begin{align}\label{twocases}
	(k-h)\mu(S_{k,R})\leq C\mu(B(y,R))^{1-\frac{1}{s}}&\Big(\mu(S_{h, \lambda' R})-\mu(S_{k, \lambda' R})\Big)^{\frac{1}{s}-\frac{1}{q}}\nonumber\\
	&\cdot\left( \int_{S_{h, 2\lambda' R}}(u-h)^q\,d\mu\right)^{\frac{1}{q}},
\end{align}
is satisfied for either $l=p$ or $l=q$. 

At this point, using inequality \eqref{twocases}, we are ready to give the following lemma that is needed for the proof of Theorem \ref{Theorem 5.2}.

\begin{lemma}\label{Proposition 5.1}
Let $M =M(2\lambda'R)$, $m = m(2\lambda'R)$ and $k_0 = \frac{M+m}{2}$.
If $u$ satisfies \eqref{5.4},  it is locally bounded from below and $\mu(S_{k_0,R}) \leq \gamma \mu(B(y, R))$
for some $0 < \gamma < 1$, then $$\lim_{k \to M}\mu(S_{k,R})=0.$$
\end{lemma}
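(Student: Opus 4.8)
The plan is to use the estimate \eqref{twocases} with a dyadic sequence of levels converging to $M$, exploiting the fact that the measures $\mu(S_{k,R})$ are monotone decreasing in $k$ and hence converge to some limit $\sigma \geq 0$; the goal is to show $\sigma = 0$. First I would set, for $j \in \mathbb{N}$, the levels $k_j = M - 2^{-j}(M-k_0) = M - 2^{-j}\frac{M-m}{2}$, so that $k_j \nearrow M$ and $k_0$ is the starting level; note that all $k_j \geq k_0$, so by monotonicity $\mu(S_{k_j,R}) \leq \mu(S_{k_0,R}) \leq \gamma\mu(B(y,R))$, which is exactly the hypothesis needed to invoke \eqref{twocases} (applied with $h = k_j$ and $k = k_{j+1}$, noting $\mu(S_{h,R})\le\gamma\mu(B(y,R))$ is the condition \eqref{4.0ineq}). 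Since $u \leq M$ a.e.\ on $B(y,2\lambda'R)$ (as $M = M(2\lambda'R)$ is the essential supremum there), on the set $S_{k_j,2\lambda'R}$ we have $(u - k_j) \leq M - k_j = 2^{-j}\frac{M-m}{2}$, so
\begin{equation*}
\left(\int_{S_{k_j,2\lambda'R}}(u-k_j)^q\,d\mu\right)^{\frac{1}{q}} \leq 2^{-j}\frac{M-m}{2}\,\mu(S_{k_j,2\lambda'R})^{\frac{1}{q}}.
\end{equation*}

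Next I would plug this into \eqref{twocases} with $h = k_j$, $k = k_{j+1}$, using $k_{j+1} - k_j = 2^{-(j+1)}(M-m)$ on the left-hand side. After cancelling the factor $(M-m)$ and absorbing numerical powers of $2$ into the constant, this yields a recursion of the form
\begin{equation*}
\mu(S_{k_{j+1},R}) \leq C\,\mu(B(y,R))^{1-\frac{1}{s}}\Big(\mu(S_{k_j,\lambda'R}) - \mu(S_{k_{j+1},\lambda'R})\Big)^{\frac{1}{s}-\frac{1}{q}}\mu(S_{k_j,2\lambda'R})^{\frac{1}{q}}.
\end{equation*}
Now I would pass to the limit $j \to \infty$. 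By monotonicity the sequence $\mu(S_{k_j,\lambda'R})$ is decreasing and bounded, hence Cauchy, so $\mu(S_{k_j,\lambda'R}) - \mu(S_{k_{j+1},\lambda'R}) \to 0$; since $\frac{1}{s} - \frac{1}{q} > 0$ (because $s < p < q$), that factor tends to $0$. The remaining factors $\mu(B(y,R))^{1-1/s}$ and $\mu(S_{k_j,2\lambda'R})^{1/q}$ are bounded (both are at most $\mu(B(y,2\lambda'R))$ to a fixed power, and the space is doubling so $\mu(B(y,2\lambda'R)) < \infty$). Therefore the right-hand side tends to $0$, forcing $\lim_j \mu(S_{k_{j+1},R}) = 0$. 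Finally, since $S_{k,R} = \bigcap_{j}S_{k_j,R}$ as $k \to M$ (or simply because $\mu(S_{k,R})$ is monotone in $k$ and the subsequence $\mu(S_{k_j,R})$ converges to $0$), we conclude $\lim_{k\to M}\mu(S_{k,R}) = 0$.

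The local boundedness from below of $u$ is used only to guarantee that $m = m(2\lambda'R)$ is finite, so that $k_0 = \frac{M+m}{2}$ and the increments $k_{j+1} - k_j$ are genuine positive numbers; I would mention this explicitly. The main obstacle I anticipate is bookkeeping the exponents correctly when substituting into \eqref{twocases} — in particular making sure the power of $(M-m)$ cancels cleanly and that the geometric factor $2^{-j\cdot q}$ on the $q$-th power side combines with $(k_{j+1}-k_j)^{-1}$-type terms to leave a uniformly bounded (in fact summable, though summability is not needed) constant — together with the technical point that \eqref{twocases} as stated requires $\frac{1}{s} - \frac{1}{q} \geq 0$ for the vanishing-difference factor to help rather than hurt, which holds here. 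A subtlety worth a remark: \eqref{twocases} was derived under \eqref{4.0ineq} with the fixed exponent $\gamma$, so one must keep $\gamma$ fixed throughout the iteration rather than let it shrink, which is automatic since every $k_j \geq k_0$.
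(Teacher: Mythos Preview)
Your argument is correct and establishes the lemma as stated. It is, however, a genuinely different route from the paper's.

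Both you and the paper set up the same dyadic levels $k_j$ and reach the same recursion
\[
\mu(S_{k_{j+1},R}) \leq C\,\mu(B(y,R))^{1-\frac{1}{s}}\Big(\mu(S_{k_j,\lambda'R}) - \mu(S_{k_{j+1},\lambda'R})\Big)^{\frac{1}{s}-\frac{1}{l}}\mu(S_{k_j,2\lambda'R})^{\frac{1}{l}},
\]
with $l\in\{p,q\}$. At this point you simply pass to the limit: the monotone bounded sequence $\mu(S_{k_j,\lambda'R})$ is Cauchy, so the middle factor tends to $0$, the remaining factors are bounded, and you are done. The paper instead raises both sides to the power $\frac{sl}{l-s}$, replaces $\mu(S_{k_{j+1},R})$ by $\mu(S_{k_n,R})$ for $j<n$ via monotonicity, and sums: the differences telescope and one obtains the quantitative estimate
\[
n\,\mu(S_{k_n,R})^{\frac{sl}{l-s}} \leq C\,\mu(B(y,R))^{\frac{sl}{l-s}},\qquad\text{i.e.}\qquad \frac{\mu(S_{k_n,R})}{\mu(B(y,R))}\leq C\,n^{-\xi},\ \xi>0,
\]
with $C$ and $\xi$ depending only on the structural data and not on $u$ or on the ball.

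What each approach buys: yours is shorter and more transparent for the bare conclusion $\lim_{k\to M}\mu(S_{k,R})=0$. The paper's telescoping argument, on the other hand, yields a \emph{uniform} decay rate (this is exactly Remark~\ref{indipendence of n}), and that uniformity is essential in the proof of Theorem~\ref{Theorem 5.2}, where one must pick a single integer $n$ --- independent of $u$ and of $B(y,R)$ --- so that $C\big(\mu(S_{k_n,R})/\mu(B(y,R))\big)^{1/q}<\tfrac{1}{2}$. Your Cauchy-sequence argument gives no control on the rate and would not support that step. So your proof is perfectly adequate for Lemma~\ref{Proposition 5.1} in isolation, but if you intend to continue to the H\"older continuity result you will still need to extract the quantitative bound, for which the summation trick is the natural tool.
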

\begin{proof}
Let $k_j=M-2^{-(j+1)} (M-m)$, $j\in \mathbb{N} \cup \{0\}$. Therefore, $\lim_{j \to +\infty}k_j = M$ and $k_0=\frac{M+m}{2}.$ 
Then, we get $M-k_{j-1}=2^{-j} (M-m)$ and $k_j-k_{j-1}=2^{-(j+1)} (M-m)$. We remark that $u-k_{j-1}\leq M-k_{j-1}$ on $S_{k_{j-1}, \lambda' R}$.  By \eqref{twocases}, for  either $l=p$ or $l=q$, we deduce that
\begin{align*}
 (k_j-k_{j-1}) \mu(S_{k_j,R})  &\leq C  \mu(B(y,R))^{1-\frac{1}{s}} \Big(\mu(S_{k_{j-1}, \lambda' R})-\mu(S_{k_j, \lambda' R})\Big)^{\frac{1}{s}-\frac{1}{l}} \\&\hspace{2.6cm} \cdot \left(\int_{S_{k_{j-1}, 2\lambda' R}}(M-k_{j-1})^i \,d\mu \right)^{\frac{1}{l}}.
\end{align*}
Thus,
\begin{align*}
2^{-(j+1)} (M-m) \mu(S_{k_j,R}) \leq C \mu(B(y,R))^{1-\frac{1}{s}+\frac{1}{l}} \Big(\mu(S_{k_{j-1}, \lambda' R})&-\mu(S_{k_j, \lambda' R})\Big)^{\frac{1}{s}-\frac{1}{l}}\\&  \cdot2^{-j} (M-m).
\end{align*}
If $n>j $, then $\mu(S_{k_n,  R})\leq \mu(S_{k_j,  R})$, and so
\begin{align*}
\mu(S_{k_n,  R})\leq  C \mu(B(y,R))^{1-\frac{1}{s}+\frac{1}{l}} \Big(\mu(S_{k_{j-1}, \lambda' R})-\mu(S_{k_j, \lambda' R})\Big)^{\frac{1}{s}-\frac{1}{l}}.
\end{align*}
By summing the above inequality over $j \in (j_0, n)$, we get
\begin{align}\label{5.3KS}
n\mu(S_{k_n,  R})^{\frac{sl}{l-s}}&\leq C \mu(B(y,R))^{\frac{sl-l+s}{(l-s)}}\,\Big(\mu(S_{k_{0}, \lambda' R})-\mu(S_{k_n, \lambda' R})\Big)\nonumber \\
&\leq C\mu(B(y,R))^{\frac{sl}{(l-s)}}.
\end{align}
Hence, independently of the case,  $\lim_{n\to+\infty}\mu(S_{k_n  R})=0$ and, since 
$\mu(S_{k,  R})$ is a monotonic decreasing function of $k$, we conclude that $\lim_{k \to M}\mu(S_{k,R})=0$.
\end{proof}

\begin{remark}\label{indipendence of n}
From inequality \eqref{5.3KS}, we have that $$\frac{\mu(S_{k_n,  R})}{\mu(B(y,R))}\leq C n^{-\xi} \to 0, \quad \mbox{as $n \to +\infty$},$$ uniformly with respect to $u$, where $\xi>0$.
\end{remark}
\begin{remark}\label{-uinsteadofu}
We note that it is not restrictive to suppose that 
\begin{equation}\label{muSalpha}
\mu(S_{k, R})\leq \frac{\mu(B(y, R))}{2} \quad\mbox{for all $k \in \mathbb{R}$, $R>0$ and $y \in \Omega$ with $B(y, R) \subset \Omega$.}
\end{equation}
 In fact, if $$\mu(S_{k, R})=\mu(\{x \in B(y, R):u(x)>k\})> \frac{\mu(B(y, R))}{2},$$ then
$$\mu(\{x \in B(y, R):-u(x)\leq - k\})>\frac{\mu(B(y, R))}{2},$$
and so, $$\mu(\{x \in B(y, R):-u(x)>- k\})<\frac{\mu(B(y, R))}{2}.$$
That is, inequality \eqref{muSalpha} holds true considering $-u$ instead of $u$.
 \end{remark}

\begin{proof}[Proof of Theorem \ref{Theorem 5.2}]
Firstly, we observe that ${u}$ and $-{u}$ satisfy \eqref{5.4} (see Remark \ref{Proposition 3.3KS}).
We consider $k_0 =\frac{M + m}{2}$ where $M$ and $m$ are as in Lemma \ref{Proposition 5.1}. As we have pointed out in Remark \ref{-uinsteadofu}, it is not restrictive to suppose that 
\begin{equation}\label{hypothesisRemark}
\mu(S_{k_0, R})\leq \frac{\mu(B(y, R))}{2}.
\end{equation} 
Using Theorem \ref{Theorem 4.2} replacing $k_0$ with $k_n = M - 2^{-n-1}(M - m)$, $n \in \mathbb{N}\cup \{0\}$, we have
\begin{equation}\label{inequM}
M\left(\frac{\lambda' R}{2}\right) \leq k_n + C (M(2\lambda' R)-k_n) \left(\frac{\mu(S_{k_n,R})}{\mu(B(y,R))}\right)^{\frac{1}{q}},
\end{equation} with $C$ as in Theorem \ref{Theorem 4.2}.

Inequality \eqref{hypothesisRemark} ensures that we can use Remark \ref{indipendence of n}, so it is possible to choose an integer
$n$, independent from $B(y, R)$ and $u$, large enough such that
$$C\left(\frac{\mu(S_{k_n,R})}{\mu(B(y,R))}\right)^{\frac{1}{q}}< \frac{1}{2}.$$
Thanks to this choice, from \eqref{inequM} we deduce the following inequality.
$$M\left(\frac{\lambda' R}{2}\right)<M(2\lambda' R)-(M(2\lambda' R)-m(2\lambda' R))2^{-(n+2)},$$
thus,
\begin{align*}
M\left(\frac{\lambda' R}{2}\right)-m\left(\frac{\lambda' R}{2}\right)&\leq M\left(\frac{\lambda' R}{2}\right)-m(2\lambda' R)\\
&\leq (M(2\lambda' R)-m(2\lambda' R))(1-2^{-(n+2)}).
\end{align*}
As a consequence of the previous inequality, we can write
\begin{equation}\label{5.5KS}
\osc \left(\tilde{u}, B\left(y, \frac{\lambda' R}{2}\right)\right)< \tau \osc(\tilde{u}, B(y, 2\lambda' R)),
\end{equation}
where $\tau=1-2^{-(n+2)}<1$.
Now, we consider an index $j \geq 1$ such that $$4^{j-1} \leq\frac{R}{\rho}  < 4^j.$$ Then, from inequality \eqref{5.5KS}, we get
$$\osc(\tilde{u}, B(y, \lambda' \rho)) \leq \tau^{j-1} \osc(\tilde{u}, B(y, \lambda'  4^{j-1}\rho)) \leq \tau^{j-1} \osc(\tilde{u}, B(y, \lambda' R)).$$
We observe that $\tau= 4^{\log_4 \tau}= 4^{\frac{\log \tau}{\log 4}}$ and we deduce that
$$ \tau^{j-1}=4^{\frac{\log \tau}{log 4}(j-1)}=4^{-(j-1)\left(-\frac{\log \tau}{log 4}\right)}=\left(\frac{4}{4^j}\right)^{-\frac{\log \tau}{\log 4}}\leq\left(\frac{4}{\frac{R}{\rho}}\right)^{-\frac{\log \tau}{\log 4}}= 4^{\eta} \left(\frac{R}{\rho}\right)^{-\eta} ,$$ where $\eta=-\frac{\log\tau}{\log 4}< 1$. At the end, we obtain
$$\osc(\tilde{u}, B(y, \lambda' \rho)) \leq 4^{\eta} \left(\frac{R}{\rho}\right)^{-\eta} \osc(\tilde{u}, B(y, \lambda' R)) ,$$ that completes the proof. 
\end{proof}

\section{Harnack inequality}\label{Sec5}
The aim of this section is to prove Harnack inequality for $(p,q)$-quasiminimizers.
\begin{theorem}[Harnack inequality]\label{Corollary 7.3 KS}
	Suppose that $u > 0$, $u$ and $-u$ satisfy \eqref{5.4}. Then there
	exists a constant $C \geq 1$ so that
	$$\esssup_{B(y,R)} u \leq C\,\essinf_{B(y,R)} u$$
	for every ball $B(y,R)$ for which $B(y, 6R) \subset \Omega$ and $0 < R \leq \frac{R(y)}{6}$. Here the constant $C$ is independent of the function $u$.
\end{theorem}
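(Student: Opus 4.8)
The Harnack inequality is the culmination of the De Giorgi machinery, and the standard route is to combine two one-sided estimates: the upper bound for $\esssup u$ in terms of an $L^q$-average coming from the weak Harnack inequality (Theorem \ref{Theorem 4.2}), and a matching lower bound for $\essinf u$ in terms of the same kind of average, obtained by applying the De Giorgi technology to $-u$ (equivalently to $1/u$, but here it is cleaner to use $-u$, which also satisfies \eqref{5.4} by Remark \ref{Proposition 3.3KS}). The bridge between the two is a \emph{crossover} or \emph{density} lemma: a De~Giorgi--type measure-shrinking argument (the one already packaged in Lemma \ref{Proposition 5.1} and Remark \ref{indipendence of n}) showing that the level sets $S_{k,R}$ for $k$ near the supremum have small relative measure, together with a telescoping iteration over a sequence of balls $B(y, 6^{1-j}R)$ or a standard covering/chaining argument to pass from estimates on concentric balls to the full ball $B(y,R)$ with the required geometric margin $B(y,6R)\subset\Omega$.

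First I would fix the ball and set, say, $R_0 = R$ and work inside $B(y,6R)\subset\Omega$. Normalizing, one may assume after multiplying by a constant that $\essinf_{B(y,R)}u$ is comparable to $1$; the goal becomes $\esssup_{B(y,R)}u\le C$. The key quantitative input is the De~Giorgi oscillation/decay estimate already established: from \eqref{5.5KS} (or directly from Theorem \ref{Theorem 4.2} applied to $u$ and, via Remark \ref{Proposition 3.3KS}, to $-u$) one controls $\esssup u$ on a half-ball by $k_0 + C(\dashint (u-k_0)_+^q)^{1/q}$ and symmetrically $\essinf u$ on a half-ball by $k_0 - C(\dashint (k_0-u)_+^q)^{1/q}$. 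The nontrivial point is to choose the truncation level $k_0$ (a suitable dyadic value between $m(\cdot)$ and $M(\cdot)$, exactly as in Lemma \ref{Proposition 5.1}) so that one of the two averaged tails is small; this is where the positivity $u>0$ enters, ensuring $(k_0-u)_+ \le k_0$ is bounded, so that smallness of $\mu(S_{k_0,R})/\mu(B(y,R))$ — guaranteed by Remark \ref{indipendence of n} uniformly in $u$ — forces the average to be small.

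The next step is the iteration. Define $M_j = \esssup_{B(y, \sigma_j R)} u$ and $m_j = \essinf_{B(y,\sigma_j R)} u$ on a shrinking sequence of radii $\sigma_j R$ with $\sigma_j \searrow$ a fixed fraction, using the dilation factors ($6$ in the statement, or $\lambda'$, $2\lambda'$ as in Section \ref{Sec4}) to keep all the enlarged balls inside $\Omega$. Applying the one-sided estimates on each scale and using the measure-decay of the level sets near the extrema (Lemma \ref{Proposition 5.1}), one obtains a recursion of the form $M_{j+1} - m_{j+1} \le \tau (M_j - m_j)$ with $\tau = 1 - 2^{-(n+2)} < 1$ for a fixed $n$ chosen via Remark \ref{indipendence of n}, \emph{together with} a lower bound $m_{j} \ge c_0 > 0$ propagated by the positivity; combining the geometric decay of the oscillation with the uniform positive lower bound yields $M_j \le C m_j$ for all $j$, and letting $j$ be the index with $\sigma_j R \approx R$ (equivalently, running finitely many steps to bridge radius $R$ to radius $6R$ via a covering argument) gives $\esssup_{B(y,R)} u \le C \essinf_{B(y,R)} u$ with $C = C(\alpha,\beta,C_{PI},Q)$ independent of $u$.

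The main obstacle I anticipate is the \emph{chaining/covering step} that converts the concentric-ball Harnack estimate into the stated one over a single ball $B(y,R)$ with only the margin $B(y,6R)\subset\Omega$: one must cover $B(y,R)$ by finitely many balls (a number depending only on the doubling constant) on which the concentric estimate applies, and propagate the two-sided bound along a chain joining an arbitrary pair of points — here the fact that all relevant constants are uniform and that $\mu$ is doubling is exactly what makes this finite and controlled. A secondary technical nuisance is bookkeeping the $(p,q)$-split: every average must be handled with the $\max_{l=p,q}$ device of Section \ref{Sec4}, so the estimates for $(u-k)_+^p$ and $(u-k)_+^q$ must be tracked in parallel and then combined, but this is routine given Lemmas \ref{lem 4.1}, \ref{Proposition 4.1 KS} and Theorem \ref{Theorem 4.2}.
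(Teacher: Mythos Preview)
Your plan has a genuine gap: oscillation decay together with positivity does \emph{not} yield a Harnack inequality with a constant independent of $u$. The recursion $M_{j+1}-m_{j+1}\le\tau(M_j-m_j)$ you extract from Section~\ref{Sec4} only tells you that $M_J\le m_J+\tau^J(M_0-m_0)$ after finitely many steps bridging radius $6R$ to radius $R$. To conclude $M_J\le C\,m_J$ with $C$ independent of $u$ you would need $M_0-m_0$ controlled by $m_J$; but $M_0=\esssup_{B(y,6R)}u$ is precisely the quantity you are trying to bound, so the argument is circular. The ``uniform positive lower bound $m_j\ge c_0>0$'' you invoke is just $m_0=\essinf_{B(y,6R)}u$, which depends on $u$. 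In short, you are trying to derive Harnack from H\"older, and that is impossible in general.

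What is missing is an \emph{expansion of positivity}: from a lower measure bound $\mu(\{u\ge\tau\}\cap B)\ge\delta\,\mu(B)$ one must produce a pointwise lower bound $\essinf_{B'}u\ge\lambda\tau$ on a comparable ball, with $\lambda=\lambda(\delta)$ independent of $u$. The paper supplies exactly this via Lemma~\ref{Lemma6.1KS} and Lemma~\ref{Lemma6.2KS} (applied to $-u$), and then feeds it into the Krylov--Safonov covering Lemma~\ref{Lemma 7.2KS} to obtain the reverse weak Harnack inequality of Theorem~\ref{Theorem 7.1KS},
\[
\essinf_{B(y,3R)}u\;\ge\;C\Big(\dashint_{B(y,R)}u^{\sigma}\,d\mu\Big)^{1/\sigma}
\]
for some small $\sigma>0$. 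The full Harnack inequality then follows by combining this with Theorem~\ref{Theorem 4.2} (taken with $k_0=0$). Your outline never produces a statement of this type; the density Lemma~\ref{Proposition 5.1} you cite concerns level sets near the \emph{supremum} and feeds the H\"older argument, not the infimum bound. The covering/chaining you mention at the end is a different, harmless step and does not substitute for Krylov--Safonov.
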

In order to prove Theorem \ref{Corollary 7.3 KS}, we need to premise the following two lemmata.\\
Let us define $D_{\tau, R} = \{x \in B(y, R): u(x) < \tau\}$.
\begin{lemma}\label{Lemma6.1KS}
Let $0<R\leq R(y)$ be such that $B(y, R)\subset \Omega$. Let $\tau>0$, $u\geq 0$ and $-u$ satisfy \eqref{5.4}.
Then there exists $\gamma_0 \in (0,1)$, independent of the ball $B(y, R)$, such that if \begin{equation}\label{gamma0}\mu(D_{\tau, R})\leq \gamma_0 \mu(B(y,R)),
\end{equation} then $$\essinf_{B\left(y,\frac{R}{2}\right)} u \geq \frac{\tau}{2}.$$
\end{lemma}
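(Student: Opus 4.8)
The plan is to run a De Giorgi iteration on the superlevel sets of $\tau - u$, which plays the role of a nonnegative subsolution-type function, exploiting that $-u$ satisfies the De Giorgi inequality \eqref{5.4}. First I would rewrite the hypothesis: setting $w = (\tau - u)_+$ we have $\{w > 0\} = D_{\tau,R}$, and the assumption $\mu(D_{\tau,R}) \le \gamma_0 \mu(B(y,R))$ says exactly that $w$ is supported on a small-measure set in $B(y,R)$. For levels $0 < \ell < \tau$ the set $D_{\ell,R} = \{u < \ell\} \subset D_{\tau,R}$ still has measure at most $\gamma_0 \mu(B(y,R))$, so Lemma \ref{Lemma 2.1 KS} applies uniformly on all these levels with the same constant depending only on $\gamma_0$ (and the structural constants).

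Then I would set up the iteration with levels approaching $\tau/2$ from below: put $\ell_n = \tau/2 + 2^{-n-1}(\tau/2) = \tau(1 + 2^{-n-1})/2$ so $\ell_0 = \tau$ wait — better, $\ell_n = \frac{\tau}{2}(1 + 2^{-n})$ so $\ell_0 = \tau$ and $\ell_n \searrow \tau/2$, and radii $\rho_n = \frac{R}{2}(1 + 2^{-n})$ so $\rho_0 = R$, $\rho_n \searrow R/2$. The quantity to iterate is $a_n = \dashint_{B(y,\rho_n)} (\ell_n - u)_+^q \, d\mu$ together with its $p$-analogue, exactly mirroring the functions $u_l(k,\rho)$ of Lemma \ref{Proposition 4.1 KS} but applied to $-u$. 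Applying \eqref{5.4} for $-u$ (via the equivalent $(u-k)_-$ form), the Poincaré/Sobolev inequality from Lemma \ref{Lemma 2.1 KS}, and the Chebyshev-type bound $\mu(D_{\ell_{n+1},\rho_{n+1}}) \le (\ell_n - \ell_{n+1})^{-l}\int (\ell_n - u)_+^l$, one gets a recursion of the form $a_{n+1} \le C \, 2^{cn} a_n^{1+\theta}$ with $\theta = 1 - p/t > 0$, where $t > q$ comes from Remark \ref{rem1poin} as in Section \ref{Sec3}. The standard fast-geometric-convergence lemma (Lemma 7.1 in \cite{G}, or the argument already used in Lemma \ref{Proposition 4.1 KS}) then shows that if the initial datum $a_0$ is small enough, $a_n \to 0$, which forces $(\frac{\tau}{2} - u)_+ = 0$ a.e.\ on $B(y, R/2)$, i.e.\ $\essinf_{B(y,R/2)} u \ge \tau/2$.

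The point where the choice of $\gamma_0$ enters — and where I expect the main technical care to be needed — is making the initial datum $a_0 = \dashint_{B(y,R)}(\tau - u)_+^q\,d\mu$ small enough to trigger the convergence criterion. One does not control $\tau - u$ pointwise from above a priori, but one does have the crude bound $(\tau - u)_+ \le \tau$ on $D_{\tau,R}$ and $(\tau - u)_+ = 0$ off $D_{\tau,R}$, hence $a_0 \le \tau^q \mu(D_{\tau,R})/\mu(B(y,R)) \le \tau^q \gamma_0$. Meanwhile the convergence threshold in Lemma 7.1 of \cite{G} is of the form $a_0 \le C^{-1/\theta} b^{-1/\theta^2}$ for explicit $C, b$ depending only on $\alpha, \beta, C_{PI}, Q, \theta$; crucially this threshold scales like $\tau^q$ as well once one tracks the homogeneity (both sides of the recursion scale like $\tau^q$), so dividing out $\tau^q$ one is left needing $\gamma_0$ below a purely structural constant, independent of $\tau$ and of $B(y,R)$. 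I would therefore choose $\gamma_0 \in (0,1)$ to be exactly that structural constant. The main obstacle is thus bookkeeping: verifying the $\tau$-homogeneity of the recursion so that $\gamma_0$ can be chosen independently of $\tau$ and the ball, and handling the two cases $l = p$ versus $l = q$ uniformly (as was done around \eqref{twocases} and \eqref{ks8}), but no genuinely new idea beyond the De Giorgi machinery already deployed is required.
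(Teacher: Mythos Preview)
Your proposal is correct but takes a longer route than the paper. The paper's proof is a three-line application of the already-established weak Harnack inequality, Theorem \ref{Theorem 4.2}: applying that theorem to $-u$ with $k_0 = -\tau$ gives directly
\[
\essinf_{B(y,R/2)} u \;\ge\; \tau - C\left(\dashint_{B(y,R)}(\tau-u)_+^q\,d\mu\right)^{1/q} \;\ge\; \tau - C\tau\left(\frac{\mu(D_{\tau,R})}{\mu(B(y,R))}\right)^{1/q},
\]
using $(\tau - u)_+ \le \tau$ since $u \ge 0$; then $\gamma_0 = (2C)^{-q}$ finishes. Your plan instead re-runs the De Giorgi iteration of Lemma \ref{Proposition 4.1 KS} in this specific situation, with levels $\ell_n \searrow \tau/2$ chosen in advance. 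This works --- the $\tau$-homogeneity argument you sketch is exactly what makes $\gamma_0$ independent of $\tau$, and the two-case handling $l=p$ versus $l=q$ goes through as before --- but it duplicates machinery already encapsulated in Theorem \ref{Theorem 4.2}. The paper's approach buys brevity and modularity; yours is self-contained at this spot, but since the iteration has already been carried out once there is no reason to repeat it.
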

\begin{proof}
We apply Theorem \ref{Theorem 4.2} to $-u$ and $k_0=-\tau$ and we get
$$\esssup_{B\left(y,\frac{R}{2}\right)} (-u) \leq -\tau +C \left(\frac{1}{\mu(B(y, R))} \int_{D_{\tau, R}}(-u+\tau)^q\, d\mu\right)^{\frac{1}{q}} .$$
Since $\tau-u\leq \tau$, we have that 
\begin{align*}
\essinf_{B\left(y,\frac{R}{2}\right)} u &\geq \tau - C \left(\frac{1}{\mu(B(y, R))}\int_{D_{\tau, R}}(\tau-u)^q\, d\mu\right)^{\frac{1}{q}}	\\
&\geq \tau - C \tau \left(\frac{\mu(D_{\tau, R})}{\mu(B(y,R))}\right)^{\frac{1}{q}}.
\end{align*}
We can choose $\gamma_0= (2C)^{-q}$  in \eqref{gamma0},
 so the proof is completed.
\end{proof}

\begin{lemma}\label{Lemma6.2KS}
Suppose that the hypotheses of Lemma \ref{Lemma6.1KS} hold. For every $\gamma$ with $0 < \gamma < 1$ there is a constant $\lambda > 0$ such that if $\mu(D_{\tau, R})\leq \gamma \mu(B(y,R))$, then $$\essinf_{B\left(y,\frac{R}{2}\right)} u \geq \lambda \tau.$$
\end{lemma}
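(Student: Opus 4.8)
The plan is to iterate Lemma~\ref{Lemma6.1KS} along a geometrically decreasing sequence of levels, using the measure-decay estimate \eqref{twocases} (applied to $-u$) to push the ``deficiency'' below the threshold $\gamma_0$ supplied by Lemma~\ref{Lemma6.1KS}. So suppose $\tau>0$ and $\mu(D_{\tau,R})\le\gamma\mu(B(y,R))$, and set $\tau_j=2^{-j}\tau$ for $j\in\mathbb{N}\cup\{0\}$. The sets $D_{\tau_j,r}=\{x\in B(y,r):u(x)<\tau_j\}$ then decrease both in $j$ and in $r$, and in particular $\mu(D_{\tau_j,R})\le\mu(D_{\tau,R})\le\gamma\mu(B(y,R))$ for all $j$.

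First I would feed the pair of levels $h=-\tau_{j-1}<k=-\tau_j$ into \eqref{twocases}, applied to $-u$ (which satisfies \eqref{5.4}) on suitable concentric balls inside $\Omega$; here $D_{\tau_j,r}=\{x\in B(y,r):(-u)(x)>-\tau_j\}$, $k-h=\tau_j$, and on $D_{\tau_{j-1},2\lambda'R}$ one has $0\le -u-h=\tau_{j-1}-u\le\tau_{j-1}$ because $u\ge 0$. Bounding $\mu(D_{\tau_{j-1},2\lambda'R})\le\mu(B(y,2\lambda'R))\le C\mu(B(y,R))$ by the doubling property and cancelling the powers of $\tau_j$ (via $\tau_{j-1}/\tau_j=2$) would give, for every $j\ge 1$,
\begin{equation*}
\mu(D_{\tau_j,R})\le C\,\mu(B(y,R))^{1-\sigma}\bigl(\mu(D_{\tau_{j-1},\lambda'R})-\mu(D_{\tau_j,\lambda'R})\bigr)^{\sigma},\qquad\sigma:=\tfrac1s-\tfrac1q>0,
\end{equation*}
with $C=C(\alpha,\beta,\gamma,C_d,C_{PI},Q)$.

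Next I would raise this to the power $1/\sigma$ and sum over $j=1,\dots,N$. The left-hand sum is at least $N\,\mu(D_{\tau_N,R})^{1/\sigma}$ since $\mu(D_{\tau_j,R})$ is non-increasing in $j$, while on the right the differences telescope and are controlled by $\mu(D_{\tau,\lambda'R})\le\mu(B(y,\lambda'R))\le C\mu(B(y,R))$; since $(1-\sigma)/\sigma+1=1/\sigma$, this produces
\begin{equation*}
N\,\mu(D_{\tau_N,R})^{1/\sigma}\le C\,\mu(B(y,R))^{1/\sigma},\qquad\text{hence}\qquad\frac{\mu(D_{\tau_N,R})}{\mu(B(y,R))}\le\Bigl(\frac{C}{N}\Bigr)^{\sigma}\longrightarrow 0
\end{equation*}
as $N\to\infty$. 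Consequently one may fix $N=N(\gamma,\alpha,\beta,C_d,C_{PI},Q)$, independent of $\tau$, $R$, $y$, $u$, so large that $\mu(D_{\tau_N,R})\le\gamma_0\,\mu(B(y,R))$. Applying Lemma~\ref{Lemma6.1KS} with $\tau_N$ in place of $\tau$ then gives $\essinf_{B(y,R/2)}u\ge\tau_N/2=2^{-N-1}\tau$, so the claim holds with $\lambda=2^{-N-1}$.

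I expect the main obstacle to be the bookkeeping in the first step: one must verify that \eqref{twocases} --- established for a fixed function at a fixed level (and, in the text, for a positive level $h$) --- may legitimately be invoked at every dyadic level for $-u$, its deficiency hypothesis $\mu(S_{h,\cdot})\le\gamma\mu(B(y,\cdot))$ being met at each level precisely because the level sets are nested and decreasing, and that the various dilation factors $\lambda'$ and doubling constants can be arranged so that all balls in play stay inside $\Omega$ and every estimate is uniform in $B(y,R)$ and $u$.
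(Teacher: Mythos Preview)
Your proof is correct and follows essentially the same route as the paper's: both apply \eqref{twocases} to $-u$ at the dyadic levels $\tau_j=2^{-j}\tau$, telescope to obtain the decay $\mu(D_{\tau_N,R})/\mu(B(y,R))\le (C/N)^{\sigma}$, and then invoke Lemma~\ref{Lemma6.1KS} once $N$ is large enough. The only cosmetic difference is that the paper keeps both exponents $\tfrac1s-\tfrac1p$ and $\tfrac1s-\tfrac1q$ (via the reference to \eqref{5.3KS}), whereas you collapse to the single worst-case exponent $\sigma=\tfrac1s-\tfrac1q$; since the measure difference is dominated by $C\mu(B(y,R))$, the $l=p$ case of \eqref{twocases} indeed implies your inequality with $\sigma$, so this shortcut is legitimate.
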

\begin{proof}
We consider $k,h > 0$ such that $-k > -h$. Using inequality \eqref{twocases} with $-u$, $-k$ and $-h$ instead of $u$, $k$ and $h$, respectively, we obtain the following inequality.
\begin{align*}
(h-k) \mu(D_{k,R})& \leq  C \mu(B(y,R))^{1-\frac{1}{s}} \Big(\mu(D_{h, \lambda' R})-\mu(D_{k, \lambda' R})\Big)^{\frac{1}{s}-\frac{1}{l}} \\ &\hspace{3.25cm} \cdot \left(\int_{D_{h, 2\lambda' R}}(h-u)^l \,d\mu \right)^{\frac{1}{l}},
\end{align*}
for either $l=p$ or $l=q$. 
Proceeding as in the proof of Lemma \ref{Proposition 5.1} with $m = \tau$ and $M = 0$, in accordance with \eqref{5.3KS}, we get
\begin{equation*}
n\mu(D_{2^{-(n+1)}\tau,  R})^{\frac{sl}{l-s}}\leq C\mu(B(y,R))^{\frac{sl}{l-s}}, 
\end{equation*} 
Independently of $l$, we can choose $n$ large enough so that
$$\mu(D_{2^{-(n+1)}\tau,  R})\leq \gamma_0 \mu(B(y,R)),$$
where $\gamma_0$ is as in Lemma \ref{Lemma6.1KS}.  Thus, using Lemma  \ref{Lemma6.1KS}, where $\tau$ is replaced by $2^{-(n+1)}\tau$, we conclude
$$\essinf_{B\left(y,\frac{R}{2}\right)} u \geq 2^{-(n+2)} \tau.$$
\end{proof}
 \begin{remark}[\cite{KS}, Remark 6.3]\label{Remark 6.3 KS}
 Let $0 < R \leq \frac{{\rm diam}(X)}{18}$ with $B(y, 6R) \subset \Omega$. We suppose that there exists $0 < \delta < 1$ such that
 $$\mu(\{x \in B(y,R): u(x) \geq \tau\}) \geq \delta \mu(B(y, R)).$$
Since the measure $\mu$ is doubling $($see \eqref{doubling}$)$, we have
 
 \begin{align*}\mu(\{x \in B(y,R): u(x) \geq \tau\}) &\geq \delta \mu(B(y, R))\geq \frac{\delta}{C_d}\mu( B(y,2R))\\&\geq \frac{\delta}{C_d^2}\mu( B(y,4R))\geq \frac{\delta}{C_d^2}\mu( B(y,3R))\\&\geq \frac{\delta}{C_d^3}\mu( B(y,6R))
 \end{align*}
 and so,
 $$\mu(\{x \in B(y,6R): u(x) \geq \tau\})\geq \mu(\{x \in B(y,R): u(x) \geq \tau\}) \geq \frac{\delta}{C_d^3}\mu( B(y,6R)),$$ where $C_d \geq 1$ is the doubling constant of $\mu$. Hence by Lemma \ref{Lemma6.2KS} we have 
 \begin{equation}\label{(6.1)KS}
 \essinf_{B\left(y,3R\right)} u \geq \lambda \tau.
 \end{equation}
 where $\lambda > 0$ is as in Lemma \ref{Lemma6.2KS}.
Clearly we may assume that $0 < \lambda < 1$.
 \end{remark}

Now, we report the Krylov-Safonov covering Theorem \cite{KrS} on a doubling
metric measure space which has been proven in \cite{KS}.
\begin{lemma}[\cite{KS}, Lemma 7.2]\label{Lemma 7.2KS}
Let $B(y, R)$ be a ball in $X$, and $E \subset B(y, R)$ be $\mu$-measurable. Let $0 < \delta < 1$, and define
\begin{equation}\label{Edelta}
E_\delta = \cup_{\rho>0}\{B(x, 3\rho) \cap B(y, R): x\in B(y, R), \mu(E\cap B(x, 3\rho))> \delta \mu( B(x, \rho)) \}.
\end{equation}
Then, either $E_\delta = B(y, R)$, or else $\mu(E_\delta) \geq (C_d\, \delta)^
{-1}\mu(E)$, where $C_d\geq 1$  is the doubling constant of $\mu$.
\end{lemma}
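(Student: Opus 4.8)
The plan is to adapt the classical Krylov--Safonov covering argument to the doubling metric setting, combining a maximal-radius selection with the basic $5$-ball covering lemma.

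I would first settle the dichotomy and reduce to bounded radii. If some ball $B(x,3\rho)$ with $x\in B(y,R)$ and $\mu(E\cap B(x,3\rho))>\delta\mu(B(x,\rho))$ has $3\rho\geq 2R$, then every $z\in B(y,R)$ satisfies $d(x,z)\leq d(x,y)+d(y,z)<2R\leq 3\rho$, so $B(y,R)\subseteq B(x,3\rho)$ and hence $B(x,3\rho)\cap B(y,R)=B(y,R)\subseteq E_\delta$, i.e. $E_\delta=B(y,R)$. So assume $E_\delta\neq B(y,R)$; then every ball entering the definition of $E_\delta$ has $3\rho<2R$. Moreover, since $\mu$ is doubling, the Lebesgue differentiation theorem gives that $\mu$-a.e. $x\in E$ is a point of density $1$ of $E$; for such $x$ and all small $\rho$ one has $\mu(E\cap B(x,3\rho))\geq\mu(E\cap B(x,\rho))>\delta\mu(B(x,\rho))$, whence $x\in B(x,3\rho)\cap B(y,R)\subseteq E_\delta$. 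Thus $\mu(E\setminus E_\delta)=0$, which already gives the claim when $C_d\delta\geq1$.

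Assume now $C_d\delta<1$. For each density point $x\in E$ I would pick an admissible radius $\rho_x$ (meaning $\mu(E\cap B(x,3\rho_x))>\delta\mu(B(x,\rho_x))$) that is, up to a fixed factor, the largest possible; by the previous step $0<\rho_x\leq 2R/3$, and by near-maximality $\mu(E\cap B(x,3\rho))\leq\delta\mu(B(x,\rho))$ for all $\rho$ larger than a fixed multiple of $\rho_x$, in particular at $\rho=5\rho_x$. Applying the basic $5$-ball covering lemma to the bounded-radius family $\{B(x,3\rho_x)\}_x$ produces a countable pairwise disjoint subfamily $\{B(x_i,3\rho_i)\}_i$ with $E\subseteq\bigcup_iB(x_i,15\rho_i)$ up to a null set. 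Then, using successively the covering, the near-maximality bound at radius $5\rho_i$, the doubling inequality (note $5\rho_i<2\cdot3\rho_i$), the comparison between $B(x_i,3\rho_i)$ and $B(x_i,3\rho_i)\cap B(y,R)$, disjointness, and the inclusion $B(x_i,3\rho_i)\cap B(y,R)\subseteq E_\delta$ (valid since $x_i\in B(y,R)$ and $\rho_i$ is admissible), one chains
\begin{align*}
\mu(E)&\leq\sum_i\mu\big(E\cap B(x_i,15\rho_i)\big)\leq\delta\sum_i\mu\big(B(x_i,5\rho_i)\big)\\
&\leq C\delta\sum_i\mu\big(B(x_i,3\rho_i)\cap B(y,R)\big)\leq C\delta\,\mu(E_\delta),
\end{align*}
which is the asserted inequality $\mu(E_\delta)\geq(C\delta)^{-1}\mu(E)$.

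The step I expect to be the crux is the comparison $\mu(B(x_i,3\rho_i))\lesssim\mu(B(x_i,3\rho_i)\cap B(y,R))$: one must know that each selected ball, once intersected with $B(y,R)$, still carries a definite proportion of its mass, and one must control the resulting constant precisely enough to land on $(C_d\delta)^{-1}$. This is exactly where the hypothesis $E_\delta\neq B(y,R)$ --- equivalently the bound $3\rho_i<2R$ --- is indispensable: since $x_i\in B(y,R)$ while $3\rho_i$ is comparable to $R$, the ball $B(x_i,3\rho_i)$ cannot be pushed entirely outside $B(y,R)$, and a doubling argument based at $x_i$ recovers a fixed fraction of its measure inside $B(y,R)$. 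Keeping the bookkeeping sharp through this step (possibly by refining the radius selection so that the selected balls are comparable to balls contained in $B(y,R)$) is the delicate, and genuinely metric, part of the Krylov--Safonov scheme.
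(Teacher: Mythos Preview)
The paper does not supply its own proof of this lemma: it is quoted verbatim from Kinnunen--Shanmugalingam \cite{KS} and introduced with the sentence ``we report the Krylov--Safonov covering Theorem \ldots\ which has been proven in \cite{KS}''. There is therefore nothing in the present paper to compare your argument against.

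As to your proposal itself: the overall strategy --- Lebesgue density to get $E\subset E_\delta$ up to a null set, a near-maximal admissible radius at each density point, the $5r$-covering lemma, and the chain of estimates --- is exactly the standard Vitali-based Krylov--Safonov scheme and is the route taken in \cite{KS}. You are also right that the step
\[
\mu\big(B(x_i,3\rho_i)\big)\;\lesssim\;\mu\big(B(x_i,3\rho_i)\cap B(y,R)\big)
\]
is the only nontrivial place. However, your heuristic for it is not quite right: $3\rho_i$ need \emph{not} be comparable to $R$ (it can be arbitrarily small), so the sentence ``since $x_i\in B(y,R)$ while $3\rho_i$ is comparable to $R$'' does not justify the comparison. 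The genuine obstruction is that the centers $x_i$ may sit arbitrarily close to $\partial B(y,R)$ while $\rho_i$ stays moderate, so $B(x_i,3\rho_i)$ can protrude with no uniform doubling bound on the lost mass. In the argument of \cite{KS} this is handled by applying the Vitali lemma to the \emph{small} balls $B(x,\rho_x)$ (with $\rho_x$ chosen so that $2\rho_x$ is already inadmissible), so that a single application of doubling gives $\mu(E\cap B(x_i,6\rho_i))\leq C_d\,\delta\,\mu(B(x_i,\rho_i))$ and the disjoint pieces $B(x_i,\rho_i)\cap B(y,R)\subset E_\delta$ are summed; tracking the constant carefully in this version is what produces the single factor $C_d$ rather than a power of it. Your sketch, with Vitali applied to the $3\rho_x$-balls and the comparison deferred to the end, would as written only yield $\mu(E_\delta)\geq (C\delta)^{-1}\mu(E)$ for some $C=C(C_d)$.
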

At this point, we are ready to state the following theorem.
\begin{theorem}\label{Theorem 7.1KS}
We suppose that $u > 0$ and $-u$ satisfies \eqref{5.4}. Then there exist two constants 
$C$ and $\sigma> 0$  such that
\begin{equation}
\essinf_{B(y, 3R)} u \geq C \left(\dashint_{B(y, R)}u^{\sigma} \, d\mu\right)^{\frac{1}{\sigma}}
\end{equation}
for every $B(y, R)$ with $B(y, 6R) \subset \Omega$ and $0 < R \leq \frac{R(y)}{6}$. The
constants $C$ and $\sigma$ are independent of the ball $B(y,R)$.
\end{theorem}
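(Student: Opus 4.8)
The plan is to follow the classical Krylov–Safonov/De Giorgi–Nash–Moser route adapted to the metric setting, exactly as in \cite{KS}, using the two ingredients already established: the measure-theoretic lower bound in Remark \ref{Remark 6.3 KS} (if $u\geq\tau$ on a set of definite proportion of $B(y,R)$, then $\essinf_{B(y,3R)}u\geq\lambda\tau$ with $\lambda\in(0,1)$) and the Krylov–Safonov covering Lemma \ref{Lemma 7.2KS}. First I would fix a ball $B(y,R)$ with $B(y,6R)\subset\Omega$ and normalize; set $t=\essinf_{B(y,3R)}u$. For a threshold $\tau>0$ consider the superlevel set $E=\{x\in B(y,R):u(x)\geq\tau\}$ and its enlargement $E_\delta$ from \eqref{Edelta} with a suitably chosen $\delta\in(0,1)$ (the $\delta$ coming from Remark \ref{Remark 6.3 KS}). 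The dichotomy in Lemma \ref{Lemma 7.2KS} says that either $E_\delta=B(y,R)$ or $\mu(E_\delta)\geq(C_d\delta)^{-1}\mu(E)$. In the first case, every point of $B(y,R)$ sits in some $B(x,3\rho)$ on which $u\geq\tau$ on a $\delta$-proportion of $B(x,\rho)$, hence by Remark \ref{Remark 6.3 KS} (applied on the small ball, using the doubling property to pass between $B(x,\rho)$ and $B(x,6\rho)$) we get a lower bound $u\geq\lambda\tau$ on a definite part of $B(x,R)$ and therefore control $t$ from below in terms of $\tau$.

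The heart of the argument is to iterate this dichotomy. I would define the decreasing sequence of thresholds $\tau_j=t\,\lambda^{-j}$ (or rather, run the iteration so that each time the ``good case'' fails we multiply the measure of the superlevel set by the fixed factor $(C_d\delta)^{-1}>1$) and track $\mu(\{u\geq\tau_j\}\cap B(y,R))$. Since this measure cannot exceed $\mu(B(y,R))$, after finitely many steps — a number controlled only by $C_d,\delta,\lambda$ — the ``bad case'' $E_\delta=B(y,R)$ must occur, which forces $t=\essinf_{B(y,3R)}u\geq c\,\tau_j$ for that index. Rewriting this as a bound on the distribution function of $u$ on $B(y,R)$ gives $\mu(\{x\in B(y,R):u(x)>\tau\})\leq C\,(t/\tau)^{\sigma}\mu(B(y,R))$ for some $\sigma>0$ and all $\tau>t$, with $C,\sigma$ depending only on the structural constants. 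This is the weak-type estimate one needs.

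From the weak-type distributional bound the $L^\sigma$-estimate follows by the standard layer-cake integration: for $0<\sigma$ small enough (we may take the same $\sigma$ produced above, shrinking it if necessary),
\begin{align*}
\dashint_{B(y,R)}u^{\sigma}\,d\mu
&=\sigma\int_0^\infty \tau^{\sigma-1}\frac{\mu(\{u>\tau\}\cap B(y,R))}{\mu(B(y,R))}\,d\tau\\
&\leq t^{\sigma}+C\sigma\,t^{\sigma}\int_t^\infty \tau^{-1-\epsilon}\,d\tau\cdot t^{\epsilon}\leq C\,t^{\sigma},
\end{align*}
provided $\sigma$ is chosen strictly below the decay exponent, which makes the tail integral converge. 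Taking $\sigma$-th roots yields $\bigl(\dashint_{B(y,R)}u^{\sigma}\,d\mu\bigr)^{1/\sigma}\leq C\,t = C\,\essinf_{B(y,3R)}u$, which is the claim.

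The main obstacle I expect is the bookkeeping in the covering/iteration step: making sure the radii $3\rho$ appearing in \eqref{Edelta} stay within the range where Remark \ref{Remark 6.3 KS} applies (i.e.\ $B(x,6\cdot 3\rho)\subset\Omega$ and $3\rho\leq R(x)/6$, which is why the hypothesis is stated with $B(y,6R)\subset\Omega$), and verifying that the factor $(C_d\delta)^{-1}$ in Lemma \ref{Lemma 7.2KS} is genuinely $>1$ for the $\delta$ dictated by the geometry, so that the iteration must terminate after a controlled number of steps; once those quantitative thresholds are pinned down, the rest is the routine layer-cake computation above. This is precisely the scheme of \cite[proof of Theorem 7.1]{KS}, and since all the $p$-energy estimates used there have been replaced by their $(p,q)$-analogues \eqref{5.4} and \eqref{twocases} with constants depending additionally on $\alpha,\beta$, the argument transfers verbatim with the constants $C,\sigma$ now also depending on $\alpha,\beta$.
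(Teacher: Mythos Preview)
Your proposal is correct and follows essentially the same route as the paper: iterate the measure-density lower bound of Remark \ref{Remark 6.3 KS} via the Krylov--Safonov covering Lemma \ref{Lemma 7.2KS} (the paper indexes the iteration by the sets $A_{t,j}=\{x\in B(y,R):u(x)\geq t\lambda^j\}$ and fixes $0<\delta<1/C_d$ so that $(C_d\delta)^{-1}>1$, exactly the point you flagged), obtain the weak-type distributional bound $\mu(A_{t,0})/\mu(B(y,R))\leq C\,t^{-1/\gamma}t_0^{1/\gamma}$, and finish with the same layer-cake integration you wrote out. The only cosmetic difference is that the paper parametrizes the levels as $t\lambda^j$ rather than your $t\lambda^{-j}$, but the two are equivalent.
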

\begin{proof}
Let $0 < \delta,  \lambda< 1$ given as in Remark \ref{Remark 6.3 KS}. We define
$$A_{t,j}=\{x \in B(y,R): u(x) \geq t \lambda^j\}, \quad \mbox{where $t > 0$ and $j\in \mathbb{N}\cup \{0\}$}.$$ We apply Krylov-Safonov covering Theorem (Lemma \ref{Lemma 7.2KS}) with $E = A_{t,j-1}$. If there is a point $x \in B(y, R)$ and $\rho > 0$ so that $$\mu(A_{t,j-1} \cap B(x, 3\rho))\geq \delta \mu(B(x, \rho))),$$
then, by the doubling property, $$\mu(A_{t,j-1} \cap B(x, 6\rho))\geq \frac{\delta}{C_d^3} \mu(B(x, 6\rho))),$$
and, by Remark \ref{Remark 6.3 KS}, we have
$$\essinf_{B(x, 3\rho))} u \geq t \lambda^j.$$
Hence if $B(x, 3\rho)$ is as in \eqref{Edelta}, then $B(x, 3\rho)\cap B(y, R) \subset A_{t,j}$. This implies that $E_{\delta} \subset A_{t,j}$, consequently we have that $\mu(E_{\delta}) \leq \mu(A_{t,j})$. Hence we conclude that 
\begin{equation}\label{7.2KS}
\frac{1}{K_d\, \delta} \mu(A_{t,j-1}) \leq \mu(E_{\delta}) \leq \mu(A_{t,j})
\end{equation}or $E_{\delta}= B(y, R)$ and so, $A_{t,j}= B(y, R)$.
 
 We note that, since $u \in N^{1,q}_{loc}(\Omega)\subset L^q(\Omega)\subset L^1(\Omega)$, we have that $$\lim_{t \to +\infty} \mu(A_{t,0})=0.$$
If $A_{t,0} =B(y,R)$ for some $t>0$, then there exists $t_0 >0$ such that  \begin{equation}\label{Ato}
\mu(A_{t,0}) < \mu(B(y, R)) \quad\mbox{for all $t > t_0$} 
\end{equation} and  $\mu(A_{t,0}) = \mu(B(y, R))$ for all $t < t_0$. Let $0<\delta< \frac{1}{C_d}$. Now, if $t > t_0$, \eqref{Ato} implies that $$\frac{\mu(A_{t,0})}{\mu(B(y, R))} <1,$$ so we can choose an integer $n \geq 1$ such that $$(C_d \delta)^j \leq \frac{\mu(A_{t,0})}{\mu(B(y, R))} \leq (C_d \delta)^{n-1}.$$
Using \eqref{7.2KS}, we get the following chain of inequalities
\begin{equation*}
\mu(A_{t,n-1}) \geq \frac{1}{C_d \delta} \mu(A_{t,n-2}) \geq ... \geq \frac{1}{(C_d \delta)^{n-1}} \mu(A_{t,0}) \geq C_d \delta \mu(B(y, R)).
\end{equation*}
By Remark \ref{Remark 6.3 KS}, we see that 
\begin{equation*}
\essinf_{B(y, 3R)} u \geq C t \lambda^{n-1}.
\end{equation*}
This implies that
$$t_0= \essinf_{B(y, R)}u\geq \essinf_{B(y, 3R)} u \geq C t \lambda^{n-1}=C t(C_d \delta)^{\frac{(n-1)(\log \lambda)}{\log (C_d \delta)}}\geq C t \left(\frac{\mu(A_{t,0})}{\mu(B(y, R))}\right)^{\gamma},$$
where $\gamma= \frac{\log \lambda}{\log (C_d \delta)}$. Consequently we obtain the estimate
\begin{equation*}
\frac{\mu(A_{t,0})}{\mu(B(y, R))} \leq C t^{-\frac{1}{\gamma}} t_0^{\frac{1}{\gamma}}.
\end{equation*}
On the other hand, for $\sigma > 0$, we compute
\begin{align*}
\dashint_{\mu(B(y, R))} u^{\sigma} \, d\mu & = \frac{\sigma}{\mu(B(y, R))} \int_{0}^{+\infty} t^{\sigma -1} \mu(A_{t,0})\, dt \\ &\leq \frac{\sigma}{\mu(B(y, R))} \int_{t_0}^{+\infty} t^{\sigma -1} \mu(A_{t,0})\, dt + \sigma \int_0^{t_0} t^{\sigma -1} \, dt\\
&\leq C \int_{t_0}^{+\infty} t^{\sigma -1- \frac{1}{\gamma}}t_0^{\frac{1}{\gamma}} \, dt + t_0^{\sigma}.
\end{align*} If $\sigma < \frac{1}{\gamma}$, then 
\begin{equation*}
\dashint_{\mu(B(y, R))} u^{\sigma} \, d\mu \leq C t_0^{\frac{1}{\gamma}} \left(\frac{1}{\gamma}- \sigma\right)^{-1} t_0 ^{\sigma-\frac{1}{\gamma}}+ t_0^{\sigma} \leq C t_0^{\sigma},
\end{equation*}
and hence,
\begin{equation*}
t_0 \geq C \left(\dashint_{\mu(B(y, R))} u^{\sigma} \, d\mu \right)^{\frac{1}{\sigma}}.
\end{equation*} 

This completes the proof.
\end{proof}

Combining Theorem \ref{Theorem 4.2} with $k_0=0$ and Theorem \ref{Theorem 7.1KS} we deduce the Harnack inequality for $(p,q)$-quasiminimizers, thus Theorem \ref{Corollary 7.3 KS} is proven. 

In the particular case in which $u$ is a $(p,q)$-harmonic function, then Theorem \ref{Corollary 7.3 KS} becomes the next corollary.
\begin{corollary}\label{Corollary 7.3contin KS}
 Suppose that  $u$ is continuous, $u > 0$, $u$ and $-u$ satisfy \eqref{5.4}. Then there
exists a constant $C \geq 1$ so that
$$\sup_{B(y,R)} u \leq C \inf_{B(y,R)} u,$$
for every ball $B(y,R)$ for which $B(y, 6R) \subset \Omega$ and $0 < R \leq \frac{R(y)}{6}$. 
\end{corollary}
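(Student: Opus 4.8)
The plan is to deduce Corollary~\ref{Corollary 7.3contin KS} directly from Theorem~\ref{Corollary 7.3 KS} by observing that continuity removes the need to pass to essential suprema and infima. First I would recall that, by Definition~\ref{(p,q)-harmonic function}, a $(p,q)$-harmonic function is by hypothesis a continuous $(p,q)$-minimizer, hence in particular a $(p,q)$-quasiminimizer with $K=1$; by Lemma~\ref{lem 4.1} and Remark~\ref{Proposition 3.3KS} both $u$ and $-u$ then satisfy the De Giorgi type inequality \eqref{5.4}, so all hypotheses of Theorem~\ref{Corollary 7.3 KS} are met. Consequently
\begin{equation*}
\esssup_{B(y,R)} u \leq C\,\essinf_{B(y,R)} u
\end{equation*}
for every ball with $B(y,6R)\subset\Omega$ and $0<R\le \frac{R(y)}{6}$, with $C\ge 1$ independent of $u$.

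Next I would upgrade the essential supremum and infimum to the ordinary supremum and infimum. The point is that if $u$ is continuous on $\Omega$ and $B(y,R)$ is a ball with $\overline{B(y,R)}\subset\Omega$, then $\esssup_{B(y,R)}u=\sup_{B(y,R)}u$ and $\essinf_{B(y,R)}u=\inf_{B(y,R)}u$: since $\mu$ is doubling and Borel regular with $\mu(B)>0$ for every ball, every nonempty open subset of $B(y,R)$ has positive measure, so a value attained on such an open set cannot be discarded as a null set. More precisely, for any $x\in B(y,R)$ and any $\varepsilon>0$, continuity gives a radius $r>0$ with $B(x,r)\subset B(y,R)$ and $u>u(x)-\varepsilon$ on $B(x,r)$, and $\mu(B(x,r))>0$ forces $\essinf_{B(y,R)}u\le u(x)$; taking the infimum over $x$ yields $\essinf_{B(y,R)}u=\inf_{B(y,R)}u$, and the argument for the supremum is symmetric (applied to $-u$). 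Substituting these identities into the inequality from Theorem~\ref{Corollary 7.3 KS} gives
\begin{equation*}
\sup_{B(y,R)} u \leq C\,\inf_{B(y,R)} u,
\end{equation*}
which is exactly the claim, with the same constant $C$, hence still independent of $u$.

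The only mild subtlety, and the step I would treat most carefully, is the passage from essential to pointwise extrema: one must make sure $\overline{B(y,R)}\subset\Omega$ so that $u$ is genuinely continuous on a neighborhood of the closed ball, but this is guaranteed here because the hypothesis $B(y,6R)\subset\Omega$ is far stronger, and it also ensures $R(y)>0$ so that the constraint $0<R\le \frac{R(y)}{6}$ is compatible. No new estimates are needed; the corollary is a clean specialization of the Harnack inequality for quasiminimizers once continuity is invoked to identify the $L^\infty$ extrema with the topological ones.
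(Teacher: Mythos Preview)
Your proposal is correct and follows exactly the paper's (implicit) argument: the corollary is simply Theorem~\ref{Corollary 7.3 KS} specialized to continuous $u$, where the essential extrema coincide with the pointwise ones. One small remark: your opening paragraph detours through Definition~\ref{(p,q)-harmonic function} to deduce that $u$ and $-u$ satisfy \eqref{5.4}, but the corollary does not assume $u$ is $(p,q)$-harmonic---its hypotheses already state directly that $u>0$ and that $u$ and $-u$ satisfy \eqref{5.4}, so you may invoke Theorem~\ref{Corollary 7.3 KS} immediately and skip that passage entirely.
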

Here, we give two consequences of the Harnack inequality of Corollary \ref{Corollary 7.3contin KS}. Firstly, following the lines of Theorem 8.13 of \cite{BB}, we obtain the strong maximum principle for $(p,q)$-harmonic functions.
\begin{corollary}[Strong maximum principle] If $\Omega$ is connected, $u$ is a $(p,q)$-harmonic function
in $\Omega$ and $u$ attains its maximum in $\Omega$, then $u$ is constant in $\Omega$.
\end{corollary}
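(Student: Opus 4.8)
The plan is to argue by contradiction via a standard connectedness argument, using the Harnack inequality from Corollary \ref{Corollary 7.3contin KS} as the engine. Suppose $u$ attains its maximum $M = \sup_\Omega u$ at some point, and consider the set $U = \{x \in \Omega : u(x) = M\}$. Since $u$ is continuous, $U$ is relatively closed in $\Omega$, and it is nonempty by hypothesis. The heart of the matter is to show that $U$ is also open; then connectedness of $\Omega$ forces $U = \Omega$, i.e. $u \equiv M$.

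To prove $U$ is open, fix $x_0 \in U$ and choose $R>0$ small enough that $B(x_0, 6R) \subset \Omega$ and $0 < R \le \tfrac{R(x_0)}{6}$, which is possible since $\Omega$ is open. Apply the Harnack inequality to the nonnegative continuous function $v = M - u$, which is again $(p,q)$-harmonic (because $-u$ is a $(p,q)$-quasiminimizer by Remark \ref{Proposition 3.3KS}, adding a constant does not change the upper gradient, and $v$ is continuous), so that both $v$ and $-v$ satisfy \eqref{5.4}. Corollary \ref{Corollary 7.3contin KS} then gives
\begin{equation*}
\sup_{B(x_0,R)} v \le C \inf_{B(x_0,R)} v.
\end{equation*}
Since $v(x_0) = M - u(x_0) = 0$, the right-hand side is $0$, hence $v \equiv 0$ on $B(x_0, R)$, i.e. $u \equiv M$ on $B(x_0,R)$. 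Thus $B(x_0,R) \subset U$, proving $U$ is open.

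One technical point to address is that the Harnack inequality as stated presupposes $u>0$ (strictly), so a priori one cannot apply it to $v = M - u$ if $v$ vanishes somewhere; the clean fix is to apply it instead to $v + \varepsilon$ for each $\varepsilon > 0$ — still continuous, still $(p,q)$-harmonic, and strictly positive — obtaining $\sup_{B(x_0,R)}(v+\varepsilon) \le C\inf_{B(x_0,R)}(v+\varepsilon) \le C\varepsilon$, and then let $\varepsilon \to 0^+$ to conclude $\sup_{B(x_0,R)} v \le 0$, whence $v \equiv 0$ there. The main (and only real) obstacle is precisely this bookkeeping around the strict positivity and the invariance of the class of $(p,q)$-harmonic functions under $u \mapsto M - u + \varepsilon$; the rest is the routine clopen-set argument, exactly as in Theorem 8.13 of \cite{BB}. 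Finally, since $\Omega$ is connected and $U$ is nonempty, open and closed in $\Omega$, we get $U = \Omega$, so $u$ is constant in $\Omega$, completing the proof.
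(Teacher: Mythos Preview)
Your proposal is correct and follows essentially the same approach as the paper: both use the clopen argument on the set where the maximum is attained, applying the Harnack inequality (Corollary \ref{Corollary 7.3contin KS}) to $M-u$ (the paper normalizes $M=0$ and works with $-u$ directly). Your treatment of the strict-positivity hypothesis via the $\varepsilon$-shift is in fact more careful than the paper's, which applies the Harnack inequality to the merely nonnegative function $-u$ without comment.
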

\begin{proof} It is not restrictive to suppose that $\max_{\Omega} u = 0$.  We consider $$A=\{x \in \Omega : u(x)=0\}.$$ The continuity of $u$ implies that $A$ is a relatively closed subset of $\Omega$. Let $x_0 \in A$ and $x_0 \in B(y, R)$ such that  $B(y, 6 R) \subset \Omega$. We remark that $-u$ is a nonnegative $(p,q)$-quasiminimizer in $\Omega$ and that, by the hypothesis of continuity on $u$, $$\essinf_{B(y, R)} u=\inf_{B(y, R)} u \quad \mbox{and} \quad \esssup_{B(y, R)} u=\sup_{B(y, R)} u.$$ Thus, we can apply Harnack inequality (Corollary \ref{Corollary 7.3contin KS}) to $-u$ and get the following inequality.
\begin{equation*}\label{-inf}
-\inf_{B(y, R)} u= \sup_{B(y, R)} (-u) \leq  C \inf_{B(y, R)} (-u)= -C \sup_{B(y, R)} u=0,
\end{equation*}
and so, $$0 \leq \inf_{B(y, R)} u \leq \sup_{B(y, R)} u=0.$$ Hence $B(y, R) \subset A$, that is $A$ is open. Since $\Omega$  is connected, we get that $A=\Omega$.  Thus, $u(x)=0$ for all $x \in \Omega$, so $u$ is constant in $\Omega$.
\end{proof}

The second corollary of Harnack inequality is the Liouville's Theorem.
\begin{corollary}[Liouville's Theorem]\label{Corollary 8.16contin BB}
If $u$ is a $(p,q)$-harmonic function bounded from below in $\Omega$, then $u$ is constant.
\end{corollary}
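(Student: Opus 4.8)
The plan is to normalize $u$ to a nonnegative function and then squeeze it down to a constant by applying Harnack's inequality on balls of arbitrarily large radius. First I would set $m=\inf_\Omega u$, which is finite because $u$ is bounded from below, and pass to $v=u-m$. Since the minimal $q$-weak upper gradient is unchanged by adding a constant, $v$ is again a $(p,q)$-minimizer (the competitor argument for $u$ transfers verbatim to $v$ by shifting the competitors by $m$) and it is still continuous, hence $(p,q)$-harmonic; moreover $v\ge0$ and $\inf_\Omega v=0$. I would carry this out on $\Omega=X$, so that $R(y)=+\infty$ for every $y$ and the geometric restrictions $B(y,6R)\subset\Omega$ and $R\le R(y)/6$ in Corollary \ref{Corollary 7.3contin KS} become vacuous; this forces $X$ to be unbounded, which is exactly the situation in which a Liouville statement can hold. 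For each $\varepsilon>0$ the shifted function $v+\varepsilon$ is $(p,q)$-harmonic and strictly positive, so Corollary \ref{Corollary 7.3contin KS} is available for it on every ball $B(x_0,R)$.

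Then, fixing $x_0\in X$ and $R>0$, Harnack's inequality applied to $v+\varepsilon$ on $B(x_0,R)$ gives
\[
v(x_0)+\varepsilon\;\le\;\sup_{B(x_0,R)}(v+\varepsilon)\;\le\;C\inf_{B(x_0,R)}(v+\varepsilon)\;=\;C\Big(\inf_{B(x_0,R)}v+\varepsilon\Big),
\]
with $C$ independent of $R$, of $\varepsilon$ and of $x_0$. Because $\inf_\Omega v=0$ and $X$ is unbounded, for every $\delta>0$ there is a point at which $v<\delta$, and that point lies in $B(x_0,R)$ as soon as $R$ is large; hence $\inf_{B(x_0,R)}v\to0$ as $R\to+\infty$. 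Passing to the limit yields $v(x_0)+\varepsilon\le C\varepsilon$, and then letting $\varepsilon\to0$ gives $v(x_0)=0$. Since $x_0$ is arbitrary, $v\equiv0$, i.e. $u\equiv m$ is constant.

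The argument is short once Harnack's inequality is in hand, so there is no serious obstacle; the only points that need (routine) care are that a constant shift preserves $(p,q)$-harmonicity and that, for $\Omega=X$, the dilation conditions in Corollary \ref{Corollary 7.3contin KS} impose nothing, so Harnack's inequality holds on balls of all radii. If one instead wished to phrase the result for a general connected unbounded open set, the single large ball would be replaced by a finite chain of overlapping balls joining $x_0$ to a point where $v$ is small---such a chain existing inside $\Omega$ by quasiconvexity of $X$---and Harnack's inequality iterated along it; the constant then depends on the chain but not on $\varepsilon$, which still suffices to conclude.
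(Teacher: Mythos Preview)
Your proof is correct and follows essentially the same approach as the paper: subtract the infimum, then apply Harnack's inequality (Corollary \ref{Corollary 7.3contin KS}) on balls of radius $R\to+\infty$ to force the shifted function to vanish. Your $\varepsilon$-shift to secure strict positivity is a bit more careful than the paper, which applies the Harnack corollary directly to $v\ge 0$, but otherwise the argument is the same.
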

\begin{proof}
Let $v = u- \inf_{\Omega} u \geq 0$.
We note that, since $u$ is a $(p,q)$-harmonic function, $v$ is continuous itself. Thus, $$\essinf_{B(y, R)} v=\inf_{B(y, R)} v \quad \mbox{and} \quad \esssup_{B(y, R)} v=\sup_{B(y, R)} v.$$ 
Now, we can apply Harnack inequality (Corollary \ref{Corollary 7.3contin KS}) to get that, for $x \in \Omega$, 
$$v(x)\leq \sup_{B(x,\rho)}v \leq C\inf_{B(x,\rho)}v \to 0, \quad \mbox{as $\rho \to +\infty$.}$$
Thus $v \equiv 0$, so $u$ is constant.
\end{proof}

\section{Boundary continuity for $(p,q)$-quasiminimizers} \label{Sec6}
Up to this point, we have been interested in local properties, now we will focus our attention in boundary value problems. Thus, we have the need to introduce a boundary data $w$. 
More specifically, we consider a bounded domain $\Omega \subset X$ and $w \in N^{1,q}(X)$. Let $u \in N^{1,q}_{loc}(\Omega)$ be a $(p,q)$-quasiminimizer on $\Omega$ such that $w - u \in N^{1,q}_0(\Omega) $. The function $u$ is extended to all of $X$ by setting $u= w$ in $X \setminus \Omega$.

The following proposition, which is used in the proof of Proposition \ref{prop4.4},  is a consequence of the weak Poincar\'{e} inequality. It was first proven by Maz'ya, \cite{Maz} for the Euclidean setting, then generalized to the metric setting by Bj\"{o}rn, \cite{B}. We refer the reader to the corresponding references for more details. 
\begin{proposition}\label{Proposition3.2}
	Let $u\in N^{1,q}(X)$, $B(y,r)\subset X$ and $S=\{x\in B\left(y,\frac{r}{2}\right) : u(x)=0\}$. Then there exist $C$ and $\lambda \geq 1$ such that $$\dashint_{B(y,r)}|u|^qd\mu \leq \frac{C}{{{\rm cap}}_q(S,B(y,r))}\int_{B(y,\lambda r)}g_u^q\, d\mu.$$
\end{proposition}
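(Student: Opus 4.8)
The plan is to reduce the statement to the defining (capacity-minimizing) property of the variational capacity together with the self-improving Poincaré inequality. First I would fix the ball $B(y,r)$ and the set $S=\{x\in B(y,r/2):u(x)=0\}$, and observe that we may assume $u$ does not vanish identically on $B(y,r/2)$, since otherwise the left-hand side is $0$ and there is nothing to prove; we may also assume $\mathrm{cap}_q(S,B(y,r))>0$, as the inequality is vacuous otherwise. The key observation is that the (truncated or rescaled) function $v$ built from $u$ — roughly $v=1-|u|/\|u\|$ on a suitable scale, or more robustly a McShane-type Lipschitz-in-$u$ modification such as $v=\min\{1,\,\operatorname{dist}(\,\cdot\,,\{u=0\})\text{-type cutoff}\}$ — is admissible for the capacity $\mathrm{cap}_q(S,B(y,r))$ once its support is arranged to lie in $B(y,r)$ and it equals $1$ on $S$. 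Concretely I would take $v = (\,1 - |u|/t\,)_+ \,\eta$ with $\eta$ a Lipschitz cutoff that is $1$ on $B(y,r/2)$ and supported in $B(y,r)$, for a parameter $t>0$ to be optimized; then $v\ge 1$ on $S\cap B(y,r/2)$, $v\in N^{1,q}_0(B(y,r))$, and $g_v \le \frac1t g_u\chi_{\{|u|<t\}} + |v|\,g_\eta$.

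Next I would use the admissibility of $v$: by definition of the variational capacity,
\begin{equation*}
\mathrm{cap}_q(S,B(y,r)) \le \int_{B(y,r)} g_v^q\,d\mu \le C\Bigl(\frac{1}{t^q}\int_{B(y,r)} g_u^q\,d\mu + \int_{B(y,r)} \frac{|v|^q}{r^q}\,d\mu\Bigr).
\end{equation*}
The second term must now be controlled. Here is where the weak $(1,q)$-Poincaré inequality (available through Theorem \ref{sstars}, Corollary \ref{coropoinca} and Remark \ref{rem1poin}, since $q<p^*$ and the space supports a $(1,p)$-Poincaré inequality) enters: applying it to $v$ on $B(y,r)$, together with the fact that $v$ vanishes on the fixed-proportion subset $S$ so that a Sobolev–Poincaré (zero-boundary-value type, Lemma \ref{Lemma 2.1 KS} applied to $v-v_{B}$ or directly to $v$ since $\mu(\{v\neq 0\})$ can be compared to $\mu(B(y,r))$ when $\mu(S)$ is nondegenerate — more carefully, one uses that $v=0$ on $S$ and a Maz'ya-type inequality), yields $\dashint_{B(y,r)}|v|^q\,d\mu \le C r^q \dashint_{B(y,\lambda r)} g_v^q\,d\mu$. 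Iterating/absorbing this back gives $\int_{B(y,r)}|v|^q\,d\mu/r^q \le C\,t^{-q}\int_{B(y,\lambda r)} g_u^q\,d\mu$ up to constants, so altogether $\mathrm{cap}_q(S,B(y,r)) \le C\,t^{-q}\int_{B(y,\lambda r)} g_u^q\,d\mu$. Finally I would relate $t$ to the $L^q$-average of $u$: choosing $t \sim \bigl(\dashint_{B(y,r)}|u|^q\,d\mu\bigr)^{1/q}$, one checks (via Chebyshev and the Poincaré/Sobolev inequality for $u$ on $B(y,r)$) that this is a legitimate, non-degenerate choice, and rearranging the resulting inequality $\mathrm{cap}_q(S,B(y,r)) \le C\,\dashint_{B(y,r)}|u|^q\,d\mu^{-1}\cdot(\text{stuff})$ gives exactly the claimed bound.

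The main obstacle I anticipate is the bookkeeping in the choice of $t$ and the careful use of the Sobolev/Poincaré inequality for $v$ on the annular region: one needs the measure of $\{v=0\}$ (which contains $S$) to be a fixed fraction of $\mu(B(y,r))$ in order to invoke Lemma \ref{Lemma 2.1 KS}, but $\mu(S)$ need not be large in general — so the correct route is to avoid requiring $\mu(S)$ large and instead use the genuine Maz'ya capacitary inequality $\dashint_{B(y,r)}|v|^q\,d\mu \le \frac{C r^q}{\mathrm{cap}_q(\{v=0\}\cap B(y,r/2),B(y,r))}\dashint_{B(y,\lambda r)}g_v^q$, which is itself the statement being proven — hence the cleanest path is actually to cite the Euclidean result of Maz'ya \cite{Maz} and its metric generalization by Björn \cite{B} directly, as the statement already advertises, and to present only the short reduction showing our $\mathrm{cap}_q$ and our normalization match theirs. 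So in the write-up I would keep the proof brief: state that it follows from \cite{Maz,B} applied with $v$ as above, spelling out the admissibility of $v$ for the capacity and the chain of inequalities, and leaving the capacitary-Poincaré estimate itself to the cited references.
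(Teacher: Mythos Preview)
The paper does not give its own proof of this proposition: it simply states the result and refers the reader to Maz'ya \cite{Maz} for the Euclidean case and Bj\"orn \cite{B} for the metric setting. Your final recommendation --- to keep the proof brief and cite \cite{Maz,B} directly --- is therefore exactly what the paper does.

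Your preliminary sketch is in the right spirit (the test function $v=(1-|u|/t)_+\,\eta$ is indeed Maz'ya's device), but the gap you yourself identify is real: controlling the cutoff term $\int_{B(y,r)}|v|^q/r^q$ via a Poincar\'e-type inequality for $v$ without already having the capacitary estimate is the nontrivial part of the argument, and neither Lemma~\ref{Lemma 2.1 KS} (which needs $\mu(\{v=0\})$ to be a fixed fraction of $\mu(B)$) nor a naive absorption works in general. The proof in \cite{B} handles this differently, so your instinct to defer to the references rather than push the sketch through is the correct call.
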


Let $w\in N^{1,q}(X)$ such that $u-w\in N^{1,q}_{0}(\Omega)$.
We shall use the notation
\begin{equation*}
M(r,r_{0})=\Big(\esssup_{B(x_{0},r)} u -\esssup_{B(x_{0},r_{0})} w\Big)_{+},
\end{equation*}
where $0<r\leq r_{0}$, $a_{+}=\max\lbrace a,0\rbrace$, and $u\in N^{1,q}(X)$ is a $(p,q)$-quasiminimizer in $\Omega$. Let also
\begin{equation}
\gamma(s,r)=\frac{r^{-s}\mu(B(x_{0},r))}{{\rm cap}_{s}(B(x_{0},r)\setminus\Omega, B(x_{0},2r))}.
\end{equation}
The next proposition is important in order to prove Theorem \ref{Theorem2.11}, which gives a pointwise estimate for $(p,q)$-quasiminimizers near a boundary point. 

\begin{proposition}\label{prop4.4} Let $u\in N^{1,q}(X)$ be a $(p,q)$-quasiminimizer on $\Omega$ and $w\in N^{1,q}(X)$ with $u-w\in N^{1,q}_{0}(\Omega)$. Then there exist $C=C(\alpha, \beta, C_d, C_{PI}, K, p, Q)$ and $\lambda\geq 1$ such that, for all $x_{0}\in\partial\Omega$ and $0<2\lambda r\leq r_{0}<R(y)$, the next inequality holds
\begin{equation*}
M\left(\frac{r}{2}, r_{0}\right)\leq (1-2^{-n(r)-1})M(2\lambda r, r_{0}),
\end{equation*}
where
\begin{equation*}
n(r)=C\gamma\left(s,\frac{r}{2}\right)^{\frac{p}{p-s}}.
\end{equation*}
\end{proposition}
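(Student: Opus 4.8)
The plan is to reproduce the interior oscillation–decay scheme of Theorem \ref{Theorem 5.2}, with the automatic measure–density information used there replaced by the capacitary information about $X\setminus\Omega$ carried by $\gamma(s,\cdot)$.

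First I would reduce and set up. We may assume $M:=M(2\lambda r,r_0)>0$, since otherwise monotonicity of the essential supremum gives $M(r/2,r_0)\le M(2\lambda r,r_0)=0$. Put $\ell=\esssup_{B(x_0,r_0)}w$, so $M=\esssup_{B(x_0,2\lambda r)}u-\ell$, and keep the extension $u=w$ on $X\setminus\Omega$. For $j\ge0$ set $k_j=\esssup_{B(x_0,2\lambda r)}u-2^{-j-1}M$; these increase to $\esssup_{B(x_0,2\lambda r)}u$ and every $k_j\ge\ell+\tfrac M2>\ell$. The key observation is that for any $h\ge\ell$ the truncation $v=\min\{u,k\}-\min\{u,h\}$ (in particular every $(u-k_j)_+$) vanishes $\mu$-a.e., indeed $q$-q.e., on $B(x_0,r_0)\setminus\Omega\supset B(x_0,2\lambda r)\setminus\Omega$, because there $u=w\le\ell\le h$. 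This makes the comparison functions of Lemma \ref{lem 4.1} admissible, so the De Giorgi inequality \eqref{5.4} — and therefore the weak Harnack inequality of Theorem \ref{Theorem 4.2} — is valid on balls $B(x_0,\rho)$ with levels $k\ge\ell$; and it gives $\{v=0\}\cap B(x_0,\rho/2)\supset B(x_0,\rho/2)\setminus\Omega$, a set of positive $s$-capacity.

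Next I would prove a capacitary version of the De Giorgi estimate: for $\ell\le h<k$, with $S_{k,\rho}=\{x\in B(x_0,\rho):u(x)>k\}$ and $0<2\lambda\rho\le r_0<R(y)$,
\begin{align*}
(k-h)\mu(S_{k,\rho})\le{}&C\,\gamma\Big(s,\tfrac\rho2\Big)^{\frac1s}\mu(B(x_0,\rho))^{1-\frac1s}\\
&\times\big(\mu(S_{h,\lambda\rho})-\mu(S_{k,\lambda\rho})\big)^{\frac1s-\frac1l}\Big(\int_{S_{h,2\lambda\rho}}(u-h)^l\,d\mu\Big)^{\frac1l},
\end{align*}
for $l=p$ or $l=q$, which is the exact boundary analogue of \eqref{twocases}. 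One copies the computation leading to \eqref{twocases}, with the single change that after $\int_{B(x_0,\rho)}v\le\mu(B(x_0,\rho))^{1-1/s}\big(\int v^s\big)^{1/s}$ one invokes the capacitary Sobolev–Poincar\'e inequality — the exponent-$s$ form of Proposition \ref{Proposition3.2}, whose proof uses only the weak $(1,s)$-Poincar\'e inequality and doubling — applied to $\{v=0\}\cap B(x_0,\rho/2)\supset B(x_0,\rho/2)\setminus\Omega$, instead of Lemma \ref{Lemma 2.1 KS}. Since $\mathrm{cap}_s(B(x_0,\rho/2)\setminus\Omega,B(x_0,\rho))^{-1}\le C\rho^s\gamma(s,\rho/2)\mu(B(x_0,\rho))^{-1}$ by the definition of $\gamma$ and doubling, this yields the factor $\mu(B(x_0,\rho))^{1-1/s}\rho\,\gamma(s,\rho/2)^{1/s}$ in place of $R\,\mu(B(x_0,R))^{1-1/s}$; the De Giorgi inequality \eqref{5.4} on $B(x_0,\lambda\rho)\subset B(x_0,2\lambda\rho)$, H\"older's inequality and $\alpha\le a,b\le\beta$ then absorb the extra $\rho$ exactly as before.

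Finally I would iterate and conclude. Running the telescoping argument of Lemma \ref{Proposition 5.1} with the levels $k_j$ above ($M-k_{j-1}=2^{-j}M$, $k_j-k_{j-1}=2^{-j-1}M$) and $\rho=r$, one gets, as in \eqref{5.3KS},
\begin{equation*}
n\,\mu(S_{k_n,r})^{\frac{sl}{l-s}}\le C\,\gamma\Big(s,\tfrac r2\Big)^{\frac{l}{l-s}}\mu(B(x_0,r))^{\frac{sl}{l-s}},\qquad\text{so}\qquad\frac{\mu(S_{k_n,r})}{\mu(B(x_0,r))}\le \frac{C\,\gamma(s,r/2)^{1/s}}{n^{(l-s)/(sl)}}.
\end{equation*}
Since $(l-s)/(sl)\ge(p-s)/(sp)$ and $\gamma(s,r/2)\ge c>0$, choosing $n(r)=C\gamma(s,r/2)^{p/(p-s)}$ with a suitable $C=C(\alpha,\beta,C_d,C_{PI},K,p,Q)$ makes $C\big(\mu(S_{k_{n(r)},r})/\mu(B(x_0,r))\big)^{1/q}<\tfrac12$. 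Then the weak Harnack inequality of Theorem \ref{Theorem 4.2} on $B(x_0,r)$ with level $k_{n(r)}$ (legitimate by the first step), together with $(u-k_{n(r)})_+\le\esssup_{B(x_0,2\lambda r)}u-k_{n(r)}=2^{-n(r)-1}M$ on $B(x_0,r)$, gives
\begin{equation*}
\esssup_{B(x_0,r/2)}u\le k_{n(r)}+C\big(\esssup_{B(x_0,2\lambda r)}u-k_{n(r)}\big)\Big(\tfrac{\mu(S_{k_{n(r)},r})}{\mu(B(x_0,r))}\Big)^{\frac1q}\le k_{n(r)}+\tfrac12\,2^{-n(r)-1}M,
\end{equation*}
whence $\esssup_{B(x_0,r/2)}u-\ell\le(1-2^{-n(r)-1})M$ after absorbing the harmless factor $2$ into the constant defining $n(r)$; since the left-hand side also bounds $M(r/2,r_0)=(\esssup_{B(x_0,r/2)}u-\ell)_+$, this is the claim. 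The main obstacle is the capacitary De Giorgi estimate of the second step: realizing that the free measure–density Sobolev inequality of the interior theory has to be traded for Proposition \ref{Proposition3.2} applied to $B(x_0,\rho/2)\setminus\Omega$, and propagating the resulting $\gamma$-factor with exactly the exponent $1/s$ so that the iteration produces $n(r)\sim\gamma(s,r/2)^{p/(p-s)}$; a secondary technical point is verifying that \eqref{5.4} and Theorem \ref{Theorem 4.2} remain valid for a ball centered at $x_0\in\partial\Omega$ once the truncation level exceeds $\esssup_{B(x_0,r_0)}w$.
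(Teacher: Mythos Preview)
Your proposal is correct and follows essentially the same route as the paper's proof: define the dyadic levels $k_j$ above $\esssup_{B(x_0,r_0)}w$, use the exponent-$s$ capacitary Poincar\'e inequality (Proposition~\ref{Proposition3.2}) on $B(x_0,\rho/2)\setminus\Omega$ in place of the interior Sobolev lemma to get a $\gamma(s,r/2)$-weighted analogue of~\eqref{twocases}, telescope as in Lemma~\ref{Proposition 5.1} to obtain $\mu(S_{k_n,r})/\mu(B(x_0,r))\le C\gamma(s,r/2)^{1/s}n^{-(l-s)/(sl)}$, and finish with Theorem~\ref{Theorem 4.2}. The only cosmetic differences are your shifted indexing ($k_j$ here equals the paper's $k_{j+1}$) and your route via $\int v\le\mu(B)^{1-1/s}(\int v^s)^{1/s}$ rather than the paper's direct $(k-h)^s\mu(S_{k,\rho})\le\int v^s$, which changes intermediate exponents but leads to the identical threshold $n(r)=C\gamma(s,r/2)^{p/(p-s)}$; your explicit flagging of why \eqref{5.4} and Theorem~\ref{Theorem 4.2} remain valid at a boundary ball is in fact more careful than the paper, which invokes them without comment.
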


\begin{proof}
We denote $M=M(2\lambda r, r_{0})$, where $x_{0}$ and $r_{0}$ are fixed. Without loss of generality, we can assume that $0< M < +\infty$, otherwise the proof is finished. We define
\begin{align*}
k_{j}&=\esssup_{B(x_{0},r_{0})} w+M(1-2^{-j}),\\
v_{j}&=(u-k_{j})_{+}-(u-k_{j+1})_{+}.
\end{align*}
Notice that in $B(x_{0},\lambda r)\setminus\Omega$ we have $v_{j}=0$, where $\lambda$ is given by  Proposition \ref{Proposition3.2}. Define $T(k,l,r)=S_{k,r}\setminus S_{l,r}$, then $g_{u}\chi_{T(k_{j},k_{j+1},\lambda r)}$ is a $p$-weak upper gradient of $v_{j}$ in $B(x_{0},\lambda r)$. As a consequence of doubling property of $\mu$, H\"older inequality and Proposition \ref{Proposition3.2} with $s$ instead of $q$, we get that
\begin{align*}
\int_{B(x_{0},r)}v_{j}^{s} \, d\mu&\leq\frac{C\mu(B(x_{0},r))}{{\rm cap}_{s}\left(B\left(x_{0},\frac{r}{2}\right)\setminus\Omega, B(x_{0},r)\right)}\int_{B(x_{0},\lambda r)}g_{v_{j}}^{s}\, d\mu\\
&= C\gamma\left(s,\frac{r}{2} \right) r^{s}\int_{S_{k_{j},\lambda r}}g_{u}^{s}\chi_{T(k_{j},k_{j+1},\lambda r)}^{s}\, d\mu\\
&\leq C\gamma\left(s,\frac{r}{2}\right) r^{s}\left(\int_{S_{k_{j},\lambda r}}g_{u}^{l} \, d\mu\right)^{\frac{s}{l}}\mu(T(k_{j},k_{j+1},\lambda r))^{1-\frac{s}{l}},
\end{align*}
with $C$ depending on $C_d$, and therefore,
\begin{equation}\label{eq1prop4.4}
\int_{B(x_{0},r)}v_{j}^{s}\,d\mu\leq C\gamma\left(s,\frac{r}{2}\right)r^{s}\left(\int_{S_{k_{j},\lambda r}}g_{u}^{l}\, d\mu\right)^{\frac{s}{l}}\mu(T(k_{j},k_{j+1},\lambda r))^{1-\frac{s}{l}},
\end{equation}
for $l=p,q$.

For the measure of the set $S_{k_{j+1}, r}$, we have the following estimate.
\begin{equation*}
\int_{B(x_{0},r)}v_{j}^{s}\, d\mu\geq (k_{j+1}-k_{j})^{s}\mu(S_{k_{j+1}, r})=\frac{M^{s}}{2^{s(j+1)}}\mu(S_{k_{j+1}, r}).
\end{equation*}
By the De Giorgi type inequality, H\"older inequality and the doubling property, we have 
\begin{align*}
\int_{S_{k_{j},\lambda r}}g_{u}^{l}\, d\mu&\leq\int_{S_{k_{j},\lambda r}}(ag_{u}^{p}+bg_{u}^{q})\, d\mu\\ &\leq\frac{C}{(\lambda r)^{p}}\int_{S_{k_{j},2\lambda r}}a(u-k_{j})^{p}_{+}\, d\mu+\frac{C}{(\lambda r)^{q}}\int_{S_{k_{j},2\lambda r}}b(u-k_{j})^{q}_{+}\, d\mu\\ 
&\leq\frac{C}{(\lambda r)^{p}}\int_{S_{k_{j},2\lambda r}}(u-k_{j})^{p}_{+}\, d\mu+\frac{C}{(\lambda r)^{q}}\int_{S_{k_{j},2\lambda r}}(u-k_{j})^{q}_{+}\, d\mu\\ 
&\leq \frac{C}{(\lambda r)^{p}}\Big(\esssup_{B(x_{0},2\lambda r)}u-k_{j}\Big)^{p}\mu(B(x_{0},2\lambda r))\\ &\quad+\frac{C}{(\lambda r)^{q}}\Big(\esssup_{B(x_{0},2\lambda r)}u-k_{j}\Big)^{q}\mu(B(x_{0},2\lambda r))\\ 
&\leq \frac{C}{(\lambda r)^{p}}\frac{M^{p}}{2^{jp}}\mu(B(x_{0},2\lambda r))+\frac{C}{(\lambda r)^{q}}\frac{M^{q}}{2^{jq}}\mu(B(x_{0},2\lambda r))\\ &\leq \frac{C}{r^{p}}\frac{M^{p}}{2^{jp}}\mu(B(x_{0}, r))+\frac{C}{r^{q}}\frac{M^{q}}{2^{jq}}\mu(B(x_{0}, r))\\
&\leq C\max\left\{\left(\frac{M}{r2^{j}}\right)^{p}, \left(\frac{M}{r2^{j}}\right)^{q}\right\}\mu(B(x_{0}, r)), 
\end{align*}
for $l=p,q$, where $C$ now depends also on $\beta$, $K$ and $p$. Thus,
\begin{equation}\label{eq2prop4.4}
\int_{S_{k_{j},\lambda r}}g_{u}^{l}\, d\mu\leq C\max\left\lbrace\left(\frac{M}{r2^{j}}\right)^{p}, \left(\frac{M}{r2^{j}}\right)^{q}\right\rbrace\mu(B(x_{0}, r)), \quad \mbox{for $l=p,q$.}
\end{equation}
Using the last three inequalities, we get the following two cases.\\
\noindent \underline{Case 1:} $\max\lbrace\left(\frac{M}{r2^{j}}\right)^{p}, \left(\frac{M}{r2^{j}}\right)^{q}\rbrace=\left(\frac{M}{r2^{j}}\right)^{p}$. \\
Then, using $l=p$ in (\ref{eq1prop4.4}) and (\ref{eq2prop4.4}):
\begin{align*}
\frac{\mu(S_{k_{j+1},r})}{\mu(B(x_{0}, r))}&\leq\frac{2^{s(j+1)}}{\mu(B(x_{0}, r))M^{s}}\int_{B(x_{0}, r)}v_{j}^{s}\, d\mu\\
&\leq\frac{2^{s(j+1)}C}{\mu(B(x_{0}, r))M^{s}}  \gamma\left(s,\frac{r}{2}\right) r^{s}\left(\int_{S_{k_{j},\lambda r}}g_{u}^{p}\, d \mu\right)^{\frac{s}{p}}\mu(T(k_{j},k_{j+1},\lambda r))^{1-\frac{s}{p}}\\
&\leq\frac{2^{s(j+1)}C}{\mu(B(x_{0}, r))M^{s}}\gamma\left(s,\frac{r}{2}\right) r^{s}\left(\frac{M}{r2^{j}}\right)^{s}\mu(B(x_{0}, r))^{\frac{s}{p}} \mu(T(k_{j},k_{j+1},\lambda r))^{1-\frac{s}{p}}\\
&=C \gamma\left(s,\frac{r}{2}\right)\left(\frac{\mu(T(k_{j},k_{j+1},\lambda r))}{\mu(B(x_{0}, r))}\right)^{1-\frac{s}{p}}.
\end{align*}
Therefore,
\begin{equation}\label{eq3prop4.4}
\frac{\mu(S_{k_{j+1},r})}{\mu(B(x_{0}, r))}\leq C\gamma\left(s,\frac{r}{2}\right)\left(\frac{\mu(T(k_{j},k_{j+1},\lambda r))}{\mu(B(x_{0}, r))}\right)^{1-\frac{s}{p}}.
\end{equation}
\underline{Case 2:} $\max\lbrace\left(\frac{M}{r2^{j}}\right)^{p}, \left(\frac{M}{r2^{j}}\right)^{q}\rbrace=\left(\frac{M}{r2^{j}}\right)^{q}$.\\
Analogously, making $l=q$ in (\ref{eq1prop4.4}) and (\ref{eq2prop4.4}) we get
\begin{equation}\label{eq4prop4.4}
\frac{\mu(S_{k_{j+1},r})}{\mu(B(x_{0}, r))}\leq C\gamma \left(s,\frac{r}{2}\right)\left(\frac{\mu(T(k_{j},k_{j+1},\lambda r))}{\mu(B(x_{0}, r))}\right)^{1-\frac{s}{q}}.
\end{equation}
If $n\geq j+1$, notice that both (\ref{eq3prop4.4}) and (\ref{eq4prop4.4}) hold true when the set $S_{k_{j+1},r}$ on the left part of the inequalities is replaced by $S_{k_n,r}$. Thus, for $l=p,q$,
\begin{equation*}
\left(\frac{\mu(S_{k_n,r})}{\mu(B(x_{0}, r))}\right)^{\frac{l}{l-s}}\leq C\gamma\left(s,\frac{r}{2}\right)^{\frac{l}{l-s}}\frac{\mu(T(k_{j},k_{j+1},\lambda r))}{\mu(B(x_{0}, r))},
\end{equation*}
and, summing up over $j=0,1,...,n-1$, 
\begin{equation*}
\left(\frac{\mu(S_{k_n,r})}{\mu(B(x_{0}, r))}\right)^{\frac{l}{l-s}}\leq \frac{C}{n} \gamma\left(s,\frac{r}{2}\right)^{\frac{l}{l-s}}.
\end{equation*}
As a consequence,  for $l=p,q$, we get
\begin{equation*}
\frac{\mu(S_{k_n,r})}{\mu(B(x_{0}, r))}\leq \frac{C}{n^{\frac{l}{l-s}}}\gamma\left(s,\frac{r}{2}\right).
\end{equation*}
Notice that independently of the case we have
\begin{equation}\label{eq5prop4.4}
\frac{\mu(S_{k_n,r})}{\mu(B(x_{0}, r))}\leq \frac{C}{n^{\frac{p}{p-s}}} \gamma\left(s,\frac{r}{2}\right).
\end{equation}
By Theorem \ref{Theorem 4.2} with $k_{n}$ and $r$ instead of $k_{0}$ and $R$, we obtain
\begin{align*}
\esssup_{B(x_{0},\frac{r}{2})} u&\leq k_{n}+C\left(\dashint_{B(x_{0}, r)}(u-k_{n})_{+}^{q}\, d\mu\right)^{\frac{1}{q}}\\
&\leq \esssup_{B(x_{0},\frac{r}{2})} w+M(1-2^{-n})+C\left(\dashint_{B(x_{0}, r)}(u-k_{n})_{+}^{q}\, d \mu\right)^{\frac{1}{q}}\\
&\leq \esssup_{B(x_{0},\frac{r}{2})} w+M(1-2^{-n})\\&\quad+C\left(\frac{1}{\mu(B(x_{0}, r))}\int_{S_{k_{n},r}}\big(\esssup_{B(x_{0},2\lambda r)}u-k_{n}\big)^{q}\, d\mu\right)^{\frac{1}{q}}\\
&\leq \esssup_{B(x_{0},\frac{r}{2})} w+M(1-2^{-n})+C\left(\frac{1}{\mu(B(x_{0}, r))}\int_{S_{k_{n},r}}(M^{q}2^{-nq})\, d\mu\right)^{\frac{1}{q}}\\
&=\esssup_{B(x_{0},\frac{r}{2})} w+M(1-2^{-n})+CM2^{-n}\left(\frac{\mu(S_{k_{n},r})}{\mu(B(x_{0}, r))}\right)^{\frac{1}{q}},
\end{align*} where, clearly, $C$ has the same dependencies of the constant of Theorem \ref{Theorem 4.2}, that is $C=C(\alpha, \beta, C_{PI}, Q)$.
By (\ref{eq5prop4.4}) we then have
\begin{equation*}
\esssup_{B(x_{0},\frac{r}{2})} u \leq\esssup_{B(x_{0},\frac{r}{2})} v+M(1-2^{-n})+\frac{CM}{2^{n}n^{(1-\frac{s}{p})\frac{1}{q}}}\gamma \left(s,\frac{r}{2}\right)^{\frac{1}{q}}.
\end{equation*}
If $n\geq C\gamma \left(s,\frac{r}{2}\right)^{\frac{p}{p-s}}$ then, the last term on the right-hand side of the last inequality is at most $2^{-n-1}M$. So,
\begin{equation*}
\esssup_{B(x_{0},\frac{r}{2})} u \leq \esssup_{B(x_{0},\frac{r}{2})} v+M(1-2^{-n})+2^{-n-1}M.
\end{equation*}
Finally, we get
\begin{equation*}
M\left(\frac{r}{2},r_{0}\right)\leq(1-2^{-n-1})M.
\end{equation*}
At last, by choosing the smallest integer $n(r)$ for which $n\geq n(r) \geq C\gamma \left(s,\frac{r}{2}\right)^{\frac{p}{p-s}}$, the proof is completed. 
\end{proof}

Now we are ready to prove the next theorem. This is a pointwise estimate for $(p,q)$-quasiminimizers near a boundary point, see for example \cite{B, Maz}.

\begin{theorem}\label{Theorem2.11}
Let $u\in N^{1,q}(X)$ be a $(p,q)$-quasiminimizer on $\Omega$ and $w\in N^{1,q}(X)$ with $u-w\in N^{1,q}_{0}(\Omega)$. Then there exist $C_{0},C_{1}>0$ such that
\begin{equation}
M(\rho,r_{0})\leq C_{1}M(r_{0},r_{0})\exp\left(-\frac{1}{4}\int_{\rho}^{r_{0}}\exp\left(-C_{0}\gamma(s,r)^{\frac{p}{p-s}}\right)\frac{\, d r}{r}\right)
\end{equation}
\end{theorem}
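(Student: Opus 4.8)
The plan is to iterate the one-step decay estimate from Proposition~\ref{prop4.4} across a dyadic sequence of radii and then convert the resulting product into the exponential integral appearing in the statement. First I would fix $\lambda\geq 1$ and $n(r)$ as in Proposition~\ref{prop4.4}, set $r_j = 2^{-j}(2\lambda)^{-j}r_0$ or, more cleanly, choose radii $r_j$ comparable to $r_0\sigma^{j}$ for a fixed ratio $\sigma\in(0,1)$ with $2\lambda r_{j+1}\leq r_j$, so that Proposition~\ref{prop4.4} applies with $r=2r_{j+1}$, $r_0$ fixed. This yields
\begin{equation*}
M(r_{j+1},r_0)\leq \bigl(1-2^{-n(2r_{j+1})-1}\bigr)M(r_j,r_0),
\end{equation*}
and iterating gives, for any $\rho$ with $r_{N+1}\leq\rho\leq r_N$,
\begin{equation*}
M(\rho,r_0)\leq M(r_N,r_0)\leq M(r_0,r_0)\prod_{j=0}^{N-1}\bigl(1-2^{-n(2r_{j+1})-1}\bigr).
\end{equation*}

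Next I would estimate the product. Using $1-t\leq e^{-t}$ for $t\in[0,1]$ with $t = 2^{-n(2r_{j+1})-1}$, the product is bounded by $\exp\bigl(-\tfrac12\sum_{j} 2^{-n(2r_{j+1})}\bigr)$. Recall $n(r)$ is the smallest integer with $n(r)\geq C\gamma(s,\tfrac r2)^{p/(p-s)}$, so $2^{-n(r)}\geq \tfrac12\,2^{-C\gamma(s,r/2)^{p/(p-s)}} = \tfrac12\exp\bigl(-C\log 2\,\gamma(s,r/2)^{p/(p-s)}\bigr)$; absorbing $\log 2$ into the constant, write $2^{-n(2r_{j+1})}\geq \tfrac12\exp(-C_0\gamma(s,r_{j+1})^{p/(p-s)})$ for a suitable $C_0$, up to adjusting which radius enters $\gamma$ by the doubling/monotonicity comparisons (the capacity and measure factors in $\gamma$ at comparable radii differ by a multiplicative constant, which can be absorbed since $\gamma$ enters through an exponential — this is the point requiring a little care). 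Thus
\begin{equation*}
M(\rho,r_0)\leq M(r_0,r_0)\exp\Bigl(-\tfrac14\sum_{j=0}^{N-1}\exp\bigl(-C_0\gamma(s,r_{j+1})^{p/(p-s)}\bigr)\Bigr).
\end{equation*}

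Finally I would pass from the sum to the integral. Since the radii $r_j$ are geometric, $\log(r_j/r_{j+1})$ is a fixed constant, so the sum $\sum_j \exp(-C_0\gamma(s,r_{j+1})^{p/(p-s)})$ is comparable (via a Riemann-sum argument, using that $r\mapsto\gamma(s,r)$ varies in a controlled way over each dyadic annulus, so $\exp(-C_0\gamma(s,r)^{p/(p-s)})$ is within a fixed factor of its value at the endpoints) to $\int_{\rho}^{r_0}\exp(-C_0\gamma(s,r)^{p/(p-s)})\,\frac{dr}{r}$, up to a multiplicative constant in the exponent and a harmless boundary term that contributes the prefactor $C_1$. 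Redefining $C_0$ and $C_1$ to absorb these constants yields exactly the claimed estimate. The main obstacle is the last comparison: justifying that replacing the discrete sum over dyadic radii by the integral — and replacing $\gamma$ at one endpoint of an annulus by $\gamma$ at any comparable radius — only costs universal constants, which hinges on the monotonicity/doubling behaviour of $\mu(B(x_0,r))$, of $r^{-s}$, and of the variational capacity ${\rm cap}_s(B(x_0,r)\setminus\Omega,B(x_0,2r))$ under bounded changes of radius; once that is in hand the rest is bookkeeping.
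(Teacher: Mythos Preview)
Your overall plan---iterate Proposition~\ref{prop4.4} along a geometric sequence of radii, convert the product to a sum via $1-t\leq e^{-t}$, then to an integral---matches the paper's approach through the first two steps. The difficulty is the last step, and the obstacle you flag is real but not surmountable in the way you suggest. You want to compare the discrete sum $\sum_j \omega(r_{j+1})$, with $\omega(r)=\exp(-C_0\gamma(s,r)^{p/(p-s)})$ and the $r_j$ chosen \emph{a priori} as a fixed geometric sequence, to the integral $\int_\rho^{r_0}\omega(r)\,\frac{dr}{r}$. For this you would need $\gamma(s,r)$ (hence $\omega(r)$, after adjusting $C_0$) to be comparable at comparable radii. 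But there is no such comparability: while $r^{-s}\mu(B(x_0,r))$ behaves well under doubling, the capacity ${\rm cap}_s(B(x_0,r)\setminus\Omega,B(x_0,2r))$ need not. The set $B(x_0,r)\setminus\Omega$ is monotone in $r$, so its capacity can drop to zero as $r$ shrinks through a dyadic interval (e.g.\ if $X\setminus\Omega$ meets $B(x_0,r)$ only in an outer annulus). Thus $\gamma(s,r')$ can be enormously larger than $\gamma(s,r)$ for $r'\sim r/2$, and no adjustment of $C_0$ repairs this.

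The paper avoids this issue by \emph{not} fixing the radii in advance. It splits $(0,r_0)$ into two alternating families of $(4\lambda)$-adic intervals, picks the family $I_m$ carrying at least half of $\int_\rho^{r_0}\omega(r)\,\frac{dr}{r}$, and then in each constituent interval selects $r_j$ by the integral mean value theorem so that $\omega(r_j)$ dominates the integral of $\omega(r)\,\frac{dr}{r}$ over that interval. This yields $\sum_j\omega(r_j)\geq \frac12\int_\rho^{r_0}\omega(r)\,\frac{dr}{r}-C$ \emph{without} any regularity assumption on $\gamma$. The alternation ensures consecutive chosen radii satisfy $4\lambda r_{j+1}\leq r_j$, which is exactly the spacing Proposition~\ref{prop4.4} requires. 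With this selection of radii, your iteration argument goes through verbatim.
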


\begin{proof}
	We denote $M(r)=M(r,r_{0})$, where $r_{0}$ is fixed. Without loss of generality, we can assume that $0< M(r_{0}) < +\infty$, otherwise the proof is finished. 
 Let $C$ and $n(r)$ be as in Proposition \ref{prop4.4}. We define $C_{0}=C\log 2$ and
\begin{equation*}
\omega(r)=\exp\left(-C_{0}\gamma(s,r)^{\frac{p}{p-s}}\right)=2^{-n(2r)}.
\end{equation*}
Now we consider $(0,r_{0})=I_{1}\cup I_{2}$ with $I_{1}\cap I_{2}=\emptyset$, where
\begin{equation*}
I_{m}=\bigcup_{j=1}^{+\infty}\left[(4\lambda)^{m-2j-1}r_{0}, (4\lambda)^{m-2j}r_{0}\right), \quad \mbox{for $m=1,2$.} 
\end{equation*}
Thus, 
\begin{equation}\label{eq1theo2.11}
\int_{\rho}^{r_{0}}\omega(r)\frac{dr}{r}\leq 2\int_{(\rho,r_{0})\cap I_{m}}\omega(r)\frac{dr}{r},
\end{equation} for $m=1$ or $m=2$. Now, for every $j \in \mathbb{N}$, let $r_{j}\in \left[(4\lambda)^{m-2j-1}r_{0}, (4\lambda)^{m-2j}r_{0}\right)$  be such that
\begin{equation*}
\omega(r_{j})\geq\frac{1}{(4\lambda)^{m-2j-1}r_{0}}\int_{(4\lambda)^{m-2j-1}r_{0}}^{(4\lambda)^{m-2j}r_{0}}\omega(r)dr\geq \int_{(4\lambda)^{m-2j-1}r_{0}}^{(4\lambda)^{m-2j}r_{0}}\omega(r)\frac{dr}{r}.
\end{equation*}
Since $\omega(r)\leq 1$ for all $r$, we then get
\begin{equation}\label{eq2theo2.11}
\int_{(\rho,r_{0})\cap I_{m}}\omega(r)\frac{dr}{r}\leq\sum_{\rho\leq r_{j}<r_{0}/4\lambda}\omega(r_{j})+C.
\end{equation}
Using Proposition \ref{prop4.4}, we deduce
\begin{align*}
M((4\lambda)^{m-2j-1}r_{0})&\leq M(r_{j})\leq M(4\lambda r_{j})(1-2^{-n(2r_{j})-1})\\
&\leq M(4\lambda)^{m-2j+1}r_{0}\left(1-\frac{\omega(r_{j})}{2}\right), \quad \mbox{for $j\in \mathbb{N}$.}
\end{align*}
Proceeding by iteration and recalling that $\log(1-t)\leq -t$, we get
\begin{equation*}
M(\rho)\leq M(r_{0})\exp\left(-\frac{1}{2}\sum_{\rho\leq r_{j}<r_{0}/4\lambda}\omega(r_{j})\right),  \quad \mbox{for $0<\rho<r_{0}$.}
\end{equation*}
Now, notice that we have
\begin{equation*}
-\frac{1}{2}\sum_{\rho\leq r_{j}<\frac{r_{0}}{4\lambda}}\omega(r_{j})\leq C-\frac{1}{2}\int_{(\rho,r_{0})\cap I_{m}}\omega(r)\frac{dr}{r}\leq C-\frac{1}{4}\int_{\rho}^{r_{0}}\omega(r)\frac{dr}{r}.
\end{equation*}
Then,
\begin{equation*}
\exp\left(-\frac{1}{2}\sum_{\rho\leq r_{j}<\frac{r_{0}}{4\lambda}}\omega(r_{j})\right)\leq C_{1}\exp\left(\frac{1}{4}\int_{\rho}^{r_{0}}\exp\left(-C_{0}\gamma(s,r)^{\frac{p}{p-s}}\right)\frac{dr}{r}\right)
\end{equation*}
as wanted.
\end{proof}
Theorem  \ref{Theorem2.11} implies the following result, which gives us a sufficient condition for the H\"older continuity of $(p,q)$-quasiminimizers at a boundary point.

\begin{theorem}\label{Theorem2.12}
Let $u\in N^{1,q}(X)$ be a $(p,q)$-quasiminimizer on $\Omega$ and $w\in N^{1,q}(X)$ be a H\"older continuous function at $x_{0}\in\partial\Omega$, with $u-w\in N^{1,q}_{0}(\Omega)$.  Then there exists $C_0>0$ such that
\begin{equation*}
\liminf_{\rho\rightarrow 0}\frac{1}{\vert \log \rho\vert}\int_{\rho}^{1}\exp\left(-C_0 \gamma(s,r)^{\frac{p}{p-s}}\right)\frac{dr}{r}>0.
\end{equation*}
 Thus $u$ is H\"older continuous at $x_{0}$.
\end{theorem}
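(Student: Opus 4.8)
The plan is to feed the pointwise estimate of Theorem~\ref{Theorem2.11} into the H\"older continuity of the boundary datum $w$, coupling the inner radius $\rho$ with the outer radius $r_0$ in $M(\cdot,\cdot)$ so that the resulting bound decays like a power of $\rho$. Abbreviating $\omega(r)=\exp(-C_0\gamma(s,r)^{p/(p-s)})$ as in the proof of Theorem~\ref{Theorem2.11}, the hypothesis
\[
L:=\liminf_{\rho\to0}\frac{1}{|\log\rho|}\int_\rho^1\omega(r)\,\frac{dr}{r}>0
\]
furnishes a radius $\rho_1>0$ with $\int_\rho^1\omega(r)\,\frac{dr}{r}\ge\frac{L}{2}\log(1/\rho)$ for $0<\rho<\rho_1$; since $0\le\omega\le1$, this gives for $0<\rho<r_0<\rho_1$ that $\int_\rho^{r_0}\omega(r)\,\frac{dr}{r}\ge\frac{L}{2}\log(1/\rho)-\log(1/r_0)$. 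Fixing a parameter $\theta\in(0,L/2)$ and specializing to $r_0=\rho^{\theta}$ (for $\rho$ small one has $\rho<\rho^\theta<\rho_1$ and all radius restrictions in Proposition~\ref{prop4.4} and Theorem~\ref{Theorem2.11} are met), the integral is bounded below by $c\log(1/\rho)$ with $c:=\frac{L}{2}-\theta>0$.

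Applying Theorem~\ref{Theorem2.11} with $r_0=\rho^\theta$ then yields $M(\rho,\rho^\theta)\le C_1\,M(\rho^\theta,\rho^\theta)\,\rho^{c/4}$ for $\rho$ small. To control the factor $M(\rho^\theta,\rho^\theta)$ I would fix once and for all a small radius $R_0$ for which $\Psi_0:=\big(\esssup_{B(x_0,R_0)}u-w(x_0)\big)_+$ is finite; here one uses that $u$ is essentially bounded near $x_0$, which should follow from the boundedness results of Section~\ref{Sec3} together with the boundedness of $w$ near $x_0$. Since $r\mapsto\esssup_{B(x_0,r)}u$ is nondecreasing and $\esssup_{B(x_0,r)}w\ge w(x_0)$ by continuity of $w$ at $x_0$, one gets $M(\rho^\theta,\rho^\theta)\le\Psi_0$ whenever $\rho^\theta\le R_0$, hence $M(\rho,\rho^\theta)\le C_1\Psi_0\,\rho^{c/4}$ for all small $\rho$.

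Invoking now the H\"older continuity of $w$ at $x_0$, say $\esssup_{B(x_0,r)}w\le w(x_0)+C_w r^{\kappa}$ for small $r$ with $C_w,\kappa>0$, I would estimate
\[
\esssup_{B(x_0,\rho)}u-w(x_0)\ \le\ M(\rho,\rho^\theta)+\big(\esssup_{B(x_0,\rho^\theta)}w-w(x_0)\big)\ \le\ C_1\Psi_0\,\rho^{c/4}+C_w\,\rho^{\theta\kappa}\ \le\ C\,\rho^{\delta},
\]
where $\delta:=\min\{c/4,\theta\kappa\}>0$. Running the same chain of estimates for the $(p,q)$-quasiminimizer $-u$ with boundary datum $-w$ (which is still H\"older continuous at $x_0$, with $-(u-w)\in N^{1,q}_0(\Omega)$; recall Remark~\ref{Proposition 3.3KS}) gives $w(x_0)-\essinf_{B(x_0,\rho)}u\le C\,\rho^{\delta}$. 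Combining the two bounds yields $\osc(u,B(x_0,\rho))\le C\,\rho^{\delta}$ and $|u(x)-w(x_0)|\le C\,d(x,x_0)^{\delta}$ for $\mu$-a.e.\ $x$ near $x_0$; passing to the continuous representative of $u$ in $\Omega$ provided by Theorem~\ref{Theorem 5.2} and using $u(x_0)=w(x_0)$, this is precisely H\"older continuity of $u$ at $x_0$.

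The main difficulty is not conceptual but is the uniformity bookkeeping: one must verify that, with the coupling $r_0=\rho^{\theta}$, every radius restriction in Proposition~\ref{prop4.4} and Theorem~\ref{Theorem2.11} holds for all small $\rho$, and that every constant produced depends only on the structural data, on $K$, and on the fixed reference radius $R_0$, not on $\rho$. A secondary point to settle carefully is the essential boundedness of $u$ near $x_0$ needed for $\Psi_0$ to be finite, where the boundedness of $w$ near $x_0$ and the De Giorgi-type estimates of Section~\ref{Sec3} come in.
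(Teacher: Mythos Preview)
Your proposal is correct and follows essentially the same route as the paper's proof: both apply Theorem~\ref{Theorem2.11}, use $\omega\le 1$ to bound $\int_{r_0}^1\omega(r)\,\frac{dr}{r}\le|\log r_0|$, invoke the H\"older continuity of $w$ for the boundary term, and then couple $r_0=\rho^{\theta}$ (the paper writes $r_0=\rho^{k'}$ with $0<k'<k$) to obtain a power-type decay; the reduction to the one-sided estimate via $-u$ and the appeal to Theorem~\ref{Theorem 4.2} for the finiteness of $\Psi_0$ (your secondary point) are likewise exactly what the paper does.
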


\begin{proof}
By Remark \ref{Proposition 3.3KS}, $-u$ is a $(p,q)$-quasiminimizer. We notice that $-u-(-w) \in N^{1,q}_{0}(\Omega)$ by assumption, so it is enough to estimate $(u(x)-w(x_{0}))_{+}$. Without loss of generality, we assume that $w(x_{0})=0$. By the continuity of $w$ at $x_{0}$, Theorem \ref{Theorem 4.2} implies
\begin{equation}\label{eq1theo2.12}
M(r_{0},r_{0})\leq M=\esssup_{B(x_{0},R)}u_{+}<+\infty,
\end{equation} for some $R>0$ and all $0<r_{0}<R$.

Using Theorem \ref{Theorem2.11} we get, for $0<\rho<r_{0}$, 
\begin{align}\label{eq2theo2.12}
 \nonumber \esssup_{B(x_{0},\rho)}u_{+}&\leq \esssup_{B(x_{0},r_{0})}w_{+}+M(\rho,r_{0})\\
&\leq \esssup_{B(x_{0},r_{0})}w_{+}+C_{1}M\exp\left(-\frac{1}{4}\int_{\rho}^{r_{0}}\exp\left(-C_{0}\gamma(s,r)^{\frac{p}{p-s}}\right)\frac{dr}{r}\right).
\end{align} 
The hypotheses imply that there are $C, h,k>0$ satisfying 
\begin{equation*}
\esssup_{B(x_{0},r_{0})}w_{+}\leq Cr_{0}^{h}
\end{equation*}
and
\begin{equation*}
\int_{\rho}^{1}\exp\left(-C_{0}\gamma(s,r)^{\frac{p}{p-s}}\right)\frac{dr}{r}\geq k\vert\log\rho\vert,
\end{equation*}
for all sufficiently small $\rho$ and $r_{0}$.

Notice that
\begin{equation*}
\int_{x_{0}}^{1}\exp\left(-C_{0}\gamma(s,r)^{\frac{p}{p-s}}\right)\frac{dr}{r}\leq \int_{x_{0}}^{1}\frac{dr}{r}=\vert\log r_{0}\vert, 
\end{equation*}for all $0<r_{0}<1$. Thus, applying inequality (\ref{eq2theo2.12}), we get 
\begin{equation*}
\esssup_{B(x_{0},\rho)}u_{+}\leq Cr_{0}^{h}+C_{1}M\rho^{\frac{k}{4}}r_{0}^{-\frac{1}{4}},
\end{equation*}
for sufficiently small $\rho$ and $r_{0}$.

Lastly, we choose $r_{0}=\rho^{k '}$ with $0<k '<k$ to get that $u$ is H\"older continuous at $x_{0}$ up to redefining it on a measure zero set.
\end{proof}

In the following theorem, supposing that $w \in N^{1,q}(X)$ is continuous at $x_0\in \partial \Omega$, we show that $u$ is also continuous at $x_0$ under a Wiener type regularity condition.
	
\begin{theorem}
	
	Let $u \in N^{1,q}(X)$ be a $(p,q)$-quasiminimizer on $\Omega$ and $w\in N^{1,q}(X)$ be a continuous function at $x_{0}\in\partial\Omega$, with $u-w\in N^{1,q}_{0}(\Omega)$.
	 Then there exists $t>0$ such that 
$$\int_0^1 \left( \dfrac{{\rm cap}_s(B(x_0,r)\setminus \Omega, B(x_0, 2r))}{r^{-\frac{qs}{p}}\mu(B(x_0,r))}\right)^{t} \dfrac{dr}{r}=+\infty,$$ and therefore, $u$ is continuous at $x_0$.
\end{theorem}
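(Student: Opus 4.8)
The plan is as follows. I read the statement as a sufficient condition: assuming that $\int_0^1\kappa(r)^t\,\frac{dr}{r}=+\infty$ for some $t>0$, where
\[
\kappa(r)=\frac{{\rm cap}_s(B(x_0,r)\setminus\Omega,B(x_0,2r))}{r^{-qs/p}\,\mu(B(x_0,r))},
\]
I want to conclude that $u$ is continuous at $x_0$. The outer skeleton is exactly the one of the proof of Theorem \ref{Theorem2.12}: by Theorem \ref{Theorem2.11}, for all small $r_0>0$,
\[
M(\rho,r_0)\le C_1 M(r_0,r_0)\exp\Bigl(-\tfrac14\int_\rho^{r_0}\exp\bigl(-C_0\gamma(s,r)^{p/(p-s)}\bigr)\tfrac{dr}{r}\Bigr),
\]
and $M(r_0,r_0)<+\infty$ for small $r_0$ by the weak Harnack estimate of Theorem \ref{Theorem 4.2} together with the continuity of $w$ at $x_0$, exactly as in the proof of Theorem \ref{Theorem2.12}. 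Thus the whole theorem reduces to showing the single analytic fact
\[
\int_0^{r_0}\exp\bigl(-C_0\gamma(s,r)^{p/(p-s)}\bigr)\,\frac{dr}{r}=+\infty.
\]

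The first step would be to record the algebraic link between $\gamma(s,r)$ and $\kappa(r)$. Since $\gamma(s,r)=r^{-s}\mu(B(x_0,r))/{\rm cap}_s(B(x_0,r)\setminus\Omega,B(x_0,2r))$, one gets directly
\[
\gamma(s,r)^{p/(p-s)}=\frac{r^{\,s(q-p)/(p-s)}}{\kappa(r)^{\,p/(p-s)}}.
\]
The point is that normalizing by $r^{-qs/p}$ instead of $r^{-s}$ in $\kappa$ produces the strictly positive power $r^{s(q-p)/(p-s)}$, which is available precisely because $q>p$; this positive power is the whole engine of the argument. I would also recall the elementary bound ${\rm cap}_s(B(x_0,r)\setminus\Omega,B(x_0,2r))\le C r^{-s}\mu(B(x_0,r))$, obtained by testing the variational $s$-capacity with a $\tfrac1r$-Lipschitz cutoff equal to $1$ on $B(x_0,r)$ and vanishing outside $B(x_0,2r)$ and using doubling. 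This gives $\kappa(r)\le C r^{s(q-p)/p}$ for $0<r<1$; in particular $\kappa$ is bounded on $(0,1)$ and $\int_{r_0}^1\kappa(r)^t\frac{dr}{r}<+\infty$, so the hypothesis already implies $\int_0^{r_0}\kappa(r)^t\frac{dr}{r}=+\infty$ for every $r_0\in(0,1)$.

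The core is then a splitting of $(0,r_0)$ according to the size of $\gamma(s,r)^{p/(p-s)}$: set $G=\{r\in(0,r_0):\gamma(s,r)^{p/(p-s)}\le 1\}$ and $B=(0,r_0)\setminus G$. On $B$ one has $\kappa(r)^{p/(p-s)}<r^{s(q-p)/(p-s)}$, i.e.\ $\kappa(r)<r^{s(q-p)/p}$, hence $\int_B\kappa(r)^t\frac{dr}{r}\le\int_0^{r_0}r^{ts(q-p)/p}\frac{dr}{r}<+\infty$. Since $\int_0^{r_0}\kappa(r)^t\frac{dr}{r}=+\infty$, this forces $\int_G\kappa(r)^t\frac{dr}{r}=+\infty$, and because $\kappa$ is bounded on $(0,1)$ this in turn forces $\int_G\frac{dr}{r}=+\infty$. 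Finally, on $G$ the integrand $\exp(-C_0\gamma(s,r)^{p/(p-s)})$ is $\ge e^{-C_0}$, so $\int_0^{r_0}\exp(-C_0\gamma(s,r)^{p/(p-s)})\frac{dr}{r}\ge e^{-C_0}\int_G\frac{dr}{r}=+\infty$, which is the required fact.

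To close, I would apply the second displayed decay estimate both to $u$ (with data $w$) and to $-u$ (with data $-w$); this is legitimate since $-u$ is a $(p,q)$-quasiminimizer by Remark \ref{Proposition 3.3KS} and $-w$ is continuous at $x_0$. Letting $\rho\to0^+$ gives $M(\rho,r_0)\to0$ and its analogue for $-u$; combining with $\esssup_{B(x_0,\rho)}w\to w(x_0)$ and $\essinf_{B(x_0,\rho)}w\to w(x_0)$ yields $\lim_{\rho\to0}\esssup_{B(x_0,\rho)}u=\lim_{\rho\to0}\essinf_{B(x_0,\rho)}u=w(x_0)$, so $\osc(u,B(x_0,\rho)\cap\Omega)\to0$; together with the interior local H\"older continuity of $u$ (Theorem \ref{Theorem 5.2}), $u$ (modified on a set of capacity zero if necessary) is continuous at $x_0$. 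I expect the only genuinely delicate point to be the splitting argument of the third paragraph, and within it the observation that the $r^{-qs/p}$ normalization is exactly what makes $\kappa$ polynomially small on the ``bad'' set $B$ and hence $\kappa^t$ integrable there against $\frac{dr}{r}$; the reduction to Theorem \ref{Theorem2.11} and the final passage to continuity are routine and parallel the proof of Theorem \ref{Theorem2.12}.
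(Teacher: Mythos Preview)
Your proof is correct and takes a genuinely different route from the paper. The paper argues by contraposition: assuming $u$ is discontinuous at $x_0$, it produces levels $k_0,k(r)$ and, using Proposition~\ref{Proposition3.2}, the De Giorgi inequality, and the infimum-side weak Harnack inequality (Theorem~\ref{Theorem 7.1KS}), shows that for a specific $t$ (depending on the exponent $\sigma$ of Theorem~\ref{Theorem 7.1KS}) one has $\kappa(r)^t\le C\,h(r)$ with $h(r)=\esssup_{B(x_0,10\lambda r)}u-\esssup_{B(x_0,r)}u$, and then concludes by the telescoping bound $\int_0^1 h(r)\,\tfrac{dr}{r}<\infty$.

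Your argument bypasses all of this. You work directly from Theorem~\ref{Theorem2.11} and reduce the question to the purely analytic claim $\int_0^{r_0}\exp\bigl(-C_0\gamma(s,r)^{p/(p-s)}\bigr)\tfrac{dr}{r}=\infty$, which you establish by the splitting $G\cup B$ and the key observation that the $r^{-qs/p}$ normalization makes $\kappa(r)\le r^{s(q-p)/p}$ on the ``bad'' set $B$ (precisely because $q>p$), so $\int_B\kappa^t\tfrac{dr}{r}<\infty$; divergence is then forced onto $G$, where the exponential is bounded below. This is more elementary: it avoids Theorem~\ref{Theorem 7.1KS} entirely and yields the conclusion for \emph{every} $t>0$, whereas the paper's contrapositive argument produces one particular $t$ tied to $\sigma$. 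The paper's approach, on the other hand, gives finer pointwise information (the bound $\kappa(r)^t\le C\,h(r)$), which could be of independent interest but is not needed for the continuity statement.
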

	
\begin{proof}
By absurd, we suppose that $u$ is not continuous at $x_0$. Eventually considering  $-u$ instead of $u$, we can assume that $$\lim_{r \to 0} \esssup_{B(x_0,r)}u>w(x_0)=0.$$
Thus, there exist $k_0, M>0$ so that 
\begin{equation}\label{17B}
2k_0< \esssup_{B(x_0,r)}u<M, 
\end{equation}for all sufficiently small $r > 0$.

Let $\sigma$ be defined as in Theorem \ref{Theorem 7.1KS}, $0<\sigma'< \sigma$ and $0<\theta<1$. We define
\begin{equation*}
h(r)= \esssup_{B(x_0, 10 \lambda r)} u- \esssup_{B(x_0,r)} u \quad \mbox{and} \quad k(r)=\esssup_{B(x_0, 10 \lambda r)}u- h(r)^{\frac{\sigma'}{\sigma}}.
\end{equation*}
Since $\lim_{r \to 0} h(r)=0$, we get that $k(r)>k_0$ and 
\begin{equation}\label{18B}
h(r)^{\frac{\sigma'}{\sigma}}= \esssup_{B(x_0,10 \lambda r)} u-k(r) \leq h(r)^{\frac{\theta\sigma'}{\sigma}}, 
\end{equation}for all sufficiently small $r > 0$.

The continuity hypothesis of $w$ at $x_0$ implies that $\esssup_{B(x_0,r_0)} w<k_0$ for some $r_0>0$, thus $(u-k_0)_+=0$ a.e. on $B(x_0,r_0) \setminus \Omega$. Using Proposition \ref{Proposition3.2}, for sufficiently small $r > 0$, we get that
\begin{align}\label{19B}
	{\rm cap}_s(B(x_0,r)\setminus \Omega, B(x_0, 2r))  &\dashint_{B(x_0, 2r)} (u-k_0)^s_+ \, d\mu \leq C \int_{B(x_0, 2\lambda r)} g^s_{(u-k_0)_+} \, d\mu \nonumber \\ &\leq C \int_{S_{k(r), 2\lambda r}} g^s_u \, d\mu + C \int_{T(k_0, k(r), 2\lambda r)} g^s_u \, d\mu , 
\end{align}
where $S_{k,r}=\{x\in B(x_0, r): u(x)>k\}$ and $T(h,k,r)=S_{h,r} \setminus S_{k,r}$.\\
Now, we consider each one of the two integrals on the right-hand side in \eqref{19B} in order to estimate them.
From the H\"{o}lder inequality, Lemma \ref{lem 4.1} and \eqref{18B}, we get
\begin{align}\label{20B}
	\int_{S_{k(r), 2\lambda r}} g^s_u \, d\mu & \leq \left( \int_{S_{k(r), 2 \lambda r}} g^p_u \, d\mu\right)^{\frac{s}{p}} \mu\left(B(x_0, 2 \lambda r)\right)^{1-\frac{s}{p}}\nonumber \\
	& 
	 \leq \left( C\int_{S_{k(r), 2\lambda r}} (ag^p_u + bg^q_u) \, d\mu \right)^{\frac{s}{p}} \mu\left(B(x_0, 2 \lambda r)\right)^{1-\frac{s}{p}}\nonumber \\
		&\leq \Big(\frac{ C }{(8\lambda r)^{p}} \int_{S_{k(r), 10\lambda r}} a(u-k(r)) ^p d \mu	\nonumber\\ &\quad+\frac{C}{(8\lambda r)^{q}} \int_{S_{k(r), 10\lambda r}}b(u-k(r))^q d \mu\Big)^{\frac{s}{q}}\mu\left(B(x_0, 2 \lambda r)\right)^{1-\frac{s}{p}} \nonumber \\		
	&=\Big(\frac{ C}{(8\lambda r)^{p}} \int_{S_{k(r), 10\lambda r}} (u-k(r)) ^p d \mu	\nonumber\\ &\quad+\frac{C}{(8\lambda r)^{q}} \int_{S_{k(r), 10\lambda r}}(u-k(r))^q d \mu\Big)^{\frac{s}{p}}\mu\left(B(x_0, 2 \lambda r)\right)^{1-\frac{s}{p}} \nonumber \\	
		&\leq \left(\frac{ C}{r^{q}} \int_{S_{k(r), 10\lambda r}} (u-k(r)) ^p d \mu\right)^{\frac{s}{p}}\mu\left(B(x_0, 2 \lambda r)\right)^{1-\frac{s}{q}}\nonumber \\ 
			&\leq  \frac{ C \mu(B(x_0,r))}{r^{\frac{qs}{p}}}\left(\dashint_{B(x_0,10 \lambda r)} (u-k(r)) ^q_+ d \mu \right)^{\frac{s}{q}}\nonumber \\
		&\leq \frac{C\mu(B(x_0,r))}{ r^{\frac{qs}{p}}}h(r)^{\frac{s\theta \sigma'}{\sigma}},
\end{align}
where $C=C(\alpha, \beta)$.

Applying again H\"{o}lder inequality, we deduce
\begin{align}\label{21B}
\int_{T(k_0, k(r), 2\lambda r)} g^s_u \, d\mu \leq \left( \int_{S_{k_0, 2 \lambda r}} g^p_u \, d\mu\right)^{\frac{s}{p}} \mu\left(T(k_0, k(r), 2\lambda r)\right)^{1-\frac{s}{p}}.
\end{align}
Using Lemma \ref{lem 4.1} and \eqref{17B}, we can estimate the integral term on the right-hand side of inequality  \eqref{21B} as follows 
\begin{align}\label{22B}
	&\left(\int_{S_{k_0, 2\lambda r}} g^p_u \, d\mu\right)^{\frac{s}{p}} \leq \left(\int_{S_{k_0, 2\lambda r}} (ag^p_u+bg^q_u)\, d\mu\right)^{\frac{s}{p}}\nonumber \\
&\leq\left( \frac{C}{( 2\lambda r)^p} \int_{B(x_0,10 \lambda r)} a(u-k_0) ^p_+ d \mu+\frac{C}{(2\lambda r)^q}  \int_{B(x_0,10 \lambda r)}b(u-k_0)^q_+ d \mu\right)^{\frac{s}{p}}\nonumber \\	
	&\leq \frac{C}{ r^{\frac{qs}{p}}} \left( \int_{B(x_0,10 \lambda r)} a(u-k_0) ^p_+ d \mu\right)^{\frac{s}{p}}  +\frac{C}{ r^{\frac{qs}{p}}} \left( \int_{B(x_0,10 \lambda r)}b(u-k_0)^q_+ d \mu\right)^{\frac{s}{p}}\nonumber \\
		&\leq \frac{C}{ r^{\frac{qs}{p}}} \left( \int_{B(x_0,10 \lambda r)}(u-k_0) ^p_+ d \mu\right)^{\frac{s}{p}} +\frac{C}{ r^{\frac{qs}{p}}} \left( \int_{B(x_0,10 \lambda r)}(u-k_0)^q_+ d \mu\right)^{\frac{s}{p}}\nonumber \\
		&\leq \frac{C M^s \mu(B(x_0,r))^{\frac{s}{p}}}{ r^{\frac{qs}{p}}}.
\end{align}
Note that throughout the last inequalites, without of loss of generality, we have assumed $M<1$ and that we can choose $r$ sufficiently small. 

On the other hand, to estimate $ \mu\left(T(k_0, k(r), 2\lambda r)\right)$, we notice that $$v= \esssup_{B(x_0,10 \lambda r)} u - \max \{u, k_0\}$$ is a nonnegative function in $B(x_0,10 \lambda r)$.
Therefore, $(-v-k)_+ = (u-k')_+ $ with $k' =k+  \esssup_{B(x_0,10 \lambda r)} u$, for all $k \geq k^*=k_0-\esssup_{B(x_0,10 \lambda r)} u $. Now, for $k<k^*$ we get $-v-k>-v-k^* \geq 0$ on $B(x_0, 10 \lambda r)$.
 So, from Lemma \ref{lem 4.1} with $k'$ instead of $k$, we have that $-v$ satisfies \eqref{5.4} with $S_{k, \rho}=S_{k_0,10 \lambda r}$.
By Theorem \ref{Theorem 7.1KS}, we obtain that 
\begin{align}\label{This}
\nonumber  \mu(T(k_0, k(r), 2\lambda r))& \big(\esssup_{B(x_0,10 \lambda r)} u -k(r) \big)^{\sigma}\leq \int_{B(x_0,2\lambda r)} v^{\sigma} \, d\mu \nonumber\\&\leq C\mu(B(x_0,2\lambda r))\big(\esssup_{B(x_0,10 \lambda r)} u-\esssup_{B(x_0,2 \lambda r)} u \big)^{\sigma}.
\end{align}
Inequalities \eqref{This} and \eqref{18B} imply
\begin{equation}\label{the latter}
\mu(T(k_0, k(r), 2\lambda r)) \leq C \mu(B(x_0,r))\omega(r)^{\sigma-\sigma'}.
\end{equation}
Now, considering \eqref{the latter}, \eqref{20B}-\eqref{22B}, we return to inequality \eqref{19B} and get 
\begin{align}\label{23B}
	\dfrac{{\rm cap}_s(B(x_0,r)\setminus \Omega, B(x_0, 2r))}{r^{-\frac{qs}{p}}\mu(B(x_0,r))} &\left(\dashint_{B(x_0, 2r)} (u-k_0)^s_+ \, d\mu\right) \nonumber \\ & \leq C \left(\omega(r)^{\frac{s\theta \sigma'}{\sigma}}+ M^s \omega(r)^{(\sigma-\sigma')\left(1-\frac{s}{q}\right)}\right).
\end{align}
Furthermore, applying Theorem \ref{Theorem 4.2} and inequality \eqref{17B} we obtain, for sufficiently small $r > 0$,
\begin{align*}
k_0^q &\leq \big(\esssup_{B(x_0,r)} u-k_0\big)^q\leq C \dashint_{B(x_0, 2r)} (u-k_0)^q_+ \, d\mu \\ &\leq CM^{q-s}\dashint_{B(x_0, 2r)} (u-k_0)^s_+ \, d\mu.
\end{align*}
By choosing opportunely $\theta$ and $\sigma$ of \eqref{23B}, we deduce that, for sufficiently small $r > 0$ and all $t> \frac{1}{s}+\frac{q}{\sigma(q-s)}$,
$$\left(\dfrac{{\rm cap}_s(B(x_0,r)\setminus \Omega, B(x_0, 2r))}{r^{-\frac{qs}{p}}\mu(B(x_0,r))}\right)^{t} \leq C \omega(r).$$
Finally, we get the following inequality$$\int_0^1 \omega (r)\dfrac{dr}{r}  \leq \int_1^{10\lambda}\esssup_{B(x_0,r)} u \, \dfrac{dr}{r}<+\infty,$$ which ends the proof.
\end{proof}	
	
\section{A boundary regularity result for $(p,q)$-minimizers}\label{Sec7}
In a similar way to Section \ref{Sec6}, here we have the following assumptions. Let $\Omega \subset X$ be a bounded domain and let $w \in N^{1,q}(X)$. Let $u \in N^{1,q}(\Omega)$ be a $(p,q)$-minimizer on $\Omega$ such that $w - u \in N^{1,q}_0(\Omega) $. 
The function $u$ is extended to all of $X$ by setting $u = w$ in $X \setminus \Omega$.

In this section, we focus our attention to $(p,q)$-minimizers and we prove Theorem \ref{Theorem 5.1BMS} which gives us control over the oscillation of $(p,q)$-minimizer functions at boundary points. This result is motivated by Theorem 5.1 of \cite{BMS} for $p$-minimizers in the setting of metric measure spaces. A key role in the proof is played by the comparison principle, which unfortunately fails for quasiminimizers (for further informations see, for example, \cite{BJ}). However we were able to prove a comparison principle for $(p,q)$-minimizers, thus to show the following theorem, that is the main result of this section.
\begin{theorem}\label{Theorem 5.1BMS}
	Let $\Omega \subset X$ be a bounded domain and $w \in N^{1,q}(X) \cap C(\overline{\Omega})$.
	Consider a $(p,q)$-minimizer $u$ on $\Omega$ such that $w - u \in N^{1,q}_0(\Omega) $. If $x_0 \in \partial\Omega$ and $0 < \rho \leq r$, then 
	\begin{align}\label{Th5.1(1)BMS}
		{\rm osc}(u, \overline{\Omega}\cap B(x_0, \rho)) \leq &	{\rm osc}(w, \partial \Omega \cap B(x_0, 5r)) \nonumber \\&+{\rm osc}(w, \partial \Omega ) \, {\rm exp}\left(-C\int_\rho^r \varphi(x_0, X \setminus \Omega, t)^{\frac{1}{q-1}} \dfrac{dt}{t}\right),
	\end{align}
	for some constant $C$ and 
		$\varphi(x, E, r)=\dfrac{{\rm cap}_q(B(x,r)\cap E, B(x,2r))}{	{\rm cap}_q(B(x,r), B(x,2r))}.$
\end{theorem}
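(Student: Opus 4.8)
The plan is to carry over to the $(p,q)$-energy the scheme of \cite{BMS} for $p$-minimizers, the decisive new ingredient being the comparison principle for $(p,q)$-minimizers established above (this is exactly the property that fails for quasiminimizers, which is why the statement is restricted to minimizers), with Theorem~\ref{uniqueness} on the coincidence of the minimal upper gradients underpinning that comparison principle. Write $B_t=B(x_0,t)$, $M(t)=\esssup_{\overline\Omega\cap B_t}u$, $m(t)=\essinf_{\overline\Omega\cap B_t}u$, $\omega(t)=M(t)-m(t)$. First I would record the weak maximum principle: comparing $u$ with the constants $\sup_{\partial\Omega}w$ and $\inf_{\partial\Omega}w$, which are trivial $(p,q)$-minimizers, via the comparison principle gives $\inf_{\partial\Omega}w\le u\le\sup_{\partial\Omega}w$ a.e.\ in $\Omega$, so that $\omega(t)\le{\rm osc}(w,\partial\Omega)$ for every $t$. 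Then I would freeze the levels $L=\sup_{\partial\Omega\cap B_{5r}}w$ and $\ell=\inf_{\partial\Omega\cap B_{5r}}w$ and introduce the excesses $A(t)=(M(t)-L)_+$ and $A'(t)=(\ell-m(t))_+$, noting that $u=w\le L$ on $\partial\Omega\cap B_t$ for all $t\le r$.

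The heart of the proof is a single-scale geometric decay of the excesses: there are $\tau\in(0,1)$ and $C_*>0$, depending only on $\alpha,\beta,p,q,C_d,C_{PI}$ and on ${\rm osc}(w,\partial\Omega)$, such that for all $t$ with $5t$ in the admissible range
\[
A(\tau t)\le\Big(1-C_*\,\varphi(x_0,X\setminus\Omega,t)^{\frac{1}{q-1}}\Big)A(t),
\]
and likewise for $A'$ (applied to $-u$, which is again a $(p,q)$-minimizer). To prove this, assuming $A(t)>0$ (otherwise it is trivial), I would compare $u$ with the $(p,q)$-minimizer $h$ on $\Omega\cap B_{2t}$ that agrees with $u$ on $\Omega\cap\partial B_{2t}$ and with $w$ on $\partial\Omega\cap B_{2t}$: then $(u-h)_+\in N^{1,q}_0(\Omega\cap B_{2t})$ by the weak maximum principle, so $u\le h$ there by the comparison principle, and $h$ is a genuine $(p,q)$-minimizer, so no rescaling of it is needed. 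It then remains to estimate $h$ near $x_0$: since $h=w\le L$ on $B_t\setminus\Omega$, a set whose variational $q$-capacity in $B_{2t}$ is $\asymp\varphi(x_0,X\setminus\Omega,t)\,{\rm cap}_q(B_t,B_{2t})$, one applies the Maz'ya-type inequality of Proposition~\ref{Proposition3.2} to a suitable truncation of $(h-k)_+$ with $k=L+\lambda A(t)$, bounds its $q$-energy by the De Giorgi inequality of Lemma~\ref{lem 4.1}, and invokes the boundedness results of Section~\ref{Sec3}; optimising in $\lambda$ produces the factor $1-C_*\varphi^{1/(q-1)}$. (Equivalently, one may test the minimality of $h$ directly against a competitor built from the $q$-capacitary potential of $\overline{B_t}\setminus\Omega$ in $B_{2t}$.)

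Finally I would iterate the decay along the scales $r_j=\tau^j r$, $j=0,\dots,n$ with $r_n\approx\rho$: since the estimate is purely multiplicative,
\[
A(\rho)\le A(r_n)\le\Big(\prod_{j=0}^{n-1}\kappa_j\Big)A(r)\le\exp\Big(-C_*\sum_{j=0}^{n-1}\varphi(x_0,X\setminus\Omega,r_j)^{\frac{1}{q-1}}\Big){\rm osc}(w,\partial\Omega),
\]
with $\kappa_j=1-C_*\varphi(x_0,X\setminus\Omega,r_j)^{1/(q-1)}$, using $1-x\le e^{-x}$ and $A(r)\le{\rm osc}(w,\partial\Omega)$; the sum is comparable to $\int_\rho^r\varphi(x_0,X\setminus\Omega,t)^{1/(q-1)}\,\tfrac{dt}{t}$ by the interval-splitting/averaging device already used in the proof of Theorem~\ref{Theorem2.11}. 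The same bound holds for $A'$, and since $\omega(\rho)\le(L-\ell)+A(\rho)+A'(\rho)={\rm osc}(w,\partial\Omega\cap B_{5r})+A(\rho)+A'(\rho)$, this yields \eqref{Th5.1(1)BMS}, the harmless factor $2$ being absorbed into $C$.

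The main obstacle is the single-scale estimate. Unlike the $p$-Laplacian case of \cite{BMS}, the $(p,q)$-energy is not scale invariant, so the two exponents enter asymmetrically: the capacitary competitor has large gradients at small scales, so its energy is governed by the $q$-term — this is what produces the exponent $\tfrac{1}{q-1}$ in $\varphi$ — whereas in the De Giorgi/Caccioppoli bound for $(h-k)_+$ the right-hand side is dominated by the $p$-term because $(h-k)_+$ is small, generating a spurious factor of the shape $t^{q-p}$. Reconciling these two regimes so that the decay factor depends only on $\varphi$ and the structural data — at the cost of letting $C$ depend also on ${\rm osc}(w,\partial\Omega)$ — while keeping the comparison principle applicable throughout (it holds only for minimizers, and only between functions with the same boundary values, which is why the competitor is chosen to match $u$ on $\Omega\cap\partial B_{2t}$), is where the real work lies.
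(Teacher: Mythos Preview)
Your overall structure --- comparison principle plus a single-scale decay iterated over geometric scales --- is correct in spirit, but the route differs from the paper's and the proposal has a genuine gap at the single-scale step.

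The paper does not argue via the Maz'ya inequality and De~Giorgi estimates as you propose. Instead, following \cite{BMS} and \cite{HKM}, it introduces the $(p,q)$-\emph{potential} as a barrier: Lemma~\ref{Lemma5.7BMS} shows that if $v$ is the $(p,q)$-potential for $\overline{B(x_0,r)}\setminus\Omega$ relative to $B(x_0,5r)$ (Definition~\ref{(p,q)-potential}), then
\[
1-v(x)\le\exp\Big(-C\int_\rho^r\varphi(x_0,X\setminus\Omega,t)^{\frac{1}{q-1}}\,\frac{dt}{t}\Big)\qquad\text{for }x\in B(x_0,\rho).
\]
Its proof iterates potentials $u_j$ at the nested scales $B_j=5^{-j}B$, and the single-scale drop $u_j\ge C\varphi_j^{1/(q-1)}$ on $\overline B_j$ comes from the \emph{Harnack inequality} of Section~\ref{Sec5} together with the capacitary lower bound of \cite[Lemma~5.6]{BMS}, not from any energy or Caccioppoli computation. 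The comparison principle (Lemma~\ref{Theorem 6.4Sh}) is then applied between successive $v_j$'s to propagate the decay, and finally between $u$ and affine images of the potential to deduce \eqref{Th5.1(1)BMS}, exactly as in \cite[Theorem~6.18]{HKM}. This route sidesteps entirely the scale-noninvariance obstacle you flag in your last paragraph: the exponent $\tfrac{1}{q-1}$ arises from Harnack applied to the potential and the comparison of its $(p,q)$-energy with ${\rm cap}_q$, not from balancing the $p$- and $q$-terms in a Caccioppoli estimate.

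Two concrete problems with your proposal. First, your auxiliary minimizer $h$ on $\Omega\cap B_{2t}$ has, by your own description, the same boundary data as $u$ there; since $u$, being a global $(p,q)$-minimizer on $\Omega$, restricts to a minimizer on $\Omega\cap B_{2t}$, Corollary~\ref{unicitycor} forces $h=u$ and the step is vacuous. The correct barrier is not another solution with the \emph{same} data but a potential (or an affine function of one) with \emph{constant} data on the obstacle set $\overline{B_t}\setminus\Omega$, which is precisely what the paper uses. Second, you yourself identify the single-scale estimate as ``where the real work lies'' and do not supply it; the Maz'ya--De~Giorgi optimisation you sketch runs, as you note, into a spurious $t^{q-p}$ factor, and you offer no mechanism to eliminate it beyond allowing $C_*$ to depend on ${\rm osc}(w,\partial\Omega)$. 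The paper's resolution is to abandon this route altogether in favour of the Harnack-based potential estimate of Lemma~\ref{Lemma5.7BMS}, and that is the missing idea in your plan.
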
 We will obtain the proof of Theorem \ref{Theorem 5.1BMS} as a consequence of the next lemmata, among which there is the comparison principle for $(p,q)$-minimizers (see Lemma \ref{Theorem 6.4Sh}). These results were obtained by adapting the proof tecniques used in \cite{Sh}.
In particular, for the proof of the first lemma we refer the reader to Lemma 6.5 of \cite{Sh}.
\begin{lemma}\label{Lemma6.5}
	Let $\Omega \subset X$. If $u$ and $v$ are two functions in $N^{1,q}(\Omega)$ and $E$ is a subset of $\Omega$ such that $u \leq v$ $q$-q.e. on $E$ and $u>v$ $q$-q.e. in $\Omega \setminus E$, then
	$w=u \chi_E + v \chi_{\Omega \setminus E}$ is also in $N^{1,q}(\Omega)$.
\end{lemma}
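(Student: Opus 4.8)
The plan is to verify directly that the pasted function $w = u\chi_E + v\chi_{\Omega\setminus E}$ lies in $N^{1,q}(\Omega)$ by producing an explicit $q$-weak upper gradient for it that is in $L^q(\Omega)$, and to check $w\in L^q(\Omega)$. Membership in $L^q$ is immediate since $|w| \le |u| + |v|$ pointwise and $u,v\in L^q(\Omega)$. For the gradient, the natural candidate is $g = g_u\chi_{\bar E} + g_v\chi_{\Omega\setminus E}$ (or more safely $g_u\chi_E + g_v\chi_{\Omega\setminus E}$ together with the minimal weak upper gradients of $u$ and $v$ on the ``interface''), which is clearly in $L^q(\Omega)$ since $g_u, g_v \in L^q(\Omega)$. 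The content of the lemma is that $g$ is a $q$-weak upper gradient of $w$, i.e. that the upper gradient inequality $|w(x)-w(y)| \le \int_\gamma g\, ds$ holds along $q$-a.e. curve $\gamma$.

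First I would recall the standard ``gluing'' principle for Newtonian functions on metric spaces: if $u = v$ $q$-q.e. on a Borel set $F$, or more generally if two Newtonian functions agree on a set, their minimal $q$-weak upper gradients agree a.e. there; and, crucially, that along $q$-a.e. curve $\gamma$ a Newtonian function is absolutely continuous (this is Proposition/Theorem material from \cite{BB}, e.g. the characterization of $N^{1,q}$ via absolute continuity on $q$-a.e. curve). The key observation exploited here is the hypothesis $u \le v$ $q$-q.e. on $E$ and $u > v$ $q$-q.e. on $\Omega\setminus E$: on the set $\{u = v\}$, which up to a set of zero capacity contains the topological ``boundary between $E$ and $\Omega\setminus E$'' as seen by the two functions, $w$ transitions continuously from the $u$-branch to the $v$-branch. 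Concretely, since $q$-q.e. point is avoided by $q$-a.e. curve, for $q$-a.e. curve $\gamma$ both $u\circ\gamma$ and $v\circ\gamma$ are absolutely continuous and the inequalities $u\le v$ on $E\cap\gamma$, $u>v$ on $(\Omega\setminus E)\cap\gamma$ hold everywhere along $\gamma$; hence $w\circ\gamma = \min\{u\circ\gamma,\ \text{appropriate branch}\}$... — more precisely $w\circ\gamma$ coincides with $u\circ\gamma$ on $\gamma^{-1}(E)$ and with $v\circ\gamma$ on $\gamma^{-1}(\Omega\setminus E)$, and at any parameter $t_0$ where $\gamma$ crosses between the two sets one has $u(\gamma(t_0)) = v(\gamma(t_0))$ by continuity of $u\circ\gamma, v\circ\gamma$ together with the strict/non-strict inequality split, so $w\circ\gamma$ is continuous, indeed absolutely continuous with $|(w\circ\gamma)'| \le (g_u\chi_E + g_v\chi_{\Omega\setminus E})(\gamma(t))|\gamma'(t)|$ a.e. Integrating gives the upper gradient inequality for $g$ along $q$-a.e. $\gamma$.

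Thus I would structure the argument as: (1) reduce to showing $g := g_u\chi_E + g_v\chi_{\Omega\setminus E} \in L^q(\Omega)$ is a $q$-weak upper gradient of $w$; (2) fix the exceptional family of zero $q$-modulus outside which $u\circ\gamma$ and $v\circ\gamma$ are absolutely continuous, $u,v$ are finite, and $\gamma$ misses the capacity-zero sets on which $u\le v$ on $E$ or $u>v$ on $\Omega\setminus E$ might fail; (3) on the remaining curves show $w\circ\gamma$ is absolutely continuous with the claimed derivative bound, using continuity of $u\circ\gamma, v\circ\gamma$ to force $u=v$ at crossing parameters; (4) integrate and conclude $w\in V^{1,q}(\Omega)$, combine with $w\in L^q(\Omega)$. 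The main obstacle is step (3): handling the set of crossing parameters — a priori $\gamma^{-1}(E)$ need not be open, so the ``crossings'' can form a complicated (even Cantor-like) closed set — and verifying that $w\circ\gamma$ is genuinely absolutely continuous there, not merely continuous. The clean way to dispatch this is to note $w\circ\gamma = \max\{u\circ\gamma,\,\min\{v\circ\gamma,\,u\circ\gamma\}\}$-type lattice expression is not quite right; rather, since on $\gamma^{-1}(E)$ we have $u\circ\gamma\le v\circ\gamma$ and elsewhere $u\circ\gamma> v\circ\gamma$, one checks $w\circ\gamma$ agrees with $u\circ\gamma$ precisely where $u\circ\gamma\le v\circ\gamma$ along $\gamma$ after discarding a null curve-family, so $w\circ\gamma = \min\{u\circ\gamma, v\circ\gamma\}$ up to the same null family — and $\min$ of two absolutely continuous functions is absolutely continuous with derivative bounded by the derivative of whichever is active, which is at most $(g_u\chi_E + g_v\chi_{\Omega\setminus E})(\gamma)|\gamma'|$ a.e. Since $\min\{u,v\}\in N^{1,q}$ with minimal weak upper gradient $\le g_u\chi_{\{u\le v\}} + g_v\chi_{\{v<u\}}$ is itself a standard fact in \cite{BB}, the lemma follows; indeed this is essentially why \cite{Sh} can cite it directly, and for full details one may simply refer to Lemma 6.5 of \cite{Sh}.
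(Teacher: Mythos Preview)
Your proposal is correct, and your closing observation is the whole point: under the hypotheses, $w = \min\{u,v\}$ holds $q$-q.e.\ (on $E$ we have $u\le v$ so $w=u=\min\{u,v\}$; on $\Omega\setminus E$ we have $u>v$ so $w=v=\min\{u,v\}$), and membership of $\min\{u,v\}$ in $N^{1,q}(\Omega)$ with $q$-weak upper gradient $g_u\chi_{\{u\le v\}}+g_v\chi_{\{v<u\}}$ is standard (\cite{BB}). The paper gives no independent argument at all --- it simply refers the reader to Lemma~6.5 of \cite{Sh} --- so there is nothing further to compare; your sketch in fact supplies more than the paper does, and the meandering curve-level discussion preceding your final $\min$ identification can be dropped entirely.
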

Now, we prove the comparison principle for $(p,q)$-minimizers. From now on, we will denote with $N^{1,q}_0 (E)+w $ the set of all functions of the form $v + w$
with $v \in  N^{1,q}_0 (E)$.
Following the lines of Theorem 6.4 of  \cite{Sh}.
\begin{lemma}[Comparison principle]\label{Theorem 6.4Sh}
	Let $\Omega\subset X$. We consider $V \subset \Omega' \subset \overline{\Omega'} \subset  \Omega$ and $u_1, u_2\in N^{1,q}({\Omega})$  $(p,q)$-minimizers.  If $v \leq  u_1$
	$q$-q.e. in $\overline{\Omega'} \setminus V$, then $u_2 \leq u_1$ $q$-q.e. in $V$.
\end{lemma}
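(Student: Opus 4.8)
The plan is to run the classical max/min competitor argument and then feed its outcome into the strict–convexity and truncation machinery already developed for Theorem~\ref{uniqueness} and Corollary~\ref{unicitycor}. Using the hypothesis $u_{2}\le u_{1}$ $q$-q.e.\ on $\overline{\Omega'}\setminus V$, set $\phi:=(u_{2}-u_{1})_{+}$, $v_{1}:=\min\{u_{1},u_{2}\}=u_{2}-\phi$ and $v_{2}:=\max\{u_{1},u_{2}\}=u_{1}+\phi$. The first step is to observe that $\phi\in N^{1,q}(\Omega)$ (Newtonian functions form a lattice; see also Lemma~\ref{Lemma6.5}), that $\phi=0$ $q$-q.e.\ on $\overline{\Omega'}\setminus V\supseteq\Omega'\setminus V$, and that, by the standard characterisation of functions with zero boundary values (see \cite{BB}), this forces $\phi|_{V}\in N^{1,q}_{0}(V)$. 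Since $\overline V\subseteq\overline{\Omega'}\subseteq\Omega$, the open set $V$ is an admissible test domain for both $(p,q)$-minimizers $u_{1}$ and $u_{2}$, and $v_{2}|_{V}$ (resp.\ $v_{1}|_{V}$) is an admissible competitor for $u_{1}$ (resp.\ $u_{2}$) over $V$, because $v_{2}-u_{1}=\phi$ and $u_{2}-v_{1}=\phi$ lie in $N^{1,q}_{0}(V)$. Hence minimality gives
$$\int_{V}(ag_{u_{1}}^{p}+bg_{u_{1}}^{q})\,d\mu\le\int_{V}(ag_{v_{2}}^{p}+bg_{v_{2}}^{q})\,d\mu\quad\text{and}\quad\int_{V}(ag_{u_{2}}^{p}+bg_{u_{2}}^{q})\,d\mu\le\int_{V}(ag_{v_{1}}^{p}+bg_{v_{1}}^{q})\,d\mu.$$
By the locality of minimal $q$-weak upper gradients one has $g_{v_{2}}=g_{u_{1}},\,g_{v_{1}}=g_{u_{2}}$ $\mu$-a.e.\ on $\{u_{1}\ge u_{2}\}$ and $g_{v_{2}}=g_{u_{2}},\,g_{v_{1}}=g_{u_{1}}$ $\mu$-a.e.\ on $\{u_{1}<u_{2}\}$, so $g_{v_{1}}^{l}+g_{v_{2}}^{l}=g_{u_{1}}^{l}+g_{u_{2}}^{l}$ $\mu$-a.e.\ for $l=p,q$; adding the two inequalities therefore forces equality in each of them, in particular $\int_{V}(ag_{u_{1}}^{p}+bg_{u_{1}}^{q})\,d\mu=\int_{V}(ag_{v_{2}}^{p}+bg_{v_{2}}^{q})\,d\mu$.

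From here I would argue exactly as in Theorem~\ref{uniqueness} and Corollary~\ref{unicitycor}. Testing $u_{1}$ against $w:=\tfrac12(u_{1}+v_{2})$, which is admissible over $V$ since $w-u_{1}=\tfrac12\phi\in N^{1,q}_{0}(V)$, and using the uniform convexity of $t\mapsto t^{l}$, the bound $g_{w}\le\tfrac12(g_{u_{1}}+g_{v_{2}})$ and $a,b\ge\alpha>0$, the equality just obtained forces $g_{u_{1}}=g_{v_{2}}$ $\mu$-a.e.\ on $V$. Then the truncation argument of Corollary~\ref{unicitycor}, applied to $\max\{u_{1},\min\{v_{2},c\}\}$ for $c\in\mathbb{Q}$ — which uses only minimality of $u_{1}$, the identity $g_{u_{1}}=g_{v_{2}}$ and $a,b>0$ — yields $g_{u_{1}}=g_{v_{2}}=0$ $\mu$-a.e.\ on each set $\{u_{1}<c<v_{2}\}\cap V$, and hence $g_{\phi}=g_{v_{2}-u_{1}}=0$ $\mu$-a.e.\ on $V$. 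Since $\phi\in N^{1,q}_{0}(V)$ and $V$ is bounded, the Sobolev inequality for functions with zero boundary values (Corollary~\ref{RemarkPoincare}) gives $\phi=0$ $\mu$-a.e.\ on $V$, i.e.\ $u_{2}\le v_{2}=u_{1}$ $\mu$-a.e.\ on $V$; and since $\phi$ is quasicontinuous and vanishes $\mu$-a.e., Proposition~1.59 of \cite{BB} (as in Remark~\ref{quasiunicity}) upgrades this to $\phi=0$ $q$-q.e., that is $u_{2}\le u_{1}$ $q$-q.e.\ in $V$.

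The main obstacle I expect is the first step: correctly extracting $(u_{2}-u_{1})_{+}\in N^{1,q}_{0}(V)$ from the merely $q$-q.e.\ boundary inequality on $\overline{\Omega'}\setminus V$, which is precisely what singles out $V$, rather than $\Omega'$, as the domain on which the comparison must be carried out — together with the careful bookkeeping in the step "equality of energies implies $\mu$-a.e.\ equality of the minimal upper gradients". The remaining computations merely transcribe those of Theorem~\ref{uniqueness} and Corollary~\ref{unicitycor}, so I do not expect them to present difficulties.
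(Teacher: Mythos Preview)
Your argument is correct and follows the same max/min competitor strategy as the paper. The only difference is in the endgame: once you obtain $\int_V(ag_{v_2}^p+bg_{v_2}^q)\,d\mu = \int_V(ag_{u_1}^p+bg_{u_1}^q)\,d\mu$, the paper simply observes that $v_2$ is then itself a $(p,q)$-minimizer with $v_2-u_1\in N_0^{1,q}$ and invokes Remark~\ref{quasiunicity} to conclude $v_2=u_1$ $q$-q.e.\ directly, so there is no need to re-run the strict-convexity and truncation steps of Theorem~\ref{uniqueness} and Corollary~\ref{unicitycor}.
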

\begin{proof}
	Using Remark \ref{quasiunicity}, we get that $(p,q)$-minimizers on $\Omega$ are unique up to sets of zero $q$-capacity. We consider the following set $E=\{x \in V : u_2(x) > u_1(x)\}$. We shall prove that $\mu(E)=0$, that is the set $E$ has measure zero.
	Now, we define $\Gamma_0$ as the class of all compact rectifiable paths on which at least
	one of $u_1, u_2$ is not absolutely continuous or at least one of the function weak
	upper gradient pairs $(u_1, g_{u_1})$, $(u_2, g_{u_2})$ does not satisfy inequality \eqref{ug}. The $q$-modulus of $\Gamma_0$ is zero.\\
	We define a function $v$ in $V$ in the following way
	$$v=
	\begin{cases}
		u_1 \quad \mbox{in $\Omega \setminus E$,}\\
		u_2 \quad \mbox{in $E$.}
	\end{cases}$$
	From the hypothesis, we have that $u_2 \leq u_1$ in $\Omega' \setminus E$ and $u_2>u_1$ in $E$, and from Lemma \ref{Lemma6.5} the
	function $v$ is in $N^{1,q}(\Omega)$. Now, applying Lemma 3.2 and Lemma 3.4 of \cite{Sh}, the function
	$g_0=g_{u_2} \chi_E +g_{u_1} \chi_{\Omega \setminus E}$ is a weak upper gradient of $v$. Therefore, since $u_1$ is a $(p,q)$-minimizer and $v \in N_0^{1,q}(\Omega')+ u_1$, we get
	\begin{align*}\int_E a g_{u_2}^p d\mu + \int_E b g_{u_2}^q d\mu +\int_{\Omega' \setminus E} a g_{u_1}^p d\mu+\int_{\Omega' \setminus E} b g_{u_1}^q d\mu &= \int_{\Omega'}a  g_0^p d\mu+\int_{\Omega'}b g_0^q d\mu\\& \geq \int_{\Omega'} a g_{u_1}^p d\mu+\int_{\Omega'} b g_{u_1}^q d\mu.
	\end{align*}
	thus,
	$$\int_E a g_{u_2}^p d\mu + \int_E b g_{u_2}^q d\mu  \geq \int_E a g_{u_1}^p d\mu + \int_E b g_{u_1}^q d\mu.$$
	We define a function $f$ in $V$ as
	$$f=
	\begin{cases}
		u_1 \quad \mbox{in $E$,}\\
		u_2 \quad \mbox{in $V \setminus E$.}
	\end{cases}$$
	Again using Lemma 3.2 and Lemma 3.4 of \cite{Sh}, function
	$g_1=g_{u_1} \chi_E +g_{u_2} \chi_{V \setminus E}$ is a weak upper gradient of $f$. Therefore, since $u_2$ is a $(p,q)$-minimizer in $V$ and since $f \in N_0^{1,q}(V)+u_2$,
	\begin{align*}\int_Ea  g_{u_1}^p d\mu + \int_E b g_{u_1}^q d\mu +\int_{V \setminus E}a  g_{u_2}^p d\mu+\int_{V \setminus E} b g_{u_2}^q d\mu &= \int_{V}a g_1^p d\mu+\int_{V} b g_1^q d\mu\\& \geq \int_{V} a g_{u_2}^p d\mu+\int_{V} b g_{u_2}^q d\mu.
	\end{align*}
	that is, $$\int_E a g_{u_2}^p d\mu + \int_E b g_{u_2}^q d\mu \leq \int_E a g_{u_1}^p d\mu + \int_E b g_{u_1}^q d\mu.$$ 
	As a consequence, we deduce that $$\int_E a g_{u_2}^p d\mu + \int_E b g_{u_2}^q d\mu=\int_E a g_{u_1}^p d\mu + \int_E b g_{u_1}^q d\mu,$$
	and hence, since $g_v \leq g_0$ almost everywhere,
	$$\int_E ag_v^p d\mu + \int_E bg_v^q d\mu = \int_E ag_{u_1}^p d\mu + \int_E bg_{u_1}^q d\mu.$$
	Therefore we get that $v$ is also a $(p,q)$-minimizer in $\Omega$. Hence, by the uniqueness property of $(p,q)$-minimizers (Remark  \ref{quasiunicity}), $v=u_1$ $q$-q.e. in $\Omega$, and so $\mu(E)=0$.
\end{proof}
Motivated by Lemma 5.7 of \cite{BMS}, we get the following lemma.
\begin{lemma}\label{Lemma5.7BMS}
	Let $\Omega \subset X$ be a bounded domain and $x_0 \in \partial \Omega$. Fix $r>0$ and let $u$ be the $(p,q)$-potential $($see Definition \ref{(p,q)-potential}$)$ for $\overline{B(x_0,r)}\setminus \Omega$ with respect to $B(x_0,5r)$. Then for $0<\rho\leq r$ and $x \in B(x_0, \rho)$, we have $$1-u(x) \leq {\rm exp}\left(-C\int_\rho^r \varphi(x_0, X \setminus \Omega, t)^{\frac{1}{q-1}} \dfrac{dt}{t}\right),$$ for some constant $C$ and $\varphi$ defined as in Theorem \ref{Theorem 5.1BMS}.
\end{lemma}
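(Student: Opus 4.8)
The plan is to put $v = 1 - u$ and to derive, by a dyadic iteration over the scales between $\rho$ and $r$, a geometric decay of $\esssup_{B(x_0,t)} v$ with a contraction factor governed by $\varphi(x_0, X\setminus\Omega, t)^{1/(q-1)}$. First I would record the properties of $v$: since $g_v = g_u$ a.e., $v$ is a $(p,q)$-minimizer in $G := B(x_0,5r) \setminus (\overline{B(x_0,r)} \setminus \Omega)$; comparing $u$ with the constants $0$ and $1$ (both $(p,q)$-minimizers) gives $0 \le v \le 1$ $q$-q.e.; and $v = 0$ $q$-q.e. on $\overline{B(x_0,r)} \setminus \Omega$, so that $(v-k)_+$ vanishes $q$-q.e. on $B(x_0,t) \setminus \Omega$ for every $k \ge 0$ and every $t \le r$. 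This last feature is what lets the $q$-capacity of $X\setminus\Omega$ enter the estimates.

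The core is a one-scale contraction: there is $c_0 \in (0,1)$, depending only on the structural constants, such that for all $t$ below a fixed multiple of $r$ (to absorb the dilation factors),
\begin{equation*}
\esssup_{B(x_0, t)} v \;\le\; \bigl(1 - c_0\,\varphi(x_0, X \setminus \Omega, t)^{\frac{1}{q-1}}\bigr)\,\esssup_{B(x_0, 2t)} v .
\end{equation*}
To prove this I would adapt the classical Wiener-type growth estimate for the $q$-Laplacian to the present non-homogeneous functional, combining: the De Giorgi/Caccioppoli inequality of Lemma \ref{lem 4.1}, which remains available near $x_0$ because the competitor $v - \tau(v-k)_+$ differs from $v$ only by a function supported in $\Omega \cap B(x_0,\cdot)$; the capacitary Poincar\'e inequality of Proposition \ref{Proposition3.2}, applied to $(v-k)_+$ with exponent $q$, which after monotonicity of capacity produces the factor ${\rm cap}_q(B(x_0,t)\setminus\Omega, B(x_0,2t))^{-1}$; the sup estimate of Theorem \ref{Theorem 4.2}, which turns the resulting energy/measure bound into a genuine reduction of the supremum; and the comparison principle of Lemma \ref{Theorem 6.4Sh}, which takes over the role played by rescaling of the capacitary potential in the classical proof --- rescaling being unavailable here because $\int a g^p + b g^q$ is not homogeneous. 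Since the potential transitions from $0$ to a positive value across a ball of radius $\sim t$, its gradient is of order $1/t$ and the $q$-term dominates the $p$-term; carrying out the $p$-versus-$q$ case distinction as in the proof of Proposition \ref{prop4.4}, and using the comparability ${\rm cap}_q(B(x_0,t),B(x_0,2t)) \simeq t^{-q}\mu(B(x_0,t))$, leaves exactly the exponent $\frac{1}{q-1}$ on $\varphi$.

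Granting the contraction, I would iterate it over the dyadic scales $t_i = 2^{-i}r$ lying in $[\rho, r]$ and below the threshold of the one-scale estimate; using $\esssup_{B(x_0,r)} v \le 1$ and $\log(1-s) \le -s$, this yields, for $x \in B(x_0,\rho)$,
\begin{equation*}
1 - u(x) = v(x) \;\le\; \prod_i\bigl(1 - c_0\,\varphi(x_0,X\setminus\Omega,t_i)^{\frac{1}{q-1}}\bigr) \;\le\; \exp\Bigl(-c_0 \sum_i \varphi(x_0,X\setminus\Omega,t_i)^{\frac{1}{q-1}}\Bigr),
\end{equation*}
the finitely many top scales being discarded. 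It then remains to compare the dyadic sum with the integral: by the doubling property of the capacity, $\varphi(x_0,X\setminus\Omega,\cdot)$ is comparable on comparable scales, so $\int_{t_{i+1}}^{t_i}\varphi(x_0,X\setminus\Omega,t)^{1/(q-1)}\frac{dt}{t} \le C\,\varphi(x_0,X\setminus\Omega,t_i)^{1/(q-1)}$; summing gives $\int_\rho^r \varphi(x_0,X\setminus\Omega,t)^{1/(q-1)}\frac{dt}{t} \le C\sum_i \varphi(x_0,X\setminus\Omega,t_i)^{1/(q-1)} + C$, the additive constant covering the discarded top scales. Absorbing the constants into $C$, and using the trivial bound $v \le 1$ for the range of $\rho$ on which the integral is small, gives the assertion, by the same device as in the last step of the proof of Theorem \ref{Theorem2.11}.

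I expect the one-scale contraction to be the main obstacle: it is where the non-homogeneity of the $(p,q)$-integral forces the use of the comparison principle in place of a rescaling argument, and it requires both a careful accounting of the competition between the $p$- and $q$-terms and the verification that the Caccioppoli and capacitary Poincar\'e inequalities are applicable at the boundary point $x_0$.
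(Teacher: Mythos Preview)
Your outline and the paper's proof agree on the iteration-and-sum-to-integral skeleton, but diverge at the decisive step, and that step is where your argument is incomplete. The paper does \emph{not} attempt to extract the contraction
\[
\esssup_{B(x_0,t)} (1-u)\ \le\ \bigl(1-c_0\,\varphi(x_0,X\setminus\Omega,t)^{\frac{1}{q-1}}\bigr)\esssup_{B(x_0,2t)}(1-u)
\]
from the De Giorgi machinery. Instead, at every scale it introduces a \emph{fresh} auxiliary $(p,q)$-potential $u_j$ for $\overline{B}_j\setminus\Omega$ with respect to $B_{j-1}$ (with $B_j=5^{-j}B(x_0,5r)$) and invokes the capacitary lower bound of Lemma~5.6 in \cite{BMS} to get $u_j\ge C\varphi_j^{1/(q-1)}$ on $\overline{B}_j$. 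It is this pointwise lower bound for the scale-$j$ potential --- not any Caccioppoli/capacitary-Poincar\'e/sup-estimate computation --- that produces the exponent $\tfrac{1}{q-1}$. The comparison principle is then applied between $1-u_{j+1}$ and a constant multiple of $1-u$ to push the decay from one scale to the next, after which the resulting product is converted to an integral via Lemma~5.5 of \cite{BMS}.

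The gap in your plan is exactly the one-scale contraction. Running Lemma~\ref{lem 4.1}, Proposition~\ref{Proposition3.2} and Theorem~\ref{Theorem 4.2} as in Proposition~\ref{prop4.4} does give a boundary decay, but with a factor of the shape $(1-2^{-n(r)-1})$, $n(r)\simeq C\gamma(s,r/2)^{p/(p-s)}$, which is not the $(1-c_0\varphi^{1/(q-1)})$ you claim; the De Giorgi route does not single out $\tfrac{1}{q-1}$. Your appeal to the comparison principle ``in place of rescaling'' is precisely where a concrete comparison object is needed, and you never name one: you do not say what function you compare $v=1-u$ against. Without introducing the auxiliary potentials $u_j$ together with their lower bound, there is nothing to compare with that would yield the required factor. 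In short, the missing ingredient is the family $\{u_j\}$ and the estimate $u_j\ge C\varphi_j^{1/(q-1)}$ imported from \cite{BMS}; the toolbox you list is not a substitute for it here.
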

\begin{proof}
	Let $B= B(x_0, 5r)$ and $B_j= 5^{-j}B$. Let $u_j$ be a $(p,q)$-potential for $\overline{B}_j\setminus \Omega$ with respect to $B_{j-1}$. We have that $u_1=u$. We define $$\varphi_j =\dfrac{{\rm cap}_q(B_j \setminus \Omega, B_{j-1})}{{\rm cap}_q (B_j, B_{j-1})}.$$
	Using Lemma 5.6 of \cite{BMS} and the standard inequality $1+t\leq e^t$, $t \in \mathbb{R}$, we get
	\begin{equation}\label{23BMS}
		u_j \geq C \varphi_j^{\frac{1}{q-1}} \geq 1-e^{-C \varphi_j^{\frac{1}{q-1}}}, \quad \mbox{in $\overline{B}_j$.}
	\end{equation}
Now, for $j \in \mathbb{N}$, we recursively define 
\begin{equation*}
v_j=\begin{cases}
1-u_1  \hspace{6.26cm} \mbox{if $j=1$,}\\
{\rm exp}\left(C \varphi_{j-1}^{\frac{1}{q-1}}v_{j-1}\right)= {\rm exp}\left(C\sum_{k=1}^{j-1} \varphi_k^{\frac{1}{q-1}}\right)v_1 \quad \mbox{if $j \geq2$}.
	\end{cases}
\end{equation*}
We prove now by induction that for all $j \in \mathbb{N}$, $v_j \leq {\rm exp}\Big(-C \varphi_{j}^{\frac{1}{q-1}}\Big)$ in $\overline{B}_j$.
	Firstly, by \eqref{23BMS}, we obtain $v_1 \leq {\rm exp}\Big(-C \varphi_{1}^{\frac{1}{q-1}}\Big)$ in $\overline{B}_1$. Assuming that $v_j \leq {\rm exp}\Big(-C \varphi_{j}^{\frac{1}{q-1}}\Big)$ in $\overline{B}_j$, then, by the definition of $v_{j+1}$, we get $v_{j+1}\leq 1$ in $\overline{B}_j$.  Since $v_1= 0$ in $\overline{B}_1\setminus \Omega \supset \overline{B}_j\setminus \Omega$, we then obtain that $v_{j+1}= 0$ in $\overline{B}_j\setminus \Omega$ and $1-u_{j+1}=1$ outside $B_j$. 
	We notice that $0 \leq u_{j+1}, v_{j+1} \leq 1$, thus $v_{j+1}\leq  1 - u_{j+1}$ on the boundary $\partial (B_j \cap \Omega)$. Now, $u_{j+1}$ and $v_{j+1}$ are $(p,q)$-minimizers in $B_j \cap \Omega$, as a consequence, the comparison principle Lemma \ref{Theorem 6.4Sh}
	and \eqref{23BMS} imply that $v_{j+1} \leq 1 - u_{j+1} \leq  {\rm exp}\Big(-C \varphi_{j+1}^{\frac{1}{q-1}}\Big)$ in $\overline{B}_{j+1}$. This concludes the induction step. From the definition of $v_j$, we get
	\begin{equation}\label{25BMS}		
		1-u=1-u_1=v_1\leq {\rm exp}\left(-C\sum_{k=1}^{j} \varphi_k^{\frac{1}{q-1}}\right) \quad \mbox{in $\overline{B}_{j}$.}
	\end{equation}
	We choose $\rho > 0$ so that $\rho \leq r$ and an integer $m$ satisfying $5^{-m}r < \rho \leq 5^{1-m}r$. 
	Again, from the definition of $\varphi_j$ and using Lemma 5.5 of \cite{BMS}, we get
	$$\varphi_j \geq C \dfrac{{\rm cap}_q (B(x_0,t) \setminus \Omega, B(x_0,2t))}{{\rm cap}_q(B(x_0,t), B(x_0,2t))}= C \varphi(x_0, X\setminus \Omega,t)$$
	if $5^{-j}r \leq t \leq 5^{1-j}r$. Thus,
	\begin{align*}
		\sum_{j=1}^{m} \varphi_j^{\frac{1}{p-1}} &\geq C \sum_{j=1}^{m} \int_{5^{-j}r}^{ 5^{1-j}r} \varphi(x_0, X\setminus \Omega,t)^{\frac{1}{q-1}} \frac{dt}{t}\\
		& \geq C  \int_{\rho}^{r} \varphi(x_0, X\setminus \Omega,t)^{\frac{1}{q-1}} \frac{dt}{t},
	\end{align*}
	and, by \eqref{25BMS}, we conclude the proof.
\end{proof}

Theorem \ref{Theorem 5.1BMS} now follows from Lemma \ref{Lemma5.7BMS} and the comparison principle
Lemma \ref{Theorem 6.4Sh} in exactly the same way as Theorem 6.18 in \cite{HKM} (see also \cite{BMS}).

\end{document}